\documentclass[11pt]{article}

 \usepackage{amscd,amsmath,amsfonts,amssymb,amsthm,cite,mathrsfs,color} 
\usepackage{dsfont} \usepackage{leftidx}
\usepackage{tikz} \usepackage{enumerate}  
 \usepackage{appendix}

 \usepackage[left=2.5cm, right=2.5cm, top=2.5cm, bottom=2.5cm]{geometry}
\usepackage{graphicx} \usepackage{xcolor}
\usepackage[colorlinks,linkcolor=red]{hyperref}
\numberwithin{equation}{section} 


\usepackage{amsthm}

\theoremstyle{plain}
\newtheorem{theorem}{Theorem}[section]

\newtheorem{proposition}[theorem]{Proposition}

\theoremstyle{definition}
\newtheorem{definition}[theorem]{Definition}

\newtheorem{remark}[theorem]{Remark}

\begin{document}
\title{\bf{ 
Repeated erfc   statistics for deformed GinUEs
}}

\author{
Dang-Zheng Liu\footnotemark[1] ~ and     Lu Zhang\footnotemark[2]}
\renewcommand{\thefootnote}{\fnsymbol{footnote}}
\footnotetext[1]{CAS Key Laboratory of Wu Wen-Tsun Mathematics, School of Mathematical Sciences, University of Science and Technology of China, Hefei 230026, P.R.~China. E-mail: dzliu@ustc.edu.cn}
\footnotetext[2]{School of Mathematical Sciences, University of Science and Technology of China, Hefei 230026, P.R.~China. E-mail: zl123456@mail.ustc.edu.cn}


 \maketitle
\begin{abstract}
For    an additive perturbation  of  the complex  Ginibre ensemble   under  a deterministic   matrix $X_0$,   under certain assumption on   $X_0$, 
we  observe  that there are only two kinds of local statistical
patterns at the spectral edge: GinUE statistics and critical statistics, which corresponds to  
regular    or quadratic vanishing spectral points.
 As a continuation of our previous study on critical statistics \cite{LZ23},  in this paper we establish the local statistics of GinUE type at the regular spectral edge, which is characterized by a repeated erfc integral found in \cite{LZ22}.   

 \end{abstract}

\section{Introduction and main results}

\subsection{Introduction}

We  begin our study by specifying the deformed complex Ginibre ensembles (GinUE for short)
 \begin{equation} \label{defmat}X:=X_0 +\sqrt{\frac{\tau}{N}}G,  \end{equation}
where $X_0$   is a deterministic  matrix, and    $G=[g_{ij}]_{i,j=1}^N$ is a random  matrix  whose entries   are i.i.d.  complex normal   variables with mean 0 and variance 1. Or, to be more exact,  we have 
 \begin{definition}  \label{GinU} 
A random  complex  
$N\times N$ matrix  $X$,    is said to belong to the deformed complex  Ginibre ensemble with mean $X_0={\rm diag}\left( A_0,0_{N-r} \right)$ and  time $\tau>0$,  where $A_0$ is a $r\times r$ constant  matrix,  denoted by GinUE$_{N}(A_0)$,  if  the joint probability density function for  matrix entries  is given by   
\begin{equation}\label{model}
P_{N}(A_0;X)=    \Big(\frac{N}{\pi\tau}\Big)^{N^2}\
e^{ -\frac{N}{\tau} {\rm Tr} (X-X_0)(X-X_0)^*}.
\end{equation}

\end{definition}

One  motivation behind the study of the deformed model    \eqref{defmat}   comes mostly
from the general effort toward the understanding of the effect of a perturbation  
on the spectrum of a large-dimensional random matrix; see \cite{BC16} and \cite{LZ22}  for  a more detailed explanation.  This  is a continuation of our previous paper   \cite{LZ23}, where 
a kind of critical statistics has been observed at quadratic vanishing spectral edge  points, and  now focuses on local statistics at the   regular  spectral edge points. 

%


  In the non-perturbative  case,  
 the study of  non-Hermitian random matrices  dates back to the work of    Ginibre  \cite{Gi}  for  i.i.d. real/complex   Gaussian random matrices, and then was extended to  i.i.d.  case.   At a macroscopic  level,     the limit spectral   measure was     the famous \textit{circular law} after  a long list of works   \cite{Ba,Gir,GT,PZ,TV10}; see an excellent  survey \cite{BC12} and references therein. 
  However, at  a   microscopic level,     local  eigenvalue statistics  in the bulk and at the  soft edge  of the spectrum were  revealed    first  for the   Ginibre ensembles   \cite{BS, FH,Ka},  with the help of  exact eigenvalue  correlations.    
  Later  in \cite{TV15},  Tao and Vu  established  a four moment match theorem  in the bulk and at the  soft edge, 
both in the real and complex cases.    Recently,   the four moment
matching condition  has been  removed, so      the edge and bulk  universality have been  proved  for any   random matrix with  i.i.d.  entries,  respectively by 
    Cipolloni,  Erd\H{o}s and  Schr\H{o}der   \cite{CES}   and by Maltsev and Osman  \cite{MO23,Os}. 

 For the deformed model  \eqref{defmat},    the  circular law still holds  under a  small low rank perturbation from $X_0$. see e.g.   \cite[Corollary 1.12]{TV10}. 
 Particularly   
 when  the perturbation strength of $X_0$ is above a certain threshold (supercritical regime),  such a perturbation can  create outliers  outside the unit disk  \cite{Ta13}.  Since then,  this outlier phenomenon has been  extensively  studied  in \cite{BC16,BZ,BR,COW,OR}.  Furthermore,  the fluctuations of  outlier eigenvalues are  investigated respectively by 
 Benaych-Georges and Rochet   for   deformed invariantly matrix ensembles \cite{BR},  and    by Bordenave and Capitaine for deformed i.i.d. random matrices \cite{BC16}.   These fluctuations,  due to non-Hermitian structure,  become much more complicated, and indeed  highly depend on the shape of the Jordan canonical form of the perturbation.   Meanwhile, when  the perturbation strength  is near or below   a certain threshold (subcritical and critical  regimes),   the  finite-rank  perturbation effect  has been  investigated  in our previous paper \cite{LZ22}and  it has been proved that 
 the edge statistics  forms      a  determinantal point processes  with   correlation kernels   characterized  by repeated  integrals of the erfc function,   depending  only on geometric multiplicity of eigenvalue.   
 Together,  all  these  results  establish     a non-Hermitian analogue of the BBP phase  transition for the largest eigenvalue of spiked random matrices \cite{BBP}.

 Our main goal is to   identify all the possible local  eigenvalue statistics at the edge and in the bulk  of the spectrum, for the deformed  model   GinUE$_{N}(A_0)$ defined in \ref{defmat}    under certain restriction \eqref{A0 form}  on $A_0$. 
Related to  this is the limit  spectral measure of the deformed model. Its characterization can be given by  using tools from free probability,    the Brown measure of free circular Brownian motion  $x_0 + \sqrt{\tau}c$, where  $c$  is a  a circular variable,   freely independent of 
a general  operator  $x_0$.    The Brown measure and corresponding density formula has been extensively studied in   \cite{BYZ, BCC13,  EJ,  HZ23, Zh21}.  The exact density  will  play a crucial   role  in the study of local eigenvalue statistics for non-Hermitian random matrices.   Recently,  \cite{EJ}  Erd\'{o}s and Ji  further  observe  a remarkable phenomenon:     
{\textit{“The density of the Brown measure has one of the following two types of behavior
around each point on the boundary of its support-either (i) sharp cut,   or (ii) quadratic decay at certain critical points on the boundary’’}}. This dichotomy can also be observed in the deformed model  \eqref{defmat}  when $X_0$ is  normal under certain restriction   on $A_0$.
 In the previous paper \cite{LZ23} we have proved  that the critical edge point belongs to the quadratic edge and  in this paper we are devoted  to  the regular (non-critical) edge point belongs to the sharp edge.

\subsection{Main results}

Throughout the present paper, we select $A_0$ as a normal matrix with the following form
\begin{equation}\label{A0 form}
A_0={\rm diag}\left(a_1\mathbb{I}_{r_1},\cdots,a_t\mathbb{I}_{r_t},
z_0\mathbb{I}_{r_0},A_{t+1}
\right),
\end{equation} where $t$ is a given non-negative integer,  $z_0$ is  a spectral parameter and   $A_{t+1}$ is a $r_{t+1}\times r_{t+1}$ normal matrix with $z_0$  not an  eigenvalue.
 Put $R_0=N-r$ where  $r=\sum_{\alpha=0}^{t+1} r_{\alpha}$, we assume that    
 $r_0$, $r_{t+1}$ and $R_0$ are  finite, independent of $N$, and  as $N\to \infty$
 \begin{equation}\label{ralpha N}
r_{\alpha}=c_{\alpha}N+R_{\alpha,N},
\quad R_{\alpha,N}=O(1), \quad \alpha=1,\cdots,t.
\end{equation}
  Obviously, 
\begin{equation}\label{calpha sum}
\sum_{\alpha=1}^t  c_{\alpha}=1.
\end{equation}

For given complex numbers $a_1,\cdots,a_t$, introduce a probability measure on the complex plane 
\begin{equation}\label{nu}
d\nu(z)=\sum_{\alpha=1}^t c_{\alpha} \delta(z-a_{\alpha}),
\end{equation}
and 
several relevant  expectations 
\begin{equation}\label{parameter}
P_{00}(z_0):= \int  \frac{1}{|z-z_0|^2} d\nu(z),   \quad P_0:=P_{0}(z_0)=  \int  \frac{z-z_0}{|z-z_0|^4} d\nu(z),
 \end{equation}
and 
\begin{equation}\label{parameter2}
  P_1:=P_{1}(z_0)=  \int  \frac{1}{|z-z_0|^4} d\nu(z).
\end{equation}
These  quantities  are  crucial  to characterize   exact functional form  of  the  limit spectral measure and  to distinguish spectral   properties.  For instances, 
the support of limit spectral measure $\mu_{\infty}$ is
\begin{equation}\label{Support}
\mathrm{Supp}(\mu_{\infty}):=\Big\{z_0\in \mathbb{C}: P_{00}(z_0)\geq \frac{1}{\tau} \Big\};
\end{equation}
with  the boundary curve  characterized by the equation $P_{00}(z_0)= =1/\tau$;   see e.g.  \cite[Proposition 1.2]{BC16}.  On the other hand,   any point on the boundary curve that  is critical  (quadratic) or  not (regular)  edge  the boundary points    is   completely  characterized by   the relation $P_{0}(z_0) \neq 0$ or not.

We also need to define a function,   depending on a real parameter  $n>-1$, by
\begin{equation} \label{IEF}
\mathrm{IE}_{n}(z)= \frac{1}{\sqrt{2\pi} \Gamma(n+1)}
\int_0^{\infty}v^n 
e^{  
-\frac{1}{2}(
v+z)^2}{\rm d}v.
\end{equation}
while  as a limit  of    $n>-1$ from  above 
\begin{equation*} 
\mathrm{IE}_{-1}(z)= \frac{1}{\sqrt{2\pi}  }
e^{  -\frac{1}{2}z^2}.
\end{equation*}
At the regular edge point of the spectrum, a kernel emerges and has been introduced in  \cite{LZ22}  via \begin{equation} \label{correkernel}
K_{n}\left(z,w \right)=\sqrt{\frac{2}{\pi}}\Gamma(n+1) e^{ \frac{1}{2}(z + \overline{ w})^2} \sqrt{
\mathrm{IE}_{n-1}( - z - \overline{z})
\mathrm{IE}_{n-1}( -w- \overline{w})
 }  \,\mathrm{IE}_{n}\big({z}+\overline{w}\big).
\end{equation}
In the special case of $n=0$, $\mathrm{IE}_{0}(z)$ reduces to    the erfc function and the above kernel is   none other than the well-known GinUE edge kernel.    

Now, we are ready to  formulate  the  main result  concerning edge   limits of   
the $n$-point correlation functions  of eigenvalues, denoted by  $R_N^{(n)}(
A_0; z_1,\ldots,z_n)$  as shown in \cite{LZ22}, for the $ {\mathrm{GinUE}}_{N}(A_0)$ ensemble.


\begin{theorem}\label{2-complex-correlation}
For the $ {\mathrm{GinUE}}_{N}(A_0)$ ensemble with $R_0\geq n$ and the assumptions  
\eqref{A0 form} on $A_0$,
 if  $z_0$ is a regular edge point that satisfies
 \begin{equation}  P_{00}(z_0)=   1/\tau \quad \mathrm{and} \quad P_{0}(z_0) \neq 0,
\end{equation}
 then 
as $N\to \infty$ the scaled     correlation functions \begin{small}
\begin{multline} \label{corre2edgecomplex noncritical}
\Big(\frac{\sqrt{P_1}}{|P_0|\sqrt{N}}\Big)^{2n}
R_N^{(n)}\Big(
A_0;z_0-\frac{\sqrt{P_1}\hat{z}_1}{\overline{P_0}\sqrt{N}}
,\cdots,
z_0-\frac{\sqrt{P_1}\hat{z}_n}{\overline{P_0}\sqrt{N}}
\Big)= 
\det\left(
\big[
K_{\widetilde{r}_0}\left( \hat{z}_i, \hat{z}_j \big)
\right]_{i,j=1}^n
\right)
+O\big( N^{-\frac{1}{4}} \big), 
\end{multline}
\end{small}
 hold uniformly for   all 
$\hat{z}_{1}, \ldots, \hat{z}_{n} $
in a compact subset of $\mathbb{C}$,
where  $\widetilde{r}_0= 
 r_0 +R_0$ if $z_0=0,$  and  $\widetilde{r}_0= 
 r_0$ otherwise. 
\end{theorem}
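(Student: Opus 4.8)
The plan is to start from the exact determinantal structure of the eigenvalue process of $\mathrm{GinUE}_N(A_0)$ established in \cite{LZ22}: the $n$-point correlation functions $R_N^{(n)}(A_0;z_1,\ldots,z_n)$ are given by an $n\times n$ determinant $\det[K_N(z_i,z_j)]$ for an explicit pre-kernel $K_N$, and it suffices to prove convergence of the rescaled kernel
\[
\Big(\tfrac{\sqrt{P_1}}{|P_0|\sqrt{N}}\Big)^{2} K_N\Big(z_0-\tfrac{\sqrt{P_1}\hat z}{\overline{P_0}\sqrt{N}},\ z_0-\tfrac{\sqrt{P_1}\hat w}{\overline{P_0}\sqrt{N}}\Big)\longrightarrow K_{\widetilde r_0}(\hat z,\hat w)
\]
uniformly on compacts, after absorbing the usual cocycle (gauge) factor $e^{\cdots}$ that does not affect the determinant. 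Convergence of all correlation functions then follows since determinants are continuous in the entries, and the error $O(N^{-1/4})$ in each matrix entry propagates to $O(N^{-1/4})$ in the $n\times n$ determinant because $R_0\ge n$ keeps everything in the regime where the kernel has a bona fide limit. So the entire theorem reduces to a one-variable (two-point) asymptotic analysis.

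The heart of the matter is a contour-integral / Laplace-type representation for $K_N$. From \cite{LZ22} the pre-kernel can be written (up to the gauge factor) as a double integral over two circles, or after residue/saddle manipulation as an integral of the form $\int \cdots e^{N f(\zeta)}\,(\text{algebraic factor})\,d\zeta$, where the exponent $f$ encodes $\mathrm{Tr}\log$ of the relevant resolvent of $A_0$ and the algebraic factor carries the finite-rank data $r_0, r_{t+1}, R_0$. The first key step is to insert $z_i=z_0-\sqrt{P_1}\hat z_i/(\overline{P_0}\sqrt N)$ and Taylor-expand $f$ around the relevant critical point. At a \emph{regular} edge point, $P_{00}(z_0)=1/\tau$ forces $f'=0$ while $P_0(z_0)\ne 0$ guarantees $f''\ne 0$; this is exactly the generic (nondegenerate) saddle, in contrast to the critical case of \cite{LZ23} where $f''$ also vanishes and one must go to third order. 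So the scaling exponent $N^{1/2}$ and the precise constants $\sqrt{P_1}/\overline{P_0}$ are dictated by balancing $N f''\,\delta^2$ against $\delta\cdot(\text{linear term from the shift of }z_i)$ with $\delta=O(N^{-1/2})$. After this expansion the bulk of the integral localizes near the saddle and, with the substitution $v=$ (rescaled integration variable), reproduces exactly the integral $\mathrm{IE}_{\widetilde r_0}(\hat z+\overline{\hat w})$ in \eqref{IEF}, together with the square-root prefactor $\sqrt{\mathrm{IE}_{\widetilde r_0-1}(-\hat z-\overline{\hat z})\,\mathrm{IE}_{\widetilde r_0-1}(-\hat w-\overline{\hat w})}$ coming from the diagonal normalization; the index shift $\widetilde r_0=r_0+R_0$ when $z_0=0$ versus $\widetilde r_0=r_0$ otherwise comes from whether the $0_{N-r}$ block of $A_0$ contributes an extra $R_0$ to the power of the algebraic factor at the saddle (when $z_0\ne0$ that block is nonsingular and gets absorbed into the smooth part of $f$).

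The main obstacle I expect is not the formal saddle expansion but making the localization \emph{uniform} and controlling the tail/remainder to the claimed precision $O(N^{-1/4})$: one must (i) show the non-saddle portion of the contour contributes an exponentially small error uniformly for $\hat z,\hat w$ in a compact set, (ii) bound the contribution of the third- and higher-order Taylor terms of $f$ — these give the $N^{-1/4}$ rate, since a cubic term of size $N\cdot(N^{-1/2})^3=N^{-1/2}$ in the exponent, integrated over a window of width $N^{-1/4}$ around the saddle after a further rescaling, is what produces the $N^{-1/4}$ loss — and (iii) handle the finite-rank algebraic factor, whose argument is being evaluated near a point where some of its own factors $|z-z_0|^{-2}$ etc. are $O(N^{-1/2})$-perturbed, so one needs the $r_\alpha=c_\alpha N+O(1)$ hypothesis \eqref{ralpha N} to keep $(1+O(N^{-1/2}))^{c_\alpha N}$ convergent. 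A secondary technical point is justifying that the square-root/Schur-complement form of the kernel in \eqref{correkernel} is the correct self-consistent limit — i.e. that the diagonal entries $K_N(z_i,z_i)$ converge to $K_{\widetilde r_0}(\hat z_i,\hat z_i)$ with the \emph{same} constants — which is a compatibility check that the gauge factor and the prefactors have been tracked correctly. Once these uniform estimates are in place, assembling the $n\times n$ determinant and reading off the stated error is routine.
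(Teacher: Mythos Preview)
Your proposal rests on a premise that does not hold for this model: you assume the finite-$N$ eigenvalue process of $\mathrm{GinUE}_N(A_0)$ is determinantal with an explicit pre-kernel $K_N(z,w)$ admitting a tractable contour-integral representation, so that the whole theorem reduces to a one-variable saddle-point analysis of that kernel. For a \emph{deformed} Ginibre ensemble with general mean $X_0$ no such kernel is available; the additive perturbation destroys the exact determinantal structure of the pure GinUE, and \cite{LZ22} does not furnish one. The paper's actual starting point (Proposition~\ref{foranlysis}, imported from \cite[Proposition~2.2]{LZ23}) is instead a high-dimensional \emph{matrix} integral for $R_N^{(n)}$ itself, over variables $Y\in\mathbb{C}^{n\times n}$, $Q_0\in\mathbb{C}^{n\times r_0}$, $Q_{t+1}\in\mathbb{C}^{n\times r_{t+1}}$ and upper-triangular $T_\alpha\in\mathbb{C}^{n\times n}$ for $\alpha=1,\ldots,t$, obtained by combining an integral representation of the $n$-point function with duality formulas for characteristic polynomials. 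There is no reduction to $n=1$.

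Consequently the real proof is a multivariate Laplace analysis on this matrix integral: one locates the saddle \eqref{maximumpoint}, carries out Taylor expansions of both the exponent $f$ and the pre-exponential $g$ (Propositions~\ref{fTaylor} and~\ref{gTaylor}), introduces several layered changes of variables \eqref{matrix transformations1}--\eqref{matrix transformations2} with \emph{different} typical scales ($N^{-1}$ for $\Gamma_1,G_1$; $N^{-1/2}$ for $\widetilde{T}_{\mathrm{d},\alpha},T_{\mathrm{u},\alpha},Q_{t+1}$; $N^{-1/4}$ for $Q_0,Y$), and then integrates out the various blocks one at a time using Propositions~\ref{matrixintegral1} and~\ref{matrixintegral2}. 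The determinantal structure of the \emph{limit} only appears at the very end, from the residual $Y$-integral via the singular value decomposition, the Harish-Chandra--Itzykson--Zuber formula \eqref{HCIZ}, and Andr\'{e}ief's identity \eqref{Cauchybinet}. Your intuition about why $P_0\ne 0$ yields a nondegenerate saddle, why the error is $O(N^{-1/4})$, and why $\widetilde r_0$ shifts by $R_0$ when $z_0=0$ is essentially correct, but it has to be implemented in this matrix-integral framework rather than on a scalar kernel that does not exist here.
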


         \begin{remark}  The  microscopic    limit is completely   governed  by  the GinUE edge statistics    in the absence  of  the finite-rank perturbation effect, so  the combination of the critical statistics  proved in \cite{LZ23} probably shows that there are only two types of  statistics at the edge of the spectrum. 
    Besides, although we  assume that the limit spectral measure   $\nu$   of  the normal matrix $A_0$  takes a special form as in  \eqref{nu},   it is expected that  the edge universality  holds for   the general $\nu$.
\end{remark}

{\bf Structure of this paper.}  Section 2  is devoted to the proof of
Theorem \ref{2-complex-correlation}: $z_0\not=0$, and Section 3 is for the postponed proofs of two propositions and also for the proof of Theorem \ref{2-complex-correlation}: $z_0=0$.

\section{Proof of Theorem \ref{2-complex-correlation}}  \label{proofs}

\subsection{Notation and matrix variables} \label{sectnotation}

  By convention,  the conjugate, transpose,  and  conjugate transpose of a complex matrix $A$  are denoted by  $\overline{A}$,  $A^t$ and $A^*$,  respectively.   Besides, the following symbols may be used in this and subsequent sections.

\begin{itemize}

\item   {\bf Tensor}
 The tensor product, or Kronecker product,  of an $m\times n$  matrix  $A=[a_{i,j}]$  and  a $p\times q$ matrix  $B$ is a block matrix $$
A \otimes B=\left[\begin{smallmatrix}
a_{11}B   &\cdots  &  a_{1n} B   \\ \vdots  & \ddots &   \vdots   \\
a_{m1}B   &\cdots  &  a_{mn} B   
\end{smallmatrix}\right].$$
When $m=n$ and $p=q$,  $A \otimes B$ is similar to  $B \otimes A$ by a permutation matrix.

  \item   {\bf  Norm}
  For a complex matrix $M$,  define the Hilbert-Schmidt norm  as $\|M\|:=\sqrt{{\rm{Tr}}(MM^*)}$.     $M=O(A_N)$ for some (matrix) sequence $A_N$  means that each  element  has the same order as  that     of $ A_N$. 
  
        \item   {\bf Constants} $\gamma_{N}=N/(N-n)$, and $f_{\alpha}=|z_0-a_{\alpha}|^2$ ($\alpha=1,\cdots,t$).

\end{itemize}

In view of \eqref{A0 form}, we need to introduce several matrix variables $Y$, $Q_0$, $Q_{t+1}$ and upper triangular $T_{\alpha}$ with non-negative diagonal elements for $\alpha=1,\cdots,t$, which are of sizes $n\times n$, $n\times r_0$, $n\times r_{t+1}$ and $n\times n$ respectively. Introduce scaled variables
\begin{equation}\label{zi}
Z=z_0 I_{n}+N^{-\frac{1}{2}}\hat{Z}, \quad Z=\mathrm{diag}(z_1, \ldots,z_n), \  \hat{Z}=\mathrm{diag}(\hat{z}_1, \ldots,\hat{z}_n).
\end{equation} 
Meanwhile, for $\alpha=1,\cdots,t$,  setting  block matrices as follows 
\begin{equation}\label{A alpha}
A_{\alpha}=
\begin{bmatrix}
\sqrt{\gamma_N}(Z-a_{\alpha}\mathbb{I}_n)  &
-Y^* \\
Y & \sqrt{\gamma_N}
\big(Z^*-\overline{a}_{\alpha}\mathbb{I}_n
\big)
\end{bmatrix},
\end{equation}
\begin{equation}\label{L1hat}
\widehat{L}_1={\rm diag}\big(
A_1\otimes \mathbb{I}_n,\cdots,
A_t\otimes \mathbb{I}_n,
\widehat{B}_{t+1}
\big),
\end{equation}
\begin{equation}\label{B0hat}
\widehat{B}_{t+1}=
\begin{bmatrix}
\sqrt{\gamma_N}
\big(
Z\otimes \mathbb{I}_{\widetilde{r}_{t+1}}
-\mathbb{I}_n\otimes \widetilde{A}_{t+1}
\big)  &
-Y^*\otimes \mathbb{I}_{\widetilde{r}_{t+1}} \\
 Y\otimes \mathbb{I}_{\widetilde{r}_{t+1}} &  \sqrt{\gamma_N}
\big(
Z^*\otimes \mathbb{I}_{\widetilde{r}_{t+1}}
-\mathbb{I}_n\otimes \widetilde{A}_{t+1}^*
\big)
\end{bmatrix},
\end{equation}
\begin{small}
\begin{equation}\label{L0hat}
\widehat{L}_2=
\left[
\begin{smallmatrix}
\left[
\left[\begin{smallmatrix}
a_{\alpha}\mathbb{I}_n & \\ & \overline{a}_{\alpha}\mathbb{I}_n
\end{smallmatrix}\right]
\otimes
(T_{\alpha}^*T_{\beta}) \right]_{\alpha,\beta=1}^t
&
\left[
\left[\begin{smallmatrix}
a_{\alpha}\mathbb{I}_n & \\ & \overline{a}_{\alpha}\mathbb{I}_n
\end{smallmatrix}\right]
\otimes
(T_{\alpha}^*\widetilde{Q}_{t+1}) \right]_{\alpha=1}^t
   \\
\left[
\begin{smallmatrix}
\mathbb{I}_n \otimes 
(\widetilde{A}_{t+1}\widetilde{Q}_{t+1}^*
T_{\beta})  &  \\
 &  
\mathbb{I}_n  \otimes 
 (\widetilde{A}_{t+1}^*\widetilde{Q}_{t+1}^*
T_{\beta})
\end{smallmatrix}\right]_{\beta=1}^t
&
\left[\begin{smallmatrix}
\mathbb{I}_n  \otimes 
(\widetilde{A}_{t+1}\widetilde{Q}_{t+1}^*\widetilde{Q}_{t+1})
 &  \\
& 
\mathbb{I}_n  \otimes 
(\widetilde{A}_{t+1}^*\widetilde{Q}_{t+1}^*\widetilde{Q}_{t+1})
\end{smallmatrix}\right]
\end{smallmatrix}\right],
\end{equation}
\end{small}
where
\begin{equation}\label{Q convenience} 
\widetilde{Q}_{t+1}=\left[ Q_0,Q_{t+1}
\right],\quad
\widetilde{A}_{t+1}={\rm diag}\left( z_0\mathbb{I}_{r_0},A_{t+1}
\right),\quad
\widetilde{r}_{t+1}=r_0+r_{t+1},
\end{equation}
whenever $r_0>0$.


 On the other hand, rewrite $T_{\alpha}$ as  sum of  a diagonal matrix $\sqrt{T_{{\rm d},\alpha}}$ and a strictly upper triangular  matrix ${T_{{\rm u},\alpha}}$
\begin{equation}\label{Talpha 2}
T_{\alpha}=\sqrt{T_{{\rm d},\alpha}}+T_{{\rm u},\alpha}:=\begin{bmatrix}
\sqrt{t_{1,1}^{(\alpha)}} & \cdots & t_{i,j}^{(\alpha)} \\
 & \ddots & \vdots \\
 & & \sqrt{t_{n,n}^{(\alpha)}}
\end{bmatrix},
\end{equation} 
where
\begin{equation}\label{Tdalpha}
T_{{\rm d},\alpha}={\rm diag}
\big(
t_{1,1}^{(\alpha)},\cdots,t_{n,n}^{(\alpha)}
\big).
\end{equation}

\subsection{Concentration reduction} \label{sectsketch}

Combining integral representations for the $n$-point correlations given in our previous paper \cite[Proposition 1.3]{LZ22} and duality formulas for auto-correlation functions of characteristic polynomials in \cite{Gr, LZ22}, we can write the $n$-point correlation function for  the GinUE$_{N}(A_0)$ ensemble in Definition \ref{GinU} in an appropriate form which is ready for asymptotic analysis by the Laplace method argument. This has already been done in \cite[Proposition 2.2]{LZ23}, but for convenience we restate it as follows. 
\begin{proposition}\label{foranlysis}
For the $ {\mathrm{GinUE}}_{N}(A_0)$ ensemble with $R_0\geq n$,  with the assumptions  
\eqref{A0 form} on $A_0$ and the above notations,  
 the $n$-point correlation function  can be written as \begin{equation}
\begin{aligned} \label{intrep}
R_N^{(n)}(A_0;z_1,\cdots,z_n)=
\widetilde{C}_{N,n}
\int \cdots \int g(T,Y,Q)\exp\{ Nf(T,Y,Q) \}
{\rm d}Y
{\rm d}Q_0{\rm d}Q_{t+1} \prod_{\alpha=1}^t
{\rm d}T_{\alpha},
\end{aligned}
\end{equation}
where 
\begin{small}
\begin{equation}\label{gYUT}
\begin{aligned}
g(T,Y,Q)=& \Big(\det\big(
\mathbb{I}_n-\sum_{\alpha=1}^t T_{\alpha}T_{\alpha}^*
-Q_0Q_0^*-Q_{t+1}Q_{t+1}^*
\big)\Big)^{R_0-n}  \Bigg(
\det\!\begin{bmatrix}
\sqrt{\frac{N}{N-n}}Z & -Y^*   \\
Y  &   \sqrt{\frac{N}{N-n}}Z^*
\end{bmatrix}
\Bigg)^{R_0-n}
\\
&\times
\Big(\prod_{\alpha=1}^t\prod_{j=1}^n
\big(
t_{j,j}^{(\alpha)}
\big)^{R_{\alpha,N}-j}\Big)
\Big(\prod_{\alpha=1}^t
\big(
\det(A_{\alpha})
\big)^{R_{\alpha,N}-n} \Big) \det\!\bigg(
\widehat{L}_1+\sqrt{\frac{N}{N-n}}\widehat{L}_2
\bigg),
\end{aligned}
\end{equation}
\end{small}
and
\begin{small}
\begin{equation}\label{fTY}
\begin{aligned}
&f(T,Y,Q)=\sum_{\alpha=1}^t
c_{\alpha}\log\det(T_{\alpha}T_{\alpha}^*) 
+\frac{1}{\tau}h(T,Q)
-
\frac{N-n}{\tau N}
{\rm Tr}\big(
YY^*
\big)+\sum_{\alpha=1}^tc_{\alpha}\log\det(A_{\alpha}),
\end{aligned}
\end{equation}
\end{small}
with
\begin{small}
\begin{equation}\label{hQrewrite}
\begin{aligned}
&h(T,Q)=\sum_{\alpha=1}^t\left(-|z_0-a_{\alpha}|^2{\rm Tr}\big(
T_{\alpha}T_{\alpha}^*
\big)
+N^{-\frac{1}{2}}
\left(
\overline{a}_{\alpha}{\rm Tr}\big(
\hat{Z} T_{\alpha}T_{\alpha}^*
\big)+a_{\alpha}{\rm Tr}\big(
\hat{Z}^* T_{\alpha}T_{\alpha}^*
\big)
\right) 
\right)        \\
&+
|z_0|^2\Big(
\sum_{\alpha=1}^t
{\rm Tr}\big(
T_{\alpha}T_{\alpha}^*
\big)+{\rm Tr}\big(
Q_0Q_0^*
\big)+{\rm Tr}\big(
Q_{t+1}Q_{t+1}^*
\big)
\Big)         \\       
&+N^{-\frac{1}{2}}
\Big(
\overline{z}_0{\rm Tr}\big(
\hat{Z} Q_0Q_0^*
\big)+z_0{\rm Tr}\big(
\hat{Z}^* Q_0Q_0^*
\big)
+
{\rm Tr}\big(
\hat{Z} Q_{t+1}A_{t+1}^*Q_{t+1}^*
\big)+{\rm Tr}\big(
\hat{Z}^* Q_{t+1}A_{t+1}Q_{t+1}^*
\big)
\Big)  \\
&+\sum_{1\leq i < j\leq n}
\Big |
\Big(
\sum_{\alpha=1}^t a_{\alpha}T_{\alpha}T_{\alpha}^*
+z_0Q_0Q_0^*+Q_{t+1}A_{t+1}Q_{t+1}^*
\Big)_{i,j}
\Big|^2          \\
&-{\rm Tr}Q_{t+1}
\big(
z_0\mathbb{I}_{r_{t+1}}-A_{t+1}
\big)^*
\big(
z_0\mathbb{I}_{r_{t+1}}-A_{t+1}
\big)
Q_{t+1}^*. 
\end{aligned}
\end{equation}
\end{small}
Here 
\begin{equation} \label{norm-1}
\widetilde{C}_{N,n,\tau}=
\Big( \frac{N-n}{\pi\tau} \Big)^{n^2}
\frac{1}{C_{N,\tau}}e^{-\frac{N}{\tau}\sum_{k=1}^n|z_k|^2}
\prod_{1\leq i <j \leq n}|z_i-z_j|^2
\prod_{\alpha=1}^t
\frac{\pi^{nr_{\alpha}-\frac{n(n-1)}{2}}}
{\prod_{j=1}^n(r_{\alpha}-j)!},
\end{equation}
\begin{equation*}
C_{N,\tau}= 
\tau^{nN-\frac{n(n-1)}{2}}
\pi^{n(r+1)} N^{-\frac{1}{2}n(n+1)} (N-n)^{-n(N-n)}     \prod_{k=N-n-r}^{N-r-1}  k !,
\end{equation*}
and    matrix  variables   are restricted to 
the integration region 
\begin{equation}\label{integrationregionprop}
\sum_{\alpha=1}^t T_{\alpha}T_{\alpha}^*+Q_0Q_0^*+Q_{t+1}Q_{t+1}^*\leq \mathbb{I}_n.
\end{equation}
\end{proposition}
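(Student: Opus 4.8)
The plan is to obtain \eqref{intrep} as an exact, finite-$N$ identity by composing two representations already available in \cite{LZ22}, precisely as in \cite[Proposition 2.2]{LZ23}; the substance is a change of variables together with careful bookkeeping of Jacobians and constants, not a new estimate. The starting point is the integral representation of the $n$-point correlation function in \cite[Proposition 1.3]{LZ22}: for the ${\rm GinUE}_N(A_0)$ ensemble, $R_N^{(n)}(A_0;z_1,\dots,z_n)$ equals an explicit prefactor — the Vandermonde $\prod_{i<j}|z_i-z_j|^2$, the Gaussian factor $e^{-\frac{N}{\tau}\sum_k|z_k|^2}$, and a normalization depending only on $N,n,\tau,r$ — times a Gaussian average over an auxiliary $n\times n$ complex matrix (our $Y$) of an auto-correlation of characteristic polynomials of $X$ and $X^*$ with arguments $z_1,\dots,z_n$, equivalently of a power of the $2n\times 2n$ determinant appearing in \eqref{gYUT}. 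First I would write this representation in the normalization used here and peel off the part of the prefactor that becomes $\widetilde{C}_{N,n,\tau}$ and $C_{N,\tau}$.

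Second, I would apply the duality formula for auto-correlations of characteristic polynomials from \cite{Gr,LZ22} to the remaining $X$-average, turning the $N$-dimensional Gaussian expectation into a finite-dimensional integral over rectangular complex matrices with a dual Gaussian weight and a determinantal integrand. Here the block structure \eqref{A0 form} of $A_0$, together with the zero block of size $R_0=N-r$ in $X_0={\rm diag}(A_0,0_{N-r})$, makes the dual integrand factorize across blocks: each scalar block $a_{\alpha}\mathbb{I}_{r_{\alpha}}$ ($\alpha=1,\dots,t$) contributes, in a rectangular variable of width $r_{\alpha}$, a factor carrying $(\det A_{\alpha})^{r_{\alpha}}$ and the linear couplings $\overline{a}_{\alpha}{\rm Tr}(\hat{Z}T_{\alpha}T_{\alpha}^*)+\cdots$; the blocks $z_0\mathbb{I}_{r_0}$ and $A_{t+1}$ contribute, in a variable of width $\widetilde{r}_{t+1}=r_0+r_{t+1}$, the $\widetilde{A}_{t+1}$-dependent terms and assemble into $\widetilde{Q}_{t+1}=[Q_0,Q_{t+1}]$ as in \eqref{Q convenience}; and the zero block contributes a clean power $R_0$. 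I would then reduce each rectangular variable to its upper-triangular part by a QR/Cholesky change of variables — writing the width-$r_{\alpha}$ variable as an $r_{\alpha}\times n$ isometric frame times the $n\times n$ upper-triangular $T_{\alpha}$ with nonnegative diagonal, and likewise producing $Q_0$ (size $n\times r_0$) and $Q_{t+1}$ (size $n\times r_{t+1}$) — integrate the frames over the relevant Stiefel manifolds, and keep the zero block's contribution as the determinant power $(\,\cdot\,)^{R_0-n}$ in \eqref{gYUT}. Absorbing the $r_{\alpha}$-sized powers into the frame Jacobians yields the factors $\prod_{\alpha,j}(t^{(\alpha)}_{jj})^{R_{\alpha,N}-j}$ and the constraint $\sum_{\alpha}T_{\alpha}T_{\alpha}^*+Q_0Q_0^*+Q_{t+1}Q_{t+1}^*\le\mathbb{I}_n$ of \eqref{integrationregionprop}. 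Finally, exponentiating the large powers via $r_{\alpha}=c_{\alpha}N+O(1)$ from \eqref{ralpha N} turns $(\det A_{\alpha})^{r_{\alpha}}$ and $(\det T_{\alpha}T_{\alpha}^*)^{r_{\alpha}}$ into the $c_{\alpha}\log\det A_{\alpha}$ and $c_{\alpha}\log\det(T_{\alpha}T_{\alpha}^*)$ terms of \eqref{fTY}; completing the square in the Gaussian average over $Y$ and the frames produces $-\frac{N-n}{\tau N}{\rm Tr}(YY^*)$ together with the off-diagonal term $\sum_{i<j}|(\sum_{\alpha}a_{\alpha}T_{\alpha}T_{\alpha}^*+z_0Q_0Q_0^*+Q_{t+1}A_{t+1}Q_{t+1}^*)_{i,j}|^2$ in \eqref{hQrewrite}; the determinantal dual integrand becomes $\det(\widehat{L}_1+\sqrt{N/(N-n)}\,\widehat{L}_2)$; and all variable-independent factors collect into \eqref{norm-1}.

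I expect the main obstacle to be precisely this last layer of bookkeeping rather than any analytic difficulty: one must verify that the Stiefel integrations and the associated Jacobians produce exactly the exponents $R_{\alpha,N}-j$ and $R_0-n$ — which is also where the hypothesis $R_0\ge n$ enters, guaranteeing non-negative powers and convergence — that completing the square in the $Y$- and frame-averages reproduces verbatim the quadratic and linear traces in $h(T,Q)$, including the delicate off-diagonal $\sum_{i<j}|(\,\cdot\,)_{i,j}|^2$ piece, and that the two blocks $z_0\mathbb{I}_{r_0}$ and $A_{t+1}$ merge correctly into the single objects $\widetilde{A}_{t+1},\widetilde{Q}_{t+1}$ with $\widetilde{r}_{t+1}=r_0+r_{t+1}$ and into the block $\widehat{B}_{t+1}$ of $\widehat{L}_1$. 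Since \cite[Proposition 2.2]{LZ23} carries out this computation in full under the same hypotheses, the proof reduces to transcribing it, the only point needing care being that $z_1,\dots,z_n$ are here kept fully general — not yet scaled to the regular edge point $z_0$ — so that \eqref{intrep} holds as an exact identity for every $N$ with $R_0\ge n$, ready for the subsequent Laplace-method analysis.
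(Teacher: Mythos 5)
Your proposal follows exactly the route the paper indicates: start from the integral representation in \cite[Proposition 1.3]{LZ22}, apply the characteristic-polynomial duality of \cite{Gr,LZ22}, QR-reduce the large rectangular dual variables to upper-triangular $T_{\alpha}$ while keeping the finite-width $Q_0,Q_{t+1}$ rectangular, and track Jacobians, the exact split $r_{\alpha}=c_{\alpha}N+R_{\alpha,N}$, and the constraint \eqref{integrationregionprop}. The paper itself does not carry out this computation here but defers verbatim to \cite[Proposition 2.2]{LZ23}, so your blind reconstruction matches the intended derivation.
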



In order to  find the exact approximation of the integral 
representation  in \eqref{intrep},  we first give   a concentration reduction via restricting the integration region to a small domain.   For this, 
 it's easy to see from  \eqref{A alpha} that  
\begin{equation*}
A_{\alpha}=
\begin{bmatrix}
\sqrt{ \gamma_N} (z_0-a_{\alpha}) \mathbb{I}_n &
-Y^* \\
Y &
\sqrt{ \gamma_N} \overline{z_0-a_{\alpha}}\mathbb{I}_n
\end{bmatrix}+
\sqrt{ \gamma_N}N^{-\frac{1}{2}}
\begin{bmatrix}
\hat{Z}  &   \\
&   \hat{Z}^*
\end{bmatrix},
\end{equation*}
and 
\begin{small}
\begin{equation}\label{logAalpha decompose}
\log\det(A_{\alpha})=\log\det\left(
\gamma_N f_{\alpha}\mathbb{I}_n+YY^*
\right)+\log\det\left(
\mathbb{I}_{2n}+\sqrt{ \gamma_N}N^{-\frac{1}{2}}
\widehat{A}_{\alpha}
\right),
\end{equation}
\end{small}
where 
\begin{equation}\label{falpha}f_{\alpha}:=|z_0-a_{\alpha}|^2, \quad \alpha=1,\cdots,t,\end{equation}
and
\begin{equation}\label{HAalpha}
\begin{aligned}
&\widehat{A}_{\alpha}=
\left[\begin{smallmatrix}
\sqrt{\gamma_N}\overline{z_0-a_{\alpha}}\left(
 \gamma_N f_{\alpha}\mathbb{I}_n+Y^*Y
\right)^{-1}\hat{Z}
& Y^*\left(
\gamma_N f_{\alpha}\mathbb{I}_n+YY^*
\right)^{-1}\hat{Z}^*       \\
-Y\left(
\gamma_N f_{\alpha}\mathbb{I}_n+Y^*Y
\right)^{-1}\hat{Z}
& \sqrt{\gamma_N}(z_0-a_{\alpha})\left(
\gamma_Nf_{\alpha}\mathbb{I}_n+YY^*
\right)^{-1}\hat{Z}^*
\end{smallmatrix}\right].
\end{aligned}
\end{equation}

Take all the  leading terms of $f(T,Y,Q)$ in  \eqref{fTY} and put  
\begin{small}
\begin{equation}\label{f0TY}
\begin{aligned}
&f_0(\tau;T,Y,Q)=\sum_{\alpha=1}^t\left(
c_{\alpha}\log\det(T_{\alpha}T_{\alpha}^*) 
+c_{\alpha}
\log\det\big(
f_{\alpha}\mathbb{I}_n+YY^*
\big)\right)       
 +
\frac{1}{\tau}\Big(h_{0}(T,Q)- {\rm Tr}\big(YY^*)\Big)
\end{aligned}
\end{equation}
\end{small}
with \begin{small}
\begin{equation*}
\begin{aligned}
&h_{0}(T,Q)= 
-\sum_{\alpha=1}^t f_{\alpha}{\rm Tr}\big(
T_{\alpha}T_{\alpha}^*
\big)
-{\rm Tr}Q_{t+1}
\big(
z_0\mathbb{I}_{r_{t+1}}-A_{t+1}
\big)^*
\big(
z_0\mathbb{I}_{r_{t+1}}-A_{t+1}
\big)
Q_{t+1}^*               \\
&+
|z_0|^2\bigg(
\sum_{\alpha=1}^t
{\rm Tr}\big(
T_{\alpha}T_{\alpha}^*
\big)+{\rm Tr}\big(
Q_0Q_0^*
\big)+{\rm Tr}\big(
Q_{t+1}Q_{t+1}^*
\big)
\bigg)         \\ 
&+\sum_{1\leq i < j\leq n}
\bigg|
\bigg(
\sum_{\alpha=1}^t a_{\alpha}T_{\alpha}T_{\alpha}^*
+z_0Q_0Q_0^*+Q_{t+1}A_{t+1}Q_{t+1}^*
\bigg)_{i,j}
\bigg|^2.         
\end{aligned}
\end{equation*}
\end{small}
Note that  the two sets of matrix variables $\{Y\}$ and  $ \{Q_0, Q_{t+1}, T_{1}, \ldots, T_t\}$ in $f_0(\tau;T,Y)$ are separate,  we can do Taylor expansions near  the maximum points respectively, according to   two maximum lemmas  about  $f_0(\tau;T,Y)$ shown in  \cite[Lemma 2.4 and Lemma 2.5]{LZ23}. They  play a  crucial   role in  verifying   local eigenvalue statistics of the deformed GinUEs.  

 By restricting the  integration region to 
\begin{equation}\label{regiondelta}
\Omega_{N,\delta}=\Omega
\cap
A_{N,\delta},
\end{equation}
where
\begin{equation}\label{Omega}
\Omega=\bigg\{ \big(\{T_\alpha\}, Y, Q_0,Q_{t+1}\big)\Big|\sum_{\alpha=1}^t
T_{\alpha}T_{\alpha}^*+Q_0Q_0^*+Q_{t+1}Q_{t+1}^*\leq \mathbb{I}_n \bigg\}
\end{equation}
and
\begin{equation}\label{ANdelta}
A_{N,\delta}=\left\{ \big(\{T_\alpha\}, Y, Q_0,Q_{t+1}\big)\Big|
S(T,Y,Q)
\leq \delta \right\},
\end{equation}
with positive constant $\delta$ and
\begin{equation}\label{SYUT}
\begin{aligned}
S(T,Y,Q)&=\sum_{\alpha=1}^t
\left(
{\rm Tr}\big(
T_{{\rm d},\alpha}-\tau c_{\alpha} |z_0-a_{\alpha}|^{-2}
\mathbb{I}_n
\big)
\big(
T_{{\rm d},\alpha}-\tau c_{\alpha} |z_0-a_{\alpha}|^{-2}\mathbb{I}_n
\big)^*
+{\rm Tr}(T_{{\rm u},\alpha}T_{{\rm u},\alpha}^*)
\right)        \\
&+
{\rm Tr}\big(
YY^*+Q_0Q_0^*+Q_{t+1}Q_{t+1}^*
\big),
\end{aligned}
\end{equation}
we have 
\begin{proposition}\label{RNn delta} With the same notation  and assumptions as in  Proposition \ref{foranlysis}, if $P_{00}(z_0)=1/\tau$, then for any positive constant $\delta$ there exists $\Delta>0$ such that
\begin{equation}\label{RNndelta}
R_N^{(n)}(A_0;z_1,\cdots,z_n)=
D_{N,n}\,\Big(
I_{N,\delta}+O\big(
e^{-\frac{1}{2}N\Delta}
\big)
\Big),
\end{equation}
where
\begin{equation}\label{INdelta}
I_{N,\delta}=
  \int_{\Omega_{N,\delta}} g(T,Y,Q)\exp\Big\{ N\big(f(T,Y,Q)-f_0\big(\tau;\big\{\frac{\tau c_{\alpha}}{f_{\alpha}}\mathbb{I}_n\big\},0,0\big)\big)
 \Big\} {\rm d}V,
\end{equation}
  and 
\begin{equation}\label{DNn}
{\rm d}V= {\rm d}Y
{\rm d}Q_0{\rm d}Q_{t+1} \prod_{\alpha=1}^t
{\rm d}T_{\alpha},\quad D_{N,n}=\widetilde{C}_{N,n,\tau}
e^{nN(|z_0|^2/\tau-1)}\prod_{\alpha=1}^t
(\tau c_{\alpha})^{nNc_{\alpha}}.
\end{equation}
\end{proposition}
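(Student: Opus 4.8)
The statement is a Laplace‑type localisation: once the deterministic prefactor of \eqref{intrep} is moved into the exponent, the integral concentrates on $\Omega_{N,\delta}$ and the tail over $\Omega\setminus\Omega_{N,\delta}=\Omega\cap\{S>\delta\}$ is exponentially small. To carry out the first move, write $f_0^{\ast}:=f_0\big(\tau;\{\tfrac{\tau c_\alpha}{f_\alpha}\mathbb I_n\},0,0\big)$. Since by \eqref{f0TY} $f_0$ depends on $T_\alpha$ only through $T_\alpha T_\alpha^{\ast}$, evaluating at $T_\alpha T_\alpha^{\ast}=\tfrac{\tau c_\alpha}{f_\alpha}\mathbb I_n$, $Y=0$, $Q_0=Q_{t+1}=0$ and using the defining relation $P_{00}(z_0)=\sum_\alpha c_\alpha/f_\alpha=1/\tau$ gives, after a short computation, $f_0^{\ast}=n\sum_{\alpha=1}^{t}c_\alpha\log(\tau c_\alpha)+n(|z_0|^2/\tau-1)$, whence $D_{N,n}=\widetilde C_{N,n,\tau}\,e^{Nf_0^{\ast}}$ by \eqref{norm-1} and \eqref{DNn}, i.e.\ $\widetilde C_{N,n,\tau}=D_{N,n}e^{-Nf_0^{\ast}}$. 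Thus \eqref{intrep} reads $R_N^{(n)}=D_{N,n}\int_\Omega g\,\exp\{N(f-f_0^{\ast})\}\,{\rm d}V$, and it remains to show $\int_{\Omega\setminus\Omega_{N,\delta}}g\,\exp\{N(f-f_0^{\ast})\}\,{\rm d}V=O(e^{-\frac12 N\Delta})$.

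Next I split $f=f_0(\tau;T,Y,Q)+\varepsilon_N(T,Y,Q)$. Using \eqref{logAalpha decompose} one collects
\begin{multline*}
\varepsilon_N=\tfrac1\tau\big(h-h_0\big)+\sum_{\alpha=1}^{t}c_\alpha\Big[\log\det\big(\gamma_N f_\alpha\mathbb I_n+YY^{\ast}\big)-\log\det\big(f_\alpha\mathbb I_n+YY^{\ast}\big)\Big]\\
+\sum_{\alpha=1}^{t}c_\alpha\log\det\big(\mathbb I_{2n}+\sqrt{\gamma_N}\,N^{-1/2}\widehat A_\alpha\big)+\tfrac{n}{\tau N}{\rm Tr}(YY^{\ast}).
\end{multline*}
On $\Omega$, where $T_\alpha,Q_0,Q_{t+1}$ are bounded and $\hat Z$ lies in a fixed compact set: $h-h_0$ is exactly the sum of the $N^{-1/2}$‑terms of \eqref{hQrewrite}, hence $O(N^{-1/2})$; writing $\sigma_1,\dots,\sigma_n$ for the singular values of $Y$, the first determinant sum equals $\sum_{\alpha,j}\log\big(1+(\gamma_N-1)f_\alpha/(f_\alpha+\sigma_j^2)\big)$ with $0\le f_\alpha/(f_\alpha+\sigma_j^2)\le1$ and $\gamma_N-1=O(1/N)$, so it is $O(1/N)$ uniformly in $Y$; and because the resolvent blocks in \eqref{HAalpha} satisfy $\|Y(\gamma_N f_\alpha\mathbb I_n+Y^{\ast}Y)^{-1}\|,\ \|(\gamma_N f_\alpha\mathbb I_n+YY^{\ast})^{-1}\|=O(1)$ \emph{uniformly over all $Y$}, one has $\|\widehat A_\alpha\|=O(1)$ and thus $\log\det(\mathbb I_{2n}+\sqrt{\gamma_N}N^{-1/2}\widehat A_\alpha)=O(N^{-1/2})$ uniformly in $Y$. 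Therefore $|\varepsilon_N|\le C_1N^{-1/2}+\tfrac{n}{\tau N}\|Y\|^2$ on $\Omega$; the only non‑uniformly‑small term is the last, and it will be swallowed by the Gaussian decay already contained in $f_0$.

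Now invoke the maximum lemmas \cite[Lemma 2.4 and Lemma 2.5]{LZ23}: since the variables $\{Y\}$ and $\{Q_0,Q_{t+1},T_1,\dots,T_t\}$ decouple in $f_0$, these give (after upgrading from the behaviour near the maximiser to a global estimate by compactness of $\Omega$ in the $(T,Q)$ directions together with the explicit quadratic/Gaussian decay of $f_0$ in $Y$) a constant $\kappa>0$ with $f_0(\tau;T,Y,Q)-f_0^{\ast}\le-\kappa\,S(T,Y,Q)$ on $\Omega$. On $\Omega\setminus\Omega_{N,\delta}=\Omega\cap\{S>\delta\}$, using $\|Y\|^2\le S$ from \eqref{SYUT}, this is $\le-\tfrac{\kappa}{2}\delta-\tfrac{\kappa}{2}\|Y\|^2$, so combining with the previous step,
\[
N\big(f-f_0^{\ast}\big)\le C_1N^{1/2}+\tfrac{n}{\tau}\|Y\|^2-\tfrac{N\kappa}{2}\delta-\tfrac{N\kappa}{2}\|Y\|^2\le-N\Delta-cN\|Y\|^2
\]
for all large $N$, with $\Delta,c>0$ depending only on $\delta$ and $\kappa$ (the $O(N^{1/2})$ term is beaten by $\tfrac{N\kappa}{2}\delta$, and $\tfrac{n}{\tau}-\tfrac{N\kappa}{2}\le-c$ once $N\ge4n/(\tau\kappa)$). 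Finally $|g|$ is bounded on $\Omega$ by $P(\|Y\|)$ times a function of $(T,Q)$ alone, with $P$ a polynomial of $N$‑independent degree (all exponents in \eqref{gYUT} being $O(1)$ and the $Y$‑dependent determinants there of $N$‑independent size, while $\det A_\alpha$ stays bounded below for large $N$). Bounding $|g|$ this way and retaining the factor $e^{N\sum_\alpha c_\alpha\log\det(T_\alpha T_\alpha^{\ast})}=\prod_{\alpha,j}(t_{jj}^{(\alpha)})^{Nc_\alpha}$ that sits inside $e^{N(f-f_0^{\ast})}$, the Gaussian makes the $Y$‑integral of $P(\|Y\|)e^{-cN\|Y\|^2}$ of order $O(1)$, and the remaining $(T,Q)$‑integral over the compact set $\{\sum_\alpha T_\alpha T_\alpha^{\ast}+Q_0Q_0^{\ast}+Q_{t+1}Q_{t+1}^{\ast}\le\mathbb I_n\}$ is finite—the only possible singularity, from $\prod_{\alpha,j}(t_{jj}^{(\alpha)})^{R_{\alpha,N}-j}$, being integrable because $Nc_\alpha+R_{\alpha,N}-j>-1$ for large $N$. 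Hence $\int_{\Omega\setminus\Omega_{N,\delta}}|g|\,e^{N(f-f_0^{\ast})}\,{\rm d}V=O(e^{-N\Delta})$, which gives \eqref{RNndelta}.

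The main obstacle is the uniform‑in‑$Y$ bookkeeping of the second step over the non‑compact $Y$‑variable—in particular checking that the correction $\log\det(\mathbb I_{2n}+\sqrt{\gamma_N}N^{-1/2}\widehat A_\alpha)$ is $O(N^{-1/2})$ for \emph{all} $Y$ via the uniform boundedness of the resolvents in \eqref{HAalpha}, while the single genuinely $Y$‑unbounded correction $\tfrac{n}{\tau N}{\rm Tr}(YY^{\ast})$ is absorbed into the Gaussian part of $f_0$—together with, if \cite[Lemma 2.4 and Lemma 2.5]{LZ23} supply only the local behaviour of $f_0$, promoting it to the global coercive estimate $f_0-f_0^{\ast}\le-\kappa S$ on all of $\Omega$, including the boundary strata where $f_0\to-\infty$.
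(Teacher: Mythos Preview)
Your overall strategy—extract $D_{N,n}=\widetilde C_{N,n,\tau}e^{Nf_0^\ast}$, split $f=f_0+\varepsilon_N$, bound $\varepsilon_N$ uniformly on $\Omega$, and use the maximum lemmas to show exponential decay on $\Omega\setminus\Omega_{N,\delta}$—is exactly the paper's. Your computation of $f_0^\ast$, your decomposition of $\varepsilon_N$, and the four termwise bounds (in particular the uniform-in-$Y$ control of $\widehat A_\alpha$) all match the paper's argument.

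The genuine gap is the coercive inequality $f_0-f_0^\ast\le-\kappa\,S$ on $\Omega$, which you justify by ``explicit quadratic/Gaussian decay of $f_0$ in $Y$''. This is \emph{false} precisely because of the edge hypothesis $P_{00}(z_0)=1/\tau$. Expanding the $Y$-part of $f_0$ at the maximiser (with $(T,Q)$ held at their optimum) gives
\[
\sum_{\alpha}c_\alpha\log\det(f_\alpha\mathbb I_n+YY^\ast)-\tfrac1\tau\mathrm{Tr}(YY^\ast)
=\Big(P_{00}(z_0)-\tfrac1\tau\Big)\|Y\|^2-\tfrac{P_1}{2}\mathrm{Tr}(YY^\ast)^2+O(\|Y\|^6),
\]
and the quadratic coefficient vanishes. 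So $f_0-f_0^\ast\sim-\tfrac{P_1}{2}\|Y\|^4$ for small $Y$, while $S\sim\|Y\|^2$; no $\kappa>0$ makes $-\tfrac{P_1}{2}\|Y\|^4\le-\kappa\|Y\|^2$ near $Y=0$. The quartic $Y$-behaviour is in fact the whole point of the edge scaling (it is why $Y$ is rescaled by $N^{-1/4}$ in \eqref{change scale}). Hence your chain $f_0-f_0^\ast\le-\kappa S\le-\tfrac\kappa2\delta-\tfrac\kappa2\|Y\|^2$ used to absorb $\tfrac{n}{\tau}\|Y\|^2$ is not available.

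The paper avoids any quantitative coercivity by a simple device: on $\Omega\cap A_{N,\delta}^c$ one only has the qualitative bound $f_0-f_0^\ast\le-\Delta$. Writing $N=(N-N_0)+N_0$ for a fixed large $N_0$, the factor $e^{(N-N_0)(f_0-f_0^\ast)}\le e^{-(N-N_0)\Delta}$ supplies the exponential decay, while
\[
\int_{\Omega}|g|\,\exp\Big\{N_0\big(f_0-f_0^\ast\big)+\tfrac{n}{\tau}\|Y\|^2\Big\}\,{\rm d}V<\infty
\]
because the $Y$-exponent behaves like $-(N_0-n)\tau^{-1}\|Y\|^2+O(\log\|Y\|)$ at infinity (negative once $N_0>n$), and the singularity of $|g|\cdot\prod_{\alpha,j}(t_{jj}^{(\alpha)})^{N_0c_\alpha}$ at $t_{jj}^{(\alpha)}=0$ is integrable once $N_0c_\alpha+R_{\alpha,N}-j>-1$; this is the reason for the paper's choice $N_0>\sum_\alpha(n-R_{\alpha,N})/c_\alpha$. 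Your argument can be repaired by this $N_0$-trick, or alternatively by a case split $\|Y\|^2\lessgtr 1$ (on the bounded piece $\tfrac n\tau\|Y\|^2$ is a harmless constant absorbed into $-N\Delta$; on the unbounded piece the genuine linear decay $-\tfrac1\tau\|Y\|^2$ of $f_0$ at infinity takes over), but not via the global quadratic bound you asserted.
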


\begin{proof}[Proof of Proposition \ref{RNn delta}.] 

\hspace*{\fill}
Using the  two maximum lemmas  about  $f_0(\tau;T,Y)$  in  \cite[Lemma 2.4 and Lemma 2.5]{LZ23},    we see that $f_0(\tau;T,Y,Q)$ in \eqref{f0TY}  attains its maximum  only at  
\begin{small}
\begin{equation}\label{maximumpoint}
Y=0, \ Q_0=0, \ Q_{t+1}=0, \quad T_{\alpha}=\sqrt{\tau c_{\alpha}/f_{\alpha}}\mathbb{I}_n,\quad \alpha=1,\cdots,t. 
\end{equation}
\end{small}
So  there exists $\Delta>0$ such that 
\begin{small}
\begin{equation*}
f_0(\tau;T,Y,Q)-f_0\big(\tau;\big\{\frac{\tau c_{\alpha}}{f_{\alpha}}\mathbb{I}_n\big\},0,0\big)\leq -\Delta
\end{equation*}
\end{small}
in  the domain $\Omega\cap
A_{N,\delta }^{c}$. 

On one hand,  noting that  $$ \sum_{\alpha=1}^t
T_{\alpha}T_{\alpha}^*+Q_0Q_0^*+Q_{t+1}Q_{t+1}^*\leq \mathbb{I}_n$$
in the domain   $\Omega$ defined by  \eqref{Omega}, 
we know  for 
$\hat{Z}$
in a compact subset of $\mathbb{C}^n$ that  
 the partial  sub-leading terms of $f(T,Y,Q)$ in  \eqref{fTY} can be   bounded  by  $O(N^{-\frac{1}{2}})$, that is, 
 \begin{small}
\begin{equation}\label{error term1}
\begin{aligned}
&N^{-\frac{1}{2}}\sum_{\alpha=1}^t 
\left(
\overline{a}_{\alpha}{\rm Tr}\big(
\hat{Z} T_{\alpha}T_{\alpha}^*
\big)+a_{\alpha}{\rm Tr}\big(
\hat{Z}^* T_{\alpha}T_{\alpha}^*
\big)
\right) 
+N^{-\frac{1}{2}}
\left(
\overline{z}_0{\rm Tr}\big(
\hat{Z} Q_0Q_0^*
\big)+z_0{\rm Tr}\big(
\hat{Z}^* Q_0Q_0^*
\big)\right)
\\
&+N^{-\frac{1}{2}}
\left(
{\rm Tr}\big(
\hat{Z} Q_{t+1}A_{t+1}^*Q_{t+1}^*
\big)+{\rm Tr}\big(
\hat{Z}^* Q_{t+1}A_{t+1}Q_{t+1}^*
\big)
\right)
=O(N^{-\frac{1}{2}}).
\end{aligned}
\end{equation}
\end{small}
On the other hand,  apply the singular value decomposition \eqref{singularcorre} for $Y$ and note that  all $f_{\alpha}\not=0$ because of the restriction $P_{00}(z_0)=1$, we obtain 
\begin{small}
\begin{equation*}
\begin{aligned}
&(
\sqrt{\gamma_N}f_{\alpha}\mathbb{I}_n+Y^*Y)^{-1}=O(1),\quad Y(\sqrt{\gamma_N}f_{\alpha}\mathbb{I}_n+Y^*Y)^{-1}=O(1),
\end{aligned}
\end{equation*}
\end{small}
and 
\begin{small}
\begin{equation}\label{error term2}
\log\det\left(
\mathbb{I}_{2n}+\sqrt{\gamma_N} N^{-\frac{1}{2}}
\widehat{A}_{\alpha}
\right)=O(N^{-\frac{1}{2}})
\end{equation}
\end{small}
since $\widehat{A}_{\alpha}=O(1)$. 
Meanwhile, 
\begin{small}
\begin{equation*}
\log\det\left(
\gamma_N f_{\alpha}\mathbb{I}_{n}+
YY^*
\right)=\log\det\left(
f_{\alpha}\mathbb{I}_{n}+
YY^*
\right)+ O\big( N^{-1} \big).
\end{equation*}
\end{small}
Combination of  \eqref{fTY}, \eqref{logAalpha decompose}, \eqref{f0TY}, \eqref{error term1} and \eqref{error term2}  give rise to 
\begin{small}
\begin{equation*}
N\Big(
 f(T,Y,Q)-f_0\big(\tau;\big\{\frac{\tau c_{\alpha}}{f_{\alpha}}\mathbb{I}_n\big\},0,0\big)
\Big)=
N\Big(
f_0(\tau;T,Y,Q)-f_0\big(\tau;\big\{\frac{\tau c_{\alpha}}{f_{\alpha}}\mathbb{I}_n\big\},0,0\big)
\Big)+
\frac{n}{\tau}{\rm Tr}(YY^*)+O\big(
\sqrt{N}
\big).
\end{equation*}
\end{small}

For $g(T,Y,Q)$ in \eqref{gYUT},  all factors can be controlled by a  polynomial function, so by taking a sufficiently large $N_0>\sum_{\alpha=1}^t (n-R_{\alpha,N})/c_{\alpha}$ we know 
\begin{small}
\begin{equation*}
\int_{A_{N,\delta}^{c}\cap \Omega}
| g(T,Y,Q) |\exp\left\{ N_0\Big(
 f_0(\tau;T,Y,Q)-f_0\big(\tau;\big\{\frac{\tau c_{\alpha}}{f_{\alpha}}\mathbb{I}_n\big\},0,0\big)
\Big)+\frac{n}{\tau}{\rm Tr}(YY^*) \right\}
{\rm d}V
=O(1).
\end{equation*}
\end{small}
Hence,
\begin{small}
\begin{equation*}
\begin{aligned}
&\left|
\int_{A_{N,\delta}^{\complement}\cap \Omega}
g(T,Y,Q) \exp\left\{ N\Big(
 f(T,Y,Q)-f_0\big(\tau;\big\{\frac{\tau c_{\alpha}}{f_{\alpha}}\mathbb{I}_n\big\},0,0\big)
\Big) \right\}
{\rm d}V
\right|   \\
&\leq e^{-(N-N_0)\Delta+O(\sqrt{N})}
 O(1)    =O\big(
e^{-\frac{1}{2}N\Delta}
\big).
\end{aligned}
\end{equation*}
\end{small}

This  thus completes the proof.
\end{proof}

The remaining task is to analyze the matrix integral $I_{N,\delta}$ with sufficiently small positive $\delta$, which 
follows the argument of   Laplace method. To be precise, the proof  of Theorem   \ref{2-complex-correlation} will be divided into three steps: {\bf Step 1} deals with Taylor expansions  of $f(T,Y,Q)$ and  of $g(T,Y,Q)$. According to  the integration region in \eqref{integrationregionprop},   the most relevant variables are triangular matrices $\big\{ T_{\alpha} \big\}$, equivalently, diagonal matrices $\big\{ T_{{\rm d},\alpha} \big\}$ and strictly upper triangular matrices $\big\{ T_{{\rm u},\alpha} \big\}$ as in \eqref{Talpha 2}. With the coming of the change of variables
\begin{equation}\label{prechange1}
T_{{\rm d},1},T_{{\rm d},2},\cdots,T_{{\rm d},t}\longmapsto
\widetilde{T}_{{\rm d},1},\widetilde{T}_{{\rm d},2},\cdots,\widetilde{T}_{{\rm d},t}\longmapsto \Gamma_1,\widetilde{T}_{{\rm d},2},\cdots,\widetilde{T}_{{\rm d},t}
\end{equation}
and 
\begin{equation}\label{prechange2}
T_{{\rm u},1},T_{{\rm u},2},\cdots,T_{{\rm u},t}\longmapsto
G_1,T_{{\rm u},2},\cdots,T_{{\rm u},t};
\end{equation}
{\bf Step 2} is a standard procedure in Laplace method to remove the error terms resulted from the expansions in first two steps; In {\bf Step 3}, after dealing with several matrix integrals, we can complete the proof.

\subsection{Proof of Theorem   \ref{2-complex-correlation}: $z_0\neq 0$} \label{case1}

The proof of Theorem  is  divided into two cases, one  deals with the case when $z_0\not=0$ while  the other   deals with the case when $z_0=0$. With Section \ref{sectnotation} in mind, we still need some extra notations. For a family of matrices $\{ T_{{\rm u},\alpha}|\alpha=1,\cdots,t\}$, define 
$\|T_{\rm u}\|:=\sum_{\alpha=1}^t \|T_{{\rm u},\alpha}\|,\quad    
     \| T_{\rm u}\|_{2} :=\sum_{\alpha=2}^t
 \| T_{{\rm u},\alpha}\|.$
 Similarly,  for a family of matrices $\{ \widetilde{T}_{{\rm d},\alpha}|\alpha=1,\cdots,t\}$, define
$
  \|\widetilde{T}_{{\rm d}}\|:=\sum_{\alpha=1}^t
 \| \widetilde{T}_{{\rm d},\alpha}\|, \ 
 \| 
  \widetilde{T}_{{\rm d}}\|_{2} :=\sum_{\alpha=2}^t
 \| 
  \widetilde{T}_{{\rm d},\alpha}\|.
$
 For a complex matrix $M$, denote $M^{({\rm d})}$ the diagonal part of M, and $M^{({\rm off},{\rm u})}$ the strictly upper triangular part of M;  in particular,  for $M_{\alpha}$ with index $\alpha$,   the corresponding quantity  is denoted by   $M^{(\alpha,{\rm d})}$ and $M^{(\alpha,{\rm off},{\rm u})}$.
 
 We need the following two propositions concerning the Taylor expansions, whose proofs are postponed   until  next section.  
\begin{proposition} \label{fTaylor}

With $f(T,Y,Q)$  in \eqref{fTY}, we have 
\begin{equation}\label{fTYexpanNon0}
f(T,Y,Q)-f_0\big(\tau;\big\{\frac{\tau c_{\alpha}}{f_{\alpha}}\mathbb{I}_n\big\},0,0\big)
=F_0+F_1+O(F_2),
\end{equation}
where
\begin{equation*}
\begin{aligned}
F_0&=\frac{N^{-\frac{1}{2}}}{\tau}\left(
\overline{z_0}
{\rm Tr}\big(
\hat{Z}
\big)+
z_0
{\rm Tr}\big(
\hat{Z}^*
\big)
\right) +\frac{n^2}{N}       \\
&-\frac{N^{-1}}{2}
\sum_{\alpha=1}^t\frac{c_{\alpha}}{f_{\alpha}^2}
\left(
\overline{z_0-a_{\alpha}}^2{\rm Tr}\big(
\hat{Z}^2
\big)+
(z_0-a_{\alpha})^2{\rm Tr}\big(
\hat{Z}^*
\big)^2
\right)  ,
\end{aligned}
\end{equation*}
\begin{small}
\begin{equation}\label{f1Non0}
\begin{aligned}
&F_1=\frac{|z_0|^2}{\tau}{\rm Tr}(\Gamma_1)-
\frac{1}{\tau}{\rm Tr}Q_{t+1}
\big(
z_0\mathbb{I}_{r_{t+1}}-A_{t+1}
\big)^*
\big(
z_0\mathbb{I}_{r_{t+1}}-A_{t+1}
\big)
Q_{t+1}^*
  -\sum_{\alpha=2}^t\frac{f_{\alpha}^2}{2\tau^2 c_{\alpha}}{\rm Tr}\big(\widetilde{T}_{{\rm d},\alpha}^2  \big)         \\
&-\frac{f_{1}^2}{2\tau^2 c_{1}}{\rm Tr}
\left(
 (Q_0Q_0^*)^{({\rm d})}+\sum_{\alpha=2}^t
\widetilde{T}_{{\rm d},\alpha}
 \right)^2
-\frac{N^{-\frac{1}{2}}}{\tau}{\rm Tr}\Big(
(\overline{a}_{1}\hat{Z}+a_{1}\hat{Z}^*)
\big(
(Q_0Q_0^*)^{({\rm d})}+\sum_{\alpha=2}^t \widetilde{T}_{{\rm d},\alpha}
\big)
\Big)     \\
&+\frac{N^{-\frac{1}{2}}}{\tau}\sum_{\alpha=2}^t{\rm Tr}
\big((\overline{a}_{\alpha}\hat{Z}+a_{\alpha}\hat{Z}^*)
\widetilde{T}_{{\rm d},\alpha}\big)+
\frac{N^{-\frac{1}{2}}}{\tau}
\left(
\overline{z}_0{\rm Tr}\big(
\hat{Z} Q_0Q_0^*
\big)+z_0{\rm Tr}\big(
\hat{Z}^* Q_0Q_0^*
\big) 
  \right)     \\
&-\frac{f_1^2}{\tau^2 c_1}
 {\rm Tr}\left(
 (Q_0Q_0^*)^{({\rm off},{\rm u})}+\sum_{\alpha=2}^t
 \sqrt{\frac{\tau c_{\alpha}}{f_{\alpha}}}T_{{\rm u},\alpha}
 \right)
 \left(
 (Q_0Q_0^*)^{({\rm off},{\rm u})}+\sum_{\alpha=2}^t
 \sqrt{\frac{\tau c_{\alpha}}{f_{\alpha}}}T_{{\rm u},\alpha}
 \right)^*  \\
 &-\sum_{\alpha=2}^t
\frac{f_{\alpha}}{\tau}{\rm Tr}\big(
T_{{\rm u},\alpha}T_{{\rm u},\alpha}^*
\big) +\frac{1}{\tau}\sum_{1\leq i < j\leq n}
 \left|
(z_0-a_1) (Q_0Q_0^*)^{({\rm off},{\rm u})}_{i,j}
+\sum_{\alpha=2}^t
\sqrt{\frac{\tau c_{\alpha}}{f_{\alpha}}}
(a_{\alpha}-a_1)
t_{i,j}^{(\alpha)}
 \right|^2   \\
 &-\frac{P_1}{2} {\rm Tr}(YY^*)^2 
+N^{-\frac{1}{2}}\left(
\overline{P_0}{\rm Tr}\big(
Y^*Y\hat{Z}
\big)+P_0{\rm Tr}\big(
YY^*\hat{Z}^*
\big)
\right),
\end{aligned}
\end{equation}
\end{small}
and
\begin{small}
\begin{equation*}
\begin{aligned}
F_2&=N^{-\frac{3}{2}}+\| Y \|^6+N^{-1}\| Y \|^2+N^{-\frac{1}{2}}\| Y \|^4
                 \\
&+ \Omega_{{\rm error}}^3+
N^{-\frac{1}{2}}\big(
\|\Gamma_1\|+\Omega_{{\rm error}}^2
\big)+\big(
\|\Gamma_1\|+\|G_1\|
\big)\Omega_{{\rm error}}.
\end{aligned}
\end{equation*}
\end{small}
Here  the  changes of variables  in    \eqref{Tdalpha1} \eqref{matrix transformations1} and  \eqref{matrix transformations2} have been used,   and  $\Omega_{{\rm error}}$ is defined in  \eqref{Omegaerror}, in Section \ref{propproofs}.   \end{proposition}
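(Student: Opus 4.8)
The plan is to obtain \eqref{fTYexpanNon0} by Taylor-expanding each of the finitely many explicit pieces of $f$ in \eqref{fTY} around the maximum point \eqref{maximumpoint} and then sorting the resulting monomials by size. Three structural facts do the work: (i) in the leading part $f_0$ of \eqref{f0TY} the variable $Y$ is decoupled from $\{Q_0,Q_{t+1},T_1,\dots,T_t\}$, so the two blocks are expanded separately; (ii) the criticality hypothesis reads $P_{00}(z_0)=\sum_\alpha c_\alpha/f_\alpha=1/\tau$, and together with $\sum_\alpha c_\alpha/f_\alpha^2=P_1$ and $\sum_\alpha c_\alpha(z_0-a_\alpha)/f_\alpha^2=-P_0$ this is exactly what cancels the first-order terms and collapses the various sums into the clean coefficients $\overline{z_0}/\tau$, $P_0$, $P_1$; (iii) on $\Omega_{N,\delta}$ the bound $S(T,Y,Q)\le\delta$ keeps $\|Y\|$, $\|\widetilde T_{{\rm d},\alpha}\|$, $\|T_{{\rm u},\alpha}\|$, $\|Q_0\|$, $\|Q_{t+1}\|$ all $O(\sqrt\delta)$, so the logarithm and resolvent series converge and every cubic-or-higher, or spare-$N^{-1/2}$, monomial is legitimately absorbed into $O(F_2)$.

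For the $Y$-block I would use $\log\det(f_\alpha\mathbb{I}_n+YY^*)=n\log f_\alpha+\tfrac1{f_\alpha}{\rm Tr}(YY^*)-\tfrac1{2f_\alpha^2}{\rm Tr}((YY^*)^2)+O(\|Y\|^6)$; summing against $c_\alpha$ and using $\sum c_\alpha/f_\alpha=1/\tau$, the linear term cancels $-\tfrac{N-n}{\tau N}{\rm Tr}(YY^*)$ up to the remnant $O(N^{-1}\|Y\|^2)\subset O(F_2)$, and the quartic term is $-\tfrac{P_1}2{\rm Tr}((YY^*)^2)$. The extra factor $\log\det(\mathbb{I}_{2n}+\sqrt{\gamma_N}N^{-1/2}\widehat A_\alpha)$ from \eqref{logAalpha decompose} contributes, at first order in $\widehat A_\alpha$ and through the $Y$-dependence of the resolvents $(\gamma_N f_\alpha\mathbb{I}_n+Y^*Y)^{-1}$, the cross terms $N^{-1/2}(\overline{P_0}{\rm Tr}(Y^*Y\hat Z)+P_0{\rm Tr}(YY^*\hat Z^*))$ and, at $Y=0$, a $\hat Z$-linear piece which — combined with the value at the maximum of the $N^{-1/2}$-part of $h$ in \eqref{hQrewrite} and simplified by $P_{00}=1/\tau$ — becomes $\tfrac{N^{-1/2}}\tau(\overline{z_0}{\rm Tr}\hat Z+z_0{\rm Tr}\hat Z^*)$; at second order in $\widehat A_\alpha$ (at $Y=0$) it gives the $-\tfrac{N^{-1}}2\sum_\alpha\tfrac{c_\alpha}{f_\alpha^2}(\overline{z_0-a_\alpha}^2{\rm Tr}(\hat Z^2)+(z_0-a_\alpha)^2{\rm Tr}(\hat Z^*)^2)$ piece of $F_0$. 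Finally the elementary estimate $\log\det(\gamma_Nf_\alpha\mathbb{I}_n+YY^*)-\log\det(f_\alpha\mathbb{I}_n+YY^*)=n\log\gamma_N+O(N^{-1}\|Y\|^2)$ yields the $\tfrac{n^2}N$ in $F_0$ after $\sum c_\alpha=1$, and every remaining $Y$-monomial is of the sizes listed in $F_2$.

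For the $\{Q_0,Q_{t+1},T_\alpha\}$-block the changes of variables \eqref{prechange1}--\eqref{prechange2}, spelled out below as \eqref{Tdalpha1}, \eqref{matrix transformations1}, \eqref{matrix transformations2}, are the essential device. Writing $T_\alpha=\sqrt{T_{{\rm d},\alpha}}+T_{{\rm u},\alpha}$, setting $\widetilde T_{{\rm d},\alpha}=T_{{\rm d},\alpha}-\tfrac{\tau c_\alpha}{f_\alpha}\mathbb{I}_n$, and exploiting that the constraint $\sum_\alpha T_\alpha T_\alpha^*+Q_0Q_0^*+Q_{t+1}Q_{t+1}^*\le\mathbb{I}_n$ is saturated at the maximum (since $\sum_\alpha\tfrac{\tau c_\alpha}{f_\alpha}=1$), one eliminates the $\alpha=1$ diagonal and strictly-upper blocks in favor of the slack variables $\Gamma_1\ge0$ and $G_1$. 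Since $\log\det(T_\alpha T_\alpha^*)={\rm Tr}\log T_{{\rm d},\alpha}$ depends only on the diagonal, its second-order expansion produces the $-\tfrac{f_\alpha^2}{2\tau^2 c_\alpha}{\rm Tr}(\widetilde T_{{\rm d},\alpha}^2)$ terms (the $\alpha=1$ one turning into the $((Q_0Q_0^*)^{({\rm d})}+\sum_{\alpha\ge2}\widetilde T_{{\rm d},\alpha})^2$ term after substitution), while its first-order term cancels the matching linear term of $\tfrac1\tau h_0$; the diagonal part of $\tfrac1\tau h_0$ then supplies $\tfrac{|z_0|^2}\tau{\rm Tr}(\Gamma_1)$ (from $\tfrac{|z_0|^2}\tau\sum_\alpha{\rm Tr}(\widetilde T_{{\rm d},\alpha})$ after using the constraint), the $Q_{t+1}$-quadratic form, the $-\tfrac{f_\alpha}\tau{\rm Tr}(T_{{\rm u},\alpha}T_{{\rm u},\alpha}^*)$ terms, and the $\hat Z$-linear couplings with $\widetilde T_{{\rm d},\alpha}$ and $Q_0Q_0^*$; finally the off-diagonal sum $\sum_{i<j}|(\sum_\alpha a_\alpha T_\alpha T_\alpha^*+z_0Q_0Q_0^*+Q_{t+1}A_{t+1}Q_{t+1}^*)_{i,j}|^2$, after replacing $(T_\alpha T_\alpha^*)^{({\rm off},{\rm u})}$ by its leading part $\propto T_{{\rm u},\alpha}$ and removing $T_{{\rm u},1}$, produces the two strictly-upper-triangular quadratic forms of $F_1$ carrying the factors $(z_0-a_1)$ and $(a_\alpha-a_1)$. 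Everything else is cubic-or-higher in the small variables or carries a spare $N^{-1/2}$, hence $O(F_2)$ with $\Omega_{{\rm error}}$ as in \eqref{Omegaerror}; adding the two blocks, with no further $Y$-coupling beyond the cross terms already exhibited, gives \eqref{fTYexpanNon0}.

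I expect the main obstacle to be the bookkeeping of the triangular variables after the substitution $T_{{\rm u},1}\mapsto G_1$: the off-diagonal part $(T_\alpha T_\alpha^*)^{({\rm off},{\rm u})}$ mixes $T_{{\rm u},\alpha}$ with $\sqrt{T_{{\rm d},\alpha}}$ and with $(T_{{\rm u},\alpha}T_{{\rm u},\alpha}^*)^{({\rm off},{\rm u})}$, and it feeds simultaneously into the $-\tfrac{f_\alpha}\tau{\rm Tr}(T_{{\rm u},\alpha}T_{{\rm u},\alpha}^*)$ terms and into the $\sum_{i<j}|\cdots|^2$ term; checking that these recombine into exactly the two quadratic forms displayed in \eqref{f1Non0}, and that replacing $\sqrt{T_{{\rm d},\alpha}}$ by $\sqrt{\tau c_\alpha/f_\alpha}\,\mathbb{I}_n$ inside them and dropping the quadratic-in-$T_{\rm u}$ contributions to the diagonal constraint cost only $O(F_2)$, is the delicate step. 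A secondary point is to make all the $O(F_2)$ bounds uniform over compact sets of $\hat Z$ simultaneously with the $O(\sqrt\delta)$ smallness of the deviation variables, so that each discarded monomial is genuinely of the size claimed by $F_2$.
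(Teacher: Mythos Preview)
Your proposal is correct and follows essentially the same approach as the paper: decouple the $Y$-block from the $\{T_\alpha,Q_0,Q_{t+1}\}$-block, expand $\log\det(A_\alpha)$ via \eqref{logAalpha decompose} (getting the $P_0,P_1$ and $n^2/N$ contributions exactly as you describe), introduce $\widetilde T_{{\rm d},\alpha}$, $\Gamma_1$, $G_1$ via \eqref{Tdalpha1}, \eqref{matrix transformations1}, \eqref{matrix transformations2}, and then do the quadratic bookkeeping in the diagonal and strictly-upper-triangular parts separately, with all remainders controlled by $\Omega_{\rm error}$. One small slip: the slack variable satisfies $\Gamma_1+G_1+G_1^*\le 0$, so the diagonal $\Gamma_1$ is non\emph{positive}, not $\Gamma_1\ge 0$; this sign is what makes the term $\frac{|z_0|^2}{\tau}{\rm Tr}(\Gamma_1)$ in $F_1$ a genuine damping.
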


\begin{proposition} \label{gTaylor}

With $g(T,Y,Q)$  in \eqref{gYUT}, we have 
\begin{equation*} 
\begin{aligned}
g(T,Y,Q) &=
 \Big(\big(|z_0|^{2n}+O\big(
N^{-\frac{1}{2}}+\| Y \|
\big)\big)\det\big(
-\Gamma_1-G_1-G_1^*
\big)\Big)^{R_0-n}\\
& \times  \prod_{\alpha=1}^t\prod_{j=1}^n
\Big(
   \frac{\tau c_{\alpha}}{f_{\alpha}}
+O\big(\Omega_{\rm error}
\big)\Big)^{R_{\alpha,N}-j}. \prod_{\alpha=1}^t\Big(f_{\alpha}^n
+O\big(
N^{-\frac{1}{2}}+\| Y \|
\big)\Big)^{R_{\alpha,N}-n}\\
&\times 
\big| \det\big( z_0\mathbb{I}_{r_{t+1}}-A_{t+1} \big) \big|^{2n}
\Big(
\big(\det(YY^*)\big)^{r_0}
 +O\Big(
\sum_{\alpha,\beta}\| Y \|^{\alpha}
\Omega_{\rm error}^{\beta}
\Big)
\Big)
\\&\times\Big(
\tau^{2n^2}|z_0|^{2n^2}|P_0|^{2n^2}
\prod_{\alpha=1}^t
f_{\alpha}^{n^2}
 +O\big(
\Omega_{\rm error}+N^{-\frac{1}{2}}+\| Y \|
\big)
\Big)   
\end{aligned}
\end{equation*} 
where the finite non-negative integer pair $(\alpha,\beta)$ satisfies 
 \eqref{M error condition} and 
   $\Omega_{{\rm error}}$ is defined in  \eqref{Omegaerror}, in Section \ref{propproofs}.   \end{proposition}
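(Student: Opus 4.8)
The plan is to regard $g(T,Y,Q)$ as the product of the five determinantal factors on the right of \eqref{gYUT}, to insert the Step~1 change of variables (the centering $T_{{\rm d},\alpha}=\tfrac{\tau c_\alpha}{f_\alpha}\mathbb{I}_n+\widetilde T_{{\rm d},\alpha}$ together with \eqref{Tdalpha1}, \eqref{matrix transformations1} and \eqref{matrix transformations2}), and to expand each factor to first order about the maximum point \eqref{maximumpoint} in the small quantities $\|Y\|$ and $\Omega_{\rm error}$ (the latter defined in \eqref{Omegaerror}). Since each factor is a fixed power of a determinant that is analytic near that point, the only substantive work is to pin down the leading value of every determinant and to verify that all remainders collapse to the form $O\big(\sum_{\alpha,\beta}\|Y\|^{\alpha}\Omega_{\rm error}^{\beta}\big)$ with $(\alpha,\beta)$ satisfying \eqref{M error condition}.

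Four of the five factors I expect to be routine. For the second factor, $z_0\ne0$ makes $\sqrt{\gamma_N}Z^{*}$ invertible in the region considered, so the Schur complement formula gives $\det\!\big[\begin{smallmatrix}\sqrt{\gamma_N}Z&-Y^{*}\\ Y&\sqrt{\gamma_N}Z^{*}\end{smallmatrix}\big]=\gamma_N^{n}\,|\det Z|^{2}\big(1+O(\|Y\|^{2})\big)$, and $\det Z=z_0^{n}+O(N^{-1/2})$ yields the prefactor $|z_0|^{2n}+O(N^{-1/2}+\|Y\|)$. For the first factor, the hypothesis $P_{00}(z_0)=1/\tau$ means $\sum_\alpha\tau c_\alpha/f_\alpha=1$, so $\mathbb{I}_n-\sum_\alpha T_{{\rm d},\alpha}$ vanishes at the maximum point; writing $T_\alpha=\sqrt{T_{{\rm d},\alpha}}+T_{{\rm u},\alpha}$ and recalling that $\Gamma_1$, $G_1$ are precisely the diagonal and strictly upper triangular combinations of $\{\widetilde T_{{\rm d},\alpha}\}$, $\{T_{{\rm u},\alpha}\}$ and of the diagonal and strictly upper parts of $Q_0Q_0^{*}$, $Q_{t+1}Q_{t+1}^{*}$ prescribed by \eqref{matrix transformations1}--\eqref{matrix transformations2}, one gets $\mathbb{I}_n-\sum_\alpha T_\alpha T_\alpha^{*}-Q_0Q_0^{*}-Q_{t+1}Q_{t+1}^{*}=-\Gamma_1-G_1-G_1^{*}+O(\Omega_{\rm error}^{2})$, whence its $(R_0-n)$-th power. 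The third factor is immediate since $t_{j,j}^{(\alpha)}=\tau c_\alpha/f_\alpha+O(\Omega_{\rm error})$, and the fourth follows from the splitting \eqref{logAalpha decompose}, which gives $\det A_\alpha=\det(\gamma_N f_\alpha\mathbb{I}_n+YY^{*})\,\det(\mathbb{I}_{2n}+\sqrt{\gamma_N}N^{-1/2}\widehat A_\alpha)=f_\alpha^{n}+O(N^{-1/2}+\|Y\|)$.

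The heart of the matter, and the step I expect to be the main obstacle, is the fifth factor $\det(\widehat L_1+\sqrt{\gamma_N}\widehat L_2)$. The three ``coupling'' blocks of $\widehat L_2$ in \eqref{L0hat} (its top-right, bottom-left and bottom-right blocks) each carry a factor $\widetilde Q_{t+1}=[Q_0,Q_{t+1}]$, which vanishes at the maximum point, so at leading order the determinant factors into $\det(\widehat B_{t+1})$ times the determinant of the top-left $t\times t$ block-array (the $A_\alpha$-part), the coupling only feeding the $\Omega_{\rm error}$-remainder. Because $\widetilde A_{t+1}=\mathrm{diag}(z_0\mathbb{I}_{r_0},A_{t+1})$, the block $\widehat B_{t+1}$ itself decouples into an $r_0$-part and an $r_{t+1}$-part: in the $r_{t+1}$-part the diagonal $\sqrt{\gamma_N}(Z\otimes\mathbb{I}_{r_{t+1}}-\mathbb{I}_n\otimes A_{t+1})$ is invertible, since $z_0$ is not an eigenvalue of $A_{t+1}$, and contributes $|\det(z_0\mathbb{I}_{r_{t+1}}-A_{t+1})|^{2n}$ at leading order; in the $r_0$-part the diagonal is $\sqrt{\gamma_N}(Z-z_0\mathbb{I}_n)\otimes\mathbb{I}_{r_0}=\sqrt{\gamma_N}N^{-1/2}\hat Z\otimes\mathbb{I}_{r_0}=O(N^{-1/2})$, so the determinant is governed by the off-diagonal $\pm Y\otimes\mathbb{I}_{r_0}$ blocks and equals $\det(YY^{*})^{r_0}$ up to $O\big(\sum_{\alpha,\beta}\|Y\|^{\alpha}\Omega_{\rm error}^{\beta}\big)$.

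In the $A_\alpha$-part I would use that at the maximum point $T_\alpha^{*}T_\beta=\sqrt{\tau c_\alpha/f_\alpha}\sqrt{\tau c_\beta/f_\beta}\,\mathbb{I}_n$: after conjugating by a suitable permutation, the top-left $t\times t$ block-array of $\widehat L_1+\sqrt{\gamma_N}\widehat L_2$ splits, at $Y=0$ and the maximal value of the $T$'s, into two $tn^{2}\times tn^{2}$ sectors of the form $\mathrm{diag}_\alpha\!\big(\sqrt{\gamma_N}(Z-a_\alpha\mathbb{I}_n)\otimes\mathbb{I}_n\big)+\sqrt{\gamma_N}\,M_{\pm}\otimes\mathbb{I}_{n^{2}}$, where $M_{+}$ (resp.\ $M_{-}$) is the rank-one $t\times t$ matrix with $(\alpha,\beta)$ entry $\sqrt{\tau c_\alpha/f_\alpha}\,a_\alpha\,\sqrt{\tau c_\beta/f_\beta}$ (resp.\ with $a_\alpha$ replaced by $\overline{a_\alpha}$). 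Applying the matrix determinant (Sylvester) identity to this rank-$n^{2}$ update, the block-diagonal factor $\prod_\alpha(\det A_\alpha)^{n}$ gives $\prod_\alpha f_\alpha^{n^{2}}$ at leading order, while the reduced $n^{2}\times n^{2}$ determinant is $\det\!\Big(\mathbb{I}_{n^{2}}+\sum_\alpha\tfrac{\tau c_\alpha a_\alpha}{f_\alpha}\big((Z-a_\alpha\mathbb{I}_n)^{-1}\otimes\mathbb{I}_n\big)\Big)\to\big(1+\tau\sum_\alpha\tfrac{c_\alpha a_\alpha}{f_\alpha(z_0-a_\alpha)}\big)^{n^{2}}$ as $Z\to z_0\mathbb{I}_n$. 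The key algebraic identity is
\[
1+\tau\sum_{\alpha=1}^{t}\frac{c_\alpha a_\alpha}{f_\alpha(z_0-a_\alpha)}=-\tau z_0\,\overline{P_0},
\]
which follows by writing $a_\alpha=z_0-(z_0-a_\alpha)$, using $\sum_\alpha c_\alpha/f_\alpha=P_{00}(z_0)=1/\tau$, and recognising $\sum_\alpha c_\alpha\big/\big((z_0-a_\alpha)^{2}\,\overline{z_0-a_\alpha}\big)=-\overline{P_0}$ from \eqref{parameter}. Hence the ``$+$''-sector contributes $(-\tau z_0\overline{P_0})^{n^{2}}$, the ``$-$''-sector $(-\tau\overline{z_0}P_0)^{n^{2}}$, and their product $\tau^{2n^{2}}|z_0|^{2n^{2}}|P_0|^{2n^{2}}$, exactly as claimed. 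Reassembling the five factors and folding the deviations of $Y$, of $T_\alpha^{*}T_\beta$, of $\widetilde Q_{t+1}$ and of $Z-z_0\mathbb{I}_n$ into the stated remainders then gives the proposition. As indicated, the difficulty is entirely in this last factor: executing the block reduction of $\widehat L_1+\sqrt{\gamma_N}\widehat L_2$ cleanly, verifying the displayed identity, and confirming that every error term has the uniform form $O\big(\sum_{\alpha,\beta}\|Y\|^{\alpha}\Omega_{\rm error}^{\beta}\big)$ with $(\alpha,\beta)$ as in \eqref{M error condition}.
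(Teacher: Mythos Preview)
Your proposal is correct and follows essentially the same route as the paper: the four easy factors are handled exactly as you describe, and for the fifth factor the paper likewise peels off the $\widetilde Q_{t+1}$-couplings to factor $\det(\widehat L_1+\sqrt{\gamma_N}\widehat L_2)$ into $\det(P_{1,1})\det(\widehat B_{t+1}+O(\|\widetilde Q_{t+1}\|^{2}))$, applies the Sylvester/rank-update identity to $P_{1,1}$, and uses precisely your algebraic identity $1+\tau\sum_\alpha c_\alpha a_\alpha/(f_\alpha(z_0-a_\alpha))=-\tau z_0\overline{P_0}$ (written there as $1+\sum_\alpha\tau c_\alpha a_\alpha\overline{z_0-a_\alpha}/f_\alpha^{2}$). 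One small sharpening: by the very definitions \eqref{matrix transformations1}--\eqref{matrix transformations2} (which absorb the full $S^{(\rm d)}$ and $S^{({\rm off},{\rm u})}$), the identity $\mathbb{I}_n-\sum_\alpha T_\alpha T_\alpha^{*}-Q_0Q_0^{*}-Q_{t+1}Q_{t+1}^{*}=-(\Gamma_1+G_1+G_1^{*})$ is \emph{exact}, not merely up to $O(\Omega_{\rm error}^{2})$, which is why no error term appears on that factor in the statement.
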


\begin{proof}[Proof of Theorem   \ref{2-complex-correlation}: $z_0\neq 0$]
We proceed in three steps. 

{\bf Step 1: Taylor expansions of $f(T,Y,Q)$} and  $g(T,Y,Q)$.

For  $z_0\not=0$,  by Proposition \ref{RNn delta}  we can  restrict all the matrix variables in the region $\Omega_{N,\delta}$ defined in \eqref{regiondelta} for sufficiently small $\delta>0$. Hence, we can do Taylor expansions of the relevant matrix variables   as stated  in  Proposition \ref{fTaylor}  and Proposition \ref{gTaylor}.

{\bf Step 2: Removal of error terms}.

 
Recalling \eqref{INdelta}, \eqref{Jacobiandet} and \eqref{fTYexpanNon0}, we rewrite
 \begin{equation}\label{INdeltaexpan}
e^{-NF_0}I_{N,\delta}=J_{1,N}+J_{2,N},
\end{equation}
 where 
 \begin{equation*}
J_{1,N}=\Big(
\frac{f_{1}}{\tau c_{1}}
\Big)^{\frac{n(n-1)}{2}}\int_{\hat{\Omega}_{N,\delta}}
g(T,Y,Q)\big(1+O\big(
\Omega_{\rm error}
\big)\big)e^{NF_1}
{\rm d}\widetilde{V},
\end{equation*}
and
 \begin{equation*}
J_{2,N}=\Big(
\frac{f_{1}}{\tau c_{1}}
\Big)^{\frac{n(n-1)}{2}}\int_{\hat{\Omega}_{N,\delta}}
g(T,Y,Q)e^{NF_1}\big(
e^{O(NF_2)}-1
\big)\big(1+O\big(
\Omega_{\rm error}
\big)\big)
{\rm d}\widetilde{V},
\end{equation*}
with
\begin{equation*}
{\rm d}\widetilde{V}={\rm d}Y
{\rm d}Q_0{\rm d}Q_{t+1}{\rm d}\Gamma_1
{\rm d}G_1\prod_{\alpha=2}^t {\rm d}\widetilde{T}_{{\rm d},\alpha}
{\rm d}T_{{\rm u},\alpha}
\end{equation*}
and
\begin{equation*}
\hat{\Omega}_{N,\delta}=A_{N,\delta}\cap \big\{
\Gamma_1+G_1+G_1^*\leq 0
\big\},
\end{equation*}
 cf.  \eqref{ANdelta} for definition of $A_{N,\delta}$.
 
Under  the restriction condition  of $\Gamma_1+G_1+G_1^* \leq 0$,  every   principal minor  of order 2 is non-positive definite, so all diagonal entries of $\Gamma_1$  are zero or negative and  
\begin{equation}\label{g2hatU1 estimation}
{\rm Tr}\big(
G_1G_1^*
\big)=\sum_{1\leq i<j\leq n}\big| (G_1)_{i,j} \big|^2
\leq  \sum_{1\leq i<j\leq n} (\Gamma_1)_{i,i} (\Gamma_1)_{j,j}
\leq  
\big(  {\rm Tr}
(\Gamma_1)
\big)^2.
\end{equation}
Noticing the absence of $ G_1$ in $F_1$ given  in \eqref{f1Non0}, for convergence we need to control  $G_1$ by $
\Gamma_1$  in $F_2$,  just as shown in  \eqref{g2hatU1 estimation}. Since  $\| G_1 \|= O(\| \Gamma_1 \|)$ from   \eqref{g2hatU1 estimation}, also from \eqref{regiondelta} and \eqref{ANdelta} for sufficiently large $N$ and small $\delta$, there exists some $C>0$ such that 
  \begin{small}
\begin{equation*}
\begin{aligned}
&\frac{1}{C} NF_2 \leq N^{-\frac{1}{2}}+\sqrt{\delta}+
\sqrt{N\delta} \big(
\| Y \|^2+\|\widetilde{T}_{{\rm d}} \|_2
+\| Q_0 \|^2+\| Q_{t+1} \|+\|T_{{\rm u}} \|_2
\big)            \\
&+N\sqrt{\delta}\big(
\| Q_0 \|^4
+\|\widetilde{T}_{{\rm d}} \|_2^2+\|T_{{\rm u}} \|_2^2
+\| Y \|^4+\| Q_{t+1} \|^2+\| \Gamma_1 \|
\big).
\end{aligned}
\end{equation*}
\end{small}

 Using the inequality \begin{equation}\label{J2N inequ}
\big|
e^{O(NF_2)}-1
\big|\leq O(N|F_2|)e^{O(N|F_2|)},
\end{equation}
 after change of  variables
\begin{equation}\label{change scale}
\begin{aligned}
&(G_1,\Gamma_1)\rightarrow N^{-1}(G_1,\Gamma_1),\quad
(Q_0,Y)\rightarrow N^{-\frac{1}{4}}(Q_0,Y),\quad
Q_{t+1} \rightarrow N^{-\frac{1}{2}}Q_{t+1},
\\
&\big(
\widetilde{T}_{{\rm d},\alpha},T_{{\rm u},\alpha}
\big)\rightarrow N^{-\frac{1}{2}} 
\big(
\widetilde{T}_{{\rm d},\alpha},T_{{\rm u},\alpha}
\big)
\quad \alpha=2,\cdots,t,
\end{aligned}
\end{equation}
the term $O(NF_2)$ in \eqref{J2N inequ} has an upper bound  by   $N^{-\frac{1}{4}}P(\tilde{V})$ for some polynomial of variables.  
since $F_1$ can control $F_2$  for sufficiently  small $\delta$, by the argument of  Laplace method and the dominant convergence theorem we know that 
$J_{2,N}$ is typically of order  $N^{-\frac{1}{4}}$ compared with $J_{1,N}$, that is,  
 \begin{equation}\label{J2Nestimation}
J_{2,N}=O\big(
N^{-\frac{1}{4}}
\big)J_{1,N}.
\end{equation}
For $J_{1,N}$,   
take  a large  $M_0>0$ such that
\begin{small}
\begin{equation*}
\begin{aligned}
A_{N,\delta}^{\complement}\subseteq  
&   \bigcup_{\alpha=2}^t
\Big\{
{\rm Tr}\big(
T_{{\rm u},\alpha}T_{{\rm u},\alpha}^*
\big)>\frac{\delta}{M_0}
\Big\}
\bigcup
\bigcup_{\alpha=2}^t
\Big\{
{\rm Tr}\big(
\widetilde{T}_{{\rm d},\alpha}\widetilde{T}_{{\rm d},\alpha}^*
\big)>\frac{\delta}{M_0}
\Big\} \bigcup \Big\{{\rm Tr}\big(
\Gamma_{1}\Gamma_{1}^*
\big)>\frac{\delta}{M_0}\Big\}
        \\
&\bigcup \Big\{{\rm Tr}\big(
Q_0Q_0^*
\big)>\frac{\delta}{M_0}\Big\} 
 \bigcup
\Big\{
{\rm Tr}\big(
YY^*
\big)>\frac{\delta}{M_0}
\Big\}
\bigcup
\Big\{
{\rm Tr}\big(
Q_{t+1}Q_{t+1}^*
\big)>\frac{\delta}{M_0}
\Big\}.
\end{aligned}
\end{equation*}
\end{small}
Here we have used \eqref{g2hatU1 estimation} to drop out the domain   
$ \{{\rm Tr}\big(
G_1G_1^*
\big)>\delta/M_0\}$.   For each piece of  domain,  only  keep  the restricted matrix variable and let  the others free,  it's easy to prove that the corresponding  matrix integral  is exponentially  small, that is,  $O\big( e^{-\delta_1 N} \big)
$
for some $\delta_1>0$.  

So we can extend the integration  region from $\hat{\Omega}_{N,\delta}$ to  $\big\{ \Gamma_1+G_1+G_1^* \leq 0 \big\}$. From \eqref{ralpha N} and \eqref{calpha sum}, we have $\sum_{\alpha=1}^t R_{\alpha,N}=-r_0-r_{t+1}-R_0$, and by  the change of  variables    
\eqref{change scale} we have
\begin{equation}\label{J1Nform}
\begin{aligned}
J_{1,N}&=N^{-\frac{n^2 t}{2}-n(R_0+r_0+r_{t+1})}
\Big(
\frac{f_1}{\tau c_1}
\Big)^{\frac{n(n-1)}{2}}
|z_0|^{2nR_0}|P_0|^{2n^2}
\tau^{2n^2-n(r_0+r_{t+1}+R_0)-\frac{n(n+1)}{2}t }
           \\
&\times \Big|
\det \big(
z_0\mathbb{I}_{r_{t+1}}-A_{t+1}
\big)
\Big|^{2n}
\prod_{\alpha=1}^t c_{\alpha}^{nR_{\alpha,N}-\frac{n(n+1)}{2}}
f_{\alpha}^{\frac{n(n+1)}{2}}\Big(
I_0+O\big(
N^{-\frac{1}{4}}
\big)
\Big),
\end{aligned}
\end{equation}
where with $ \widehat{H}=\Gamma_1+G_1+G_1^*$,
\begin{equation}\label{I0}
I_0=\int_{\widehat{H}\leq 0}\big(
\det(-\widehat{H})
\big)^{R_0-n}
\big(
\det(YY^*)
\big)^{r_0}
e^F
{\rm d}\widetilde{V},
\end{equation}
\begin{small}
\begin{equation*}
\begin{aligned}
&F=\frac{|z_0|^2}{\tau}{\rm Tr}(\Gamma_1)-\frac{1}{\tau}{\rm Tr}Q_{t+1}
\big(
z_0\mathbb{I}_{r_{t+1}}-A_{t+1}
\big)^*
\big(
z_0\mathbb{I}_{r_{t+1}}-A_{t+1}
\big)
Q_{t+1}^*  
-\sum_{\alpha=2}^t\frac{f_{\alpha}^2}{2\tau^2 c_{\alpha}}
{\rm Tr}\big( \widetilde{T}_{{\rm d},\alpha}^2 \big)     \\
&-\frac{f_{1}^2}{2\tau^2 c_{1}}{\rm Tr}
\Big(
 (Q_0Q_0^*)^{({\rm d})}+\sum_{\alpha=2}^t
\widetilde{T}_{{\rm d},\alpha}
 \Big)^2 
 -\frac{1}{\tau}{\rm Tr}\Big(
\big(
\overline{a}_1\hat{Z}+a_1\hat{Z}^*
\big)\big(
(Q_0Q_0^*)^{({\rm d})}+\sum_{\alpha=2}^t \widetilde{T}_{{\rm d},\alpha}
\big)
\Big)       \\
&+\frac{1}{\tau}\sum_{\alpha=2}^t {\rm Tr}
\Big(\big(
\overline{a}_{\alpha}\hat{Z}+a_{\alpha}\hat{Z}^*
\big)\widetilde{T}_{{\rm d},\alpha}\Big)+
\frac{1}{\tau}\Big(
\overline{z}_0{\rm Tr}\big(
\hat{Z} Q_0Q_0^*
\big)+z_0{\rm Tr}\big(
\hat{Z}^* Q_0Q_0^*
\big) 
  \Big)     \\
&-\frac{f_1^2}{\tau^2 c_1}
 {\rm Tr}\Big(
 (Q_0Q_0^*)^{({\rm off},{\rm u})}+\sum_{\alpha=2}^t
 \sqrt{\frac{\tau c_{\alpha}}{f_{\alpha}}}T_{{\rm u},\alpha}
 \Big)
 \Big(
(Q_0Q_0^*)^{({\rm off},{\rm u})}+\sum_{\alpha=2}^t
 \sqrt{\frac{\tau c_{\alpha}}{f_{\alpha}}}T_{{\rm u},\alpha}
 \Big)^*  \\
 &-\sum_{\alpha=2}^t \frac{1}{\tau}
f_{\alpha}{\rm Tr}\big(
T_{{\rm u},\alpha}T_{{\rm u},\alpha}^*
\big) + \frac{1}{\tau}\sum_{1\leq i < j\leq n}
 \left|
(z_0-a_1) (Q_0Q_0^*)^{({\rm off},{\rm u})}_{i,j}
+\sum_{\alpha=2}^t
\sqrt{\frac{\tau c_{\alpha}}{f_{\alpha}}}
(a_{\alpha}-a_1)
t_{i,j}^{(\alpha)}
 \right|^2 \\
 &-\frac{P_1}{2} {\rm Tr}(YY^*)^2 
+
\overline{P_0}{\rm Tr}\big(
Y^*Y\hat{Z}
\big)+P_0{\rm Tr}\big(
YY^*\hat{Z}^*
\big)
\end{aligned}
\end{equation*}
\end{small}

{\bf Step 3: Matrix integrals and final proof.} 

To simplify $I_0$ further, we first integrate out $Q_{t+1},T_{{\rm u},2},\cdots,T_{{\rm u},t}$,
\begin{equation*}
\int
e^{-\frac{1}{\tau}{\rm Tr}Q_{t+1}
\big(
z_0\mathbb{I}_{r_{t+1}}-A_{t+1}
\big)^*
\big(
z_0\mathbb{I}_{r_{t+1}}-A_{t+1}
\big)
Q_{t+1}^* }{\rm d}Q_{t+1}        
=
 \Big|
\det \big(
z_0\mathbb{I}_{r_{t+1}}-A_{t+1}
\big)
\Big|^{-2n}
(\pi\tau)^{n r_{t+1}}.
\end{equation*}
From Proposition \ref{matrixintegral2}, we have
\begin{small}
\begin{equation*}
\begin{aligned}
&\int
\exp\Big\{
-\frac{f_1^2}{\tau^2 c_1}
 {\rm Tr}\Big(
 (Q_0Q_0^*)^{({\rm off},{\rm u})}+\sum_{\alpha=2}^t
 \sqrt{\frac{\tau c_{\alpha}}{f_{\alpha}}}T_{{\rm u},\alpha}
 \Big)
 \Big(
(Q_0Q_0^*)^{({\rm off},{\rm u})}+\sum_{\alpha=2}^t
 \sqrt{\frac{\tau c_{\alpha}}{f_{\alpha}}}T_{{\rm u},\alpha}
 \Big)^*
 \\
&-\sum_{\alpha=2}^t
\frac{1}{\tau} f_{\alpha}{\rm Tr}\big(
T_{{\rm u},\alpha}T_{{\rm u},\alpha}^*
\big) +\frac{1}{\tau}\sum_{1\leq i < j\leq n}
 \left|
(z_0-a_1) (Q_0Q_0^*)^{({\rm off},{\rm u})}_{i,j}
+\sum_{\alpha=2}^t
\sqrt{\frac{\tau c_{\alpha}}{f_{\alpha}}}
(a_{\alpha}-a_1)
t_{i,j}^{(\alpha)}
 \right|^2
 \Big\}
 \\
 &\times
 \prod_{\alpha=2}^t {\rm d}T_{{\rm u},\alpha}    
 =
\tau^{\frac{n(n-1)}{2}(t-2)} 
 \Big(
\frac{f_1}{c_1}
\big( \prod_{\alpha=1}^t f_{\alpha} \big)
|P_0|^2
 \Big)^{-\frac{n(n-1)}{2}}
 \pi^{\frac{n(n-1)}{2}(t-1)},
\end{aligned}
\end{equation*}
\end{small}
which is independent of $Q_0$. 

Secondly, to integrate out $G_1$ and $\Gamma_1$, by Proposition \ref{matrixintegral1}, we have
\begin{small}
\begin{equation*}
\int_{\widehat{H}\leq 0}\big(
\det(-\widehat{H})
\big)^{R_0-n}{\rm d}G_1
=(-1)^{n(R_0-n)}\pi^{\frac{n(n-1)}{2}}
\frac{\prod_{i=1}^n(R_0-i)!\big( \det(\Gamma_1) \big)^{R_0-1}}
{\big((R_0-1)!\big)^n},
\end{equation*}
\end{small}
from which
\begin{small}
\begin{equation}\label{Gamma integral}
(-1)^{n(R_0-n)}\int_{\Gamma_1\leq 0}
\big( \det(\Gamma_1) \big)^{R_0-1}
e^{\frac{1}{\tau}|z_0|^2{\rm Tr}(\Gamma_1)}{\rm d}\Gamma_1
=\tau^{nR_0}|z_0|^{-2nR_0}\big((R_0-1)!\big)^n.
\end{equation}
\end{small}
Thirdly, we need to integrate out $\widetilde{T}_{{\rm d},\alpha}$ for $\alpha=2,\cdots,t$. For $i=1,\cdots,n$, let
\begin{equation*}
\vec{b}_i=\big(
b_i^{(2)},b_i^{(3)},\cdots,b_i^{(t)}
\big),\quad
b_i^{(\alpha)}=
\Big(
\overline{a_{\alpha}-a_1}\hat{Z}
+(a_{\alpha}-a_1)\hat{Z}^*
-\frac{f_1^2}{\tau c_1}
 Q_0Q_0^*
 \Big)_{i,i},
\end{equation*}
and introduce a square matrix
\begin{equation*} 
\Sigma=[\Sigma_{\alpha,\beta}]_{\alpha,\beta=2}^t,  \quad \Sigma_{\alpha,\beta}=\frac{f_{\alpha}^2}{2c_{\alpha}}
\delta_{\alpha,\beta}+\frac{f_{1}^2}{2c_{1}},
\end{equation*} 
Noting
\begin{equation}\label{Ainverse critical non0}
\det(\Sigma)=2P_1\prod_{\alpha=1}^t \frac{f_{\alpha}^2}{2c_{\alpha}}
,\quad
(\Sigma^{-1})_{\alpha,\beta}=
\frac{2c_{\alpha}\delta_{\alpha,\beta}}{f_{\alpha}^2}
-\frac{2c_{\alpha}c_{\beta}}{f_{\alpha}^2f_{\beta}^2P_1},
\quad
\alpha,\beta=2,\cdots,t,
\end{equation} 
where $P_1$ is given in \eqref{parameter2}. Calculate the gaussian integrals and we obtain
\begin{equation}\label{Ualpha integral}
\begin{aligned}
&\int \exp
\Big\{
-\frac{f_{1}^2}{2\tau^2 c_{1}}{\rm Tr}
\big(
 (Q_0Q_0^*)^{({\rm d})}+\sum_{\alpha=2}^t
 \widetilde{T}_{{\rm d},\alpha}
 \big)^2
  -\sum_{\alpha=2}^t\frac{f_{\alpha}^2}{2\tau^2 c_{\alpha}}
  {\rm Tr} \big( \widetilde{T}_{{\rm d},\alpha}^2  \big)
  \\&     
+\frac{1}{\tau}\sum_{\alpha=2}^t{\rm Tr}\Big(
\big(
\overline{a_{\alpha}-a_1}\hat{Z}+
(a_{\alpha}-a_1)\hat{Z}^*
\big)
\widetilde{T}_{{\rm d},\alpha}
\Big)\Big\}
\prod_{\alpha=2}^t{\rm d}\widetilde{T}_{{\rm d},\alpha} \\
&=
\Big(
  2P_1\prod_{\alpha=1}^t \frac{f_{\alpha}^2}{2c_{\alpha}} 
  \Big)
 ^{-\frac{n}{2}}
 \pi^{\frac{t-1}{2}n}
 \tau^{(t-1)n}
e^{-\frac{f_{1}^2}{2\tau^2 c_{1}}{\rm Tr}\big(
 (Q_0Q_0^*)^{({\rm d})}
 \big)^2
 +\frac{1}{4}\sum_{i=1}^n 
 \vec{b}_i \Sigma^{-1}\vec{b}_i^t}
\end{aligned}
\end{equation}
Obviously,
\begin{small}
\begin{equation*}
\begin{aligned}
\sum_{i=1}^n\sum_{\alpha=2}^t
 \frac{c_{\alpha}\big(
 b_i^{(\alpha)}
\big)^2 }
 {f_{\alpha}^2}&=
\sum_{\alpha=2}^t
 \frac{c_{\alpha} }
 {f_{\alpha}^2}{\rm Tr}\Big(
 \big(\overline{a}_{\alpha}\hat{Z}+
 a_{\alpha}\hat{Z}^*\big)^2-2\big(\overline{a}_{\alpha}\hat{Z}+
 a_{\alpha}\hat{Z}^*\big)  \\
 &
 \times\big( \overline{a}_{1}\hat{Z}+
 a_{1}\hat{Z}^*+\frac{f_1^2}{\tau c_1}(Q_0Q_0^*)^{({\rm d})} \big)
 +\big( \overline{a}_{1}\hat{Z}+
 a_{1}\hat{Z}^*+\frac{f_1^2}{\tau c_1}(Q_0Q_0^*)^{({\rm d})} \big)^2
 \Big), 
\end{aligned}
\end{equation*}
\end{small}
\begin{small}
\begin{equation*}
\begin{aligned}
\sum_{i=1}^n
\Big(\sum_{\alpha=2}^t
 \frac{c_{\alpha}
 b_i^{(\alpha)} }
 {f_{\alpha}^2}\Big)^2=
{\rm Tr}\Big(
\sum_{\alpha=1}^t
 \frac{c_{\alpha}}{f_{\alpha}^2}\big(\overline{a}_{\alpha}\hat{Z}+
 a_{\alpha}\hat{Z}^*\big) -P_1\big( \overline{a}_{1}\hat{Z}+
 a_{1}\hat{Z}^*+\frac{f_1^2}{\tau c_1}(Q_0Q_0^*)^{({\rm d})} \big)
 +\frac{1}{\tau}(Q_0Q_0^*)^{({\rm d})}\Big)^2.
 \end{aligned}
\end{equation*}
\end{small}
Use \eqref{Ainverse critical non0}, elementary calculation gives us
\begin{equation*}
\frac{1}{4}
 \vec{b}_i \Sigma^{-1}\vec{b}_i^t
 =
\frac{1}{2}\sum_{\alpha=2}^t
 \frac{c_{\alpha}\big(
 b_i^{(\alpha)}
\big)^2 }
 {f_{\alpha}^2}-\frac{1}{2P_1}
 \Big(\sum_{\alpha=2}^t
 \frac{c_{\alpha}
 b_i^{(\alpha)} }
 {f_{\alpha}^2}\Big)^2,
\end{equation*} 
and
\begin{small}
\begin{equation}\label{diagonal simplfy}
\begin{aligned}
-\frac{f_{1}^2}{2\tau^2 c_{1}}{\rm Tr}
&\big(
 (Q_0Q_0^*)^{({\rm d})}
 \big)^2
 +\frac{1}{\tau}{\rm Tr}\Big(
 \big(
 \overline{z_0-a_1} \hat{Z}+(z_0-a_1)\hat{Z}^*
 \big)(Q_0Q_0^*)^{({\rm d})}
 \Big)
 +\frac{1}{4}\sum_{i=1}^n 
 \vec{b}_i \Sigma^{-1}\vec{b}_i^t     \\
 =-\frac{1}{2P_1}
 &{\rm Tr}
 \Big(
\sum_{\alpha=1}^t   \frac{c_{\alpha}}{f_{\alpha}^2}
\big(\overline{a}_{\alpha}\hat{Z}+
 a_{\alpha}\hat{Z}^*\big)
 \Big)^2
 +\frac{1}{2}\sum_{\alpha=1}^t
 \frac{c_{\alpha}}
 {f_{\alpha}^2}{\rm Tr}\big(\overline{a}_{\alpha}\hat{Z}+
 a_{\alpha}\hat{Z}^*\big)^2
 \\&
 -\sum_{i=1}^n\Big(
 \frac{1}{2P_1\tau^2}\big( (Q_0Q_0^*)^{({\rm d})}_{i,i} \big)^2
 -\frac{1}{P_1\tau}(Q_0Q_0^*)^{({\rm d})}_{i,i}
 \big(
 \overline{P}_0\hat{z}_i+P_0
 \overline{\hat{z}_i}
 \big)
 \Big),
\end{aligned}
\end{equation}
\end{small}

Let $Q_0=[q_{i,j}^{(0)}]$, and apply the spherical coordinates to $(Q_0Q_0^*)^{({\rm d})}_{i,i}$:
\begin{equation*}
\big(
q_{i,1}^{(0)},\cdots,q_{i,r_0}^{(0)}
\big)=\sqrt{q_i}\vec{u}_i,
\quad
\| \vec{u}_i \|=1,
\end{equation*}
with Jacobian
\begin{equation*}
\prod_{j=1}^{r_0}{\rm d}q_{i,j}^{(0)}=
\frac{\pi^{r_0}}{\Gamma(r_0)}q_i^{r_0-1}{\rm d}q_i{\rm d}\vec{u}_i,
\end{equation*}
where ${\rm d}\vec{u}_i$ is the Haar measure on the sphere $S^{r_0}.$ Then
\begin{small}
\begin{equation}\label{erfc integral}
\begin{aligned}
&\prod_{i=1}^n\int 
e^{-\frac{q_i^2}{2\tau^2 P_1}-\frac{q_i}{P_1\tau}\big(
\overline{P}_0\hat{z}_i+P_0\overline{\hat{z}_i}
\big)}\prod_{j=1}^{r_0}
{\rm d}q_{i,j}^{(0)}     
=\frac{(\pi\tau)^{n r_0}}{\big( \Gamma(r_0) \big)^n}
\prod_{i=1}^n\int_0^{+\infty}x^{r_0-1}
e^{-\frac{x^2}{2P_1}-\frac{x}{P_1}\big(
\overline{P}_0\hat{z}_i+P_0\overline{\hat{z}_i}
\big)}{\rm d}x.
\end{aligned}
\end{equation}
\end{small}

Finally, with \eqref{norm-1} and \eqref{DNn} in mind, by the Stirling's formula we can obtain
\begin{small}
\begin{equation}\label{eNf0 DNn}
\begin{aligned}
&e^{NF_0}D_{N,n}=\frac
{N^{\frac{n^2t}{2}+n(R_0+1+r_0+r_{t+1})}}
{\pi^{n(n+1+r_0+r_{t+1})+\frac{n(n-1)}{2}t}}
(2\pi)^{-\frac{n t}{2}}
\frac{\prod_{\alpha=1}^t c_{\alpha}^{-n R_{\alpha,N}+\frac{n^2}{2}}}
{\prod_{k=1}^n (R_0-k)!}\tau^{-\frac{n(n+1)}{2}}      \\
&\times \prod_{1\leq i<j\leq n}
\big| \hat{z}_i-\hat{z}_j \big|^2
e^{-\frac{1}{\tau}{\rm Tr}\big( \hat{Z}\hat{Z}^* \big)
-\frac{1}{2}\sum_{\alpha=1}^t  
\frac{c_{\alpha}}{f_{\alpha}^2}\Big(
\overline{z_0-a_{\alpha}}^2
{\rm Tr}\big( \hat{Z}^2 \big)+(z_0-a_{\alpha})^2
{\rm Tr}\big( \hat{Z}^* \big)^2
\Big)}
\big( 1+O(N^{-1}) \big).
\end{aligned}
\end{equation}
\end{small}
In view of \eqref{parameter2}, rewrite
\begin{equation*}
\begin{aligned}
&\overline{a}_{\alpha}\hat{Z}+a_{\alpha}\hat{Z}^*
=\overline{a_{\alpha}-z_0}\hat{Z}+(a_{\alpha}-z_0)\hat{Z}^*
+\overline{z}_0\hat{Z}+z_0\hat{Z}^*,
\\
&\sum_{\alpha=1}^t \frac{c_{\alpha}}{f_{\alpha}^2}
\big( \overline{a}_{\alpha}\hat{Z}+a_{\alpha}\hat{Z}^* \big)
=P_1\big( \overline{z}_0\hat{Z}+z_0\hat{Z}^* \big)
+\overline{P}_0\hat{Z}+P_0\hat{Z}^*,
\end{aligned}
\end{equation*}
expand the square of above, we have
\begin{small}
\begin{equation}\label{ZW transform}
\begin{aligned}
&
-\frac{1}{2}\sum_{\alpha=1}^t  
\frac{c_{\alpha}}{f_{\alpha}^2}
\Big(
\overline{z_0-a_{\alpha}}^2
{\rm Tr}\big( \hat{Z}^2 \big)+(z_0-a_{\alpha})^2
{\rm Tr}\big( \hat{Z}^* \big)^2
\Big) -\frac{1}{2P_1}{\rm Tr}\Big(
\sum_{\alpha=1}^t   \frac{c_{\alpha}}{f_{\alpha}^2}
\big( \overline{a}_{\alpha}\hat{Z}+a_{\alpha}\hat{Z}^* \big) \Big)^2
\\  
&
 -\frac{1}{\tau}{\rm Tr}\big( \hat{Z}\hat{Z}^* \big)+\frac{1}{2}\sum_{\alpha=1}^t
 \frac{c_{\alpha}}
 {f_{\alpha}^2}{\rm Tr}\big( \big( \overline{a}_{\alpha}\hat{Z}+a_{\alpha}\hat{Z}^* \big)^2 \big)
 =-\frac{1}{2P_1}
{\rm Tr}\big( \overline{P}_0\hat{Z}+P_0\hat{Z}^* \big)^2.
\end{aligned}
\end{equation}
\end{small}
Changing variables like $\hat{Z}\rightarrow -\frac{\sqrt{P_1}}{\overline{P}_0}\hat{Z}$, $x\rightarrow \sqrt{P_1}x$ in \eqref{erfc integral}, $Y\rightarrow P_1^{-\frac{1}{4}}Y$ in \eqref{I0}, and combining
 \eqref{RNndelta}, \eqref{INdeltaexpan}, \eqref{J2Nestimation}, \eqref{J1Nform}-\eqref{Gamma integral}, \eqref{Ualpha integral}, \eqref{diagonal simplfy}, \eqref{erfc integral}, \eqref{eNf0 DNn} and \eqref{ZW transform}, we can get
\begin{small}
\begin{equation*}
\begin{aligned}
&\frac{1}{\big( N|P_0|^2P_1^{-1} \big)^n}
R_N^{(n)}\big(
A_0;z_0-\frac{\sqrt{P_1}\hat{z}_1}{\overline{P_0}\sqrt{N}}
,\cdots,
z_0-\frac{\sqrt{P_1}\hat{z}_n}{\overline{P_0}\sqrt{N}}
\big)
\\
&
=\frac{\prod_{1\leq i<j\leq n}\big|\hat{z}_i-\hat{z}_j\big|^2}
{\pi^{n^2+n}}
e^{-\frac{1}{2}\sum_{i=1}^n\big( \hat{z}_i+\overline{\hat{z}}_i \big)^2}
\prod_{i=1}^n \mathrm{IE}_{r_0-1}\big( -\hat{z}_i-\overline{\hat{z}_i} \big)
\\
&\times \int_{\mathbb{C}^{n^2}}
\big(
\det(YY^*)
\big)^{r_0}
\exp\Big\{
-\frac{1}{2} {\rm Tr}(YY^*)^2 
-
{\rm Tr}\big(
Y^*Y\hat{Z}
\big)-{\rm Tr}\big(
YY^*\hat{Z}^*
\big)
\Big\}{\rm d}Y+O\big(
N^{-\frac{1}{4}}
\big).
\end{aligned}
\end{equation*}
\end{small}
where $\mathrm{IE}_{n}(z)$ is defined in \eqref{IEF}. By the singular value decomposition:
\begin{small}
\begin{equation}\label{singularcorre}
Y=U\sqrt{R}V,\quad
R={\rm diag}\left( r_1,\cdots,r_n \right),\quad
r_1\geq \cdots \geq r_n \geq 0,
\end{equation}
\end{small}
 the Jacobian reads 
\begin{equation}\label{singularjacobian}
{\rm d}Y=\pi^{n^2}\Big( \prod_{i=1}^{n-1}i! \Big)^{-2}
\prod_{1\leq i<j\leq n}(r_{j}-r_{i})^2 
{\rm d}R{\rm d}U{\rm d}V,
\end{equation}
where 
$U$ and $ V$ are chosen from the unitary group  $\mathcal{U}(n)$ with the Haar measure. 
 Use  Harish-Chandra-Itzykson-Zuber integration formula  \cite{HC,IZ}  (see also \cite{Me})
\begin{equation}\label{HCIZ}
\int_{\mathcal{U}(n)}\exp\!\left\{
{\rm Tr}\left( AUBU^* \right)
\right\}{\rm d}U=
\frac{\det\big( [ e^{a_i b_j} ]_{i,j=1}^n \big)}
{\prod_{1\leq i<j\leq n}(a_{j}-a_{i}) (b_{j}-b_{i})} \prod_{i=1}^{n-1}i!,
\end{equation}
with  $A={\rm diag}\left( a_1,\cdots,a_n \right)$ and  $B={\rm diag}\left( b_1,\cdots,b_n \right)$, and also
Andr\'{e}ief's   integration formula \cite{And}  
\begin{small}
\begin{equation}\label{Cauchybinet}
 \int  \det\big( [ f_i(x_j) ]_{i,j=1}^n \big)
\det\big( [ g_i(x_j) ]_{i,j=1}^n \big) \prod_{i=1}^n{\rm d}\mu(x_i)=
n!\det\Big( \Big[ \int f_i(x)g_j(x){\rm d}\mu(x) \Big]_{i,j=1}^n \Big),
\end{equation}
\end{small}
we obtain 
\begin{small}
\begin{equation*}
\begin{aligned}
\frac{\prod_{1\leq i<j\leq n}\big|\hat{z}_i-\hat{z}_j\big|^2}
{\pi^{n^2+n}}
\int_{\mathbb{C}^{n^2}}
&\big(
\det(YY^*)
\big)^{r_0}
\exp\Big\{
-\frac{1}{2} {\rm Tr}(YY^*)^2 
-
{\rm Tr}\big(
Y^*Y\hat{Z}
\big)-{\rm Tr}\big(
YY^*\hat{Z}^*
\big)
\Big\}{\rm d}Y
 \\
&=
\det\left( \Big[
\sqrt{\frac{2}{\pi}}
\Gamma(r_0+1) 
e^{\frac{1}{2}
\big(
\hat{z}_i+\overline{\hat{z}}_j
 \big)^2}
\mathrm{IE}_{r_0}\left( 
\hat{z}_i+\overline{\hat{z}}_j
\right)
\Big]_{i,j=1}^n
\right).
\end{aligned}
\end{equation*}
\end{small}
 
After Step 1, Step 2 and Step 3,  we thus  give a complete proof for $z_0\not=0$ in Theorem \ref{2-complex-correlation}.
\end{proof}

\section{Proofs  of Propositions \ref{fTaylor}, \ref{gTaylor} and Theorem \ref{2-complex-correlation}: $z_0=0$}\label{propproofs}

 \subsection{Proofs of Propositions \ref{fTaylor} and \ref{gTaylor}}
\begin{proof}[Proof of Proposition \ref{fTaylor}]
 With the notation $f_{\alpha}$ in \eqref{falpha},    
 noticing the  integration domain   due to \eqref{SYUT}    the decomposition of     $T_{\alpha}$   as  sum of  a diagonal matrix $\sqrt{T_{{\rm d},\alpha}}$ and a strictly upper triangular  matrix ${T_{{\rm u},\alpha}}$ as in \eqref{Talpha 2}, 
  introduce new matrix variables 
\begin{equation}\label{Tdalpha1}
T_{{\rm d},\alpha}=\frac{\tau c_{\alpha}}{f_{\alpha}}
\mathbb{I}_n+\widetilde{T}_{{\rm d},\alpha},
\end{equation}
It's easy to obtain
\begin{equation}\label{Talphaexpansion1}
c_{\alpha}{\rm Tr}\log T_{{\rm d},\alpha}-\frac{f_{\alpha}}{\tau}{\rm Tr}(T_{{\rm d},\alpha})
=
nc_{\alpha}\big(
\log \frac{\tau c_{\alpha}}{f_{\alpha}}-1
\big)-\frac{f_{\alpha}^2}{2\tau^2 c_{\alpha}}{\rm Tr}\widetilde{T}_{{\rm d},\alpha}^2
+O\big(
\|
\widetilde{T}_{{\rm d},\alpha}
\|^3
\big),
\end{equation}
\begin{equation*}
\sqrt{T_{{\rm d},\alpha}}=\sqrt{\frac{\tau c_{\alpha}}{f_{\alpha}}}
\left(
\mathbb{I}_n+\frac{f_{\alpha}}{2\tau c_{\alpha}}\widetilde{T}_{{\rm d},\alpha}
+O\big(
\|
\widetilde{T}_{{\rm d},\alpha}
\|^2
\big)
\right),
\end{equation*}
\begin{equation*}
\begin{aligned}
T_{\alpha}T_{\alpha}^*
&=\frac{\tau c_{\alpha}}{f_{\alpha}}\mathbb{I}_n+\widetilde{T}_{{\rm d},\alpha}+
\sqrt{\frac{\tau c_{\alpha}}{f_{\alpha}}}
\left(
\mathbb{I}_n+\frac{f_{\alpha}}{2\tau c_{\alpha}}\widetilde{T}_{{\rm d},\alpha}
+O\big(
\|
\widetilde{T}_{{\rm d},\alpha}
\|^2
\big)
\right)
T_{{\rm u},\alpha}^*     
\\
&+\sqrt{\frac{\tau c_{\alpha}}{f_{\alpha}}}
T_{{\rm u},\alpha}
\left(
\mathbb{I}_n+\frac{f_{\alpha}}{2\tau c_{\alpha}}\widetilde{T}_{{\rm d},\alpha}
+O\big(
\|
\widetilde{T}_{{\rm d},\alpha}
\|^2
\big)\right)
+T_{{\rm u},\alpha}T_{{\rm u},\alpha}^*,
\end{aligned}
\end{equation*}
from which we have for $i<j$,
\begin{equation}\label{Talphaexpansion4}
\left(
\sum_{\alpha=1}^t a_{\alpha}T_{\alpha}T_{\alpha}^*
\right)_{i,j}
=\sum_{\alpha=1}^t a_{\alpha}\sqrt{\frac{\tau c_{\alpha}}{f_{\alpha}}}
t_{i,j}^{(\alpha)}
+O\left(
\| T_{{\rm u},\alpha} \|^2+\| T_{{\rm u},\alpha} \|
\| \widetilde{T}_{{\rm d},\alpha} \|
\right),
\end{equation} 
where \eqref{Talpha 2} has been used, and
\begin{equation}\label{Talphaexpansion5}
{\rm Tr}\big(
\hat{Z}T_{\alpha}T_{\alpha}^*
\big)
=\frac{\tau c_{\alpha}}{f_{\alpha}}{\rm Tr}\big(
\hat{Z}
\big)+
{\rm Tr}\big(
\hat{Z}\widetilde{T}_{{\rm d},\alpha}
\big)+O\left(
\|
T_{{\rm u},\alpha}
\|^2
\right).
\end{equation}
At the same time,
\begin{equation*}
\left(
z_0Q_0Q_0^*+Q_{t+1}A_{t+1}Q_{t+1}^*
\right)_{i,j}
=
z_0
\left(
Q_0Q_0^*
\right)_{i,j}
+O\left(
\| Q_{t+1} \|^2
\right).
\end{equation*}
Moreover, we see from the boundary condition $P_{00}(z_0)=1$ that  the restriction condition in  \eqref{Omega} becomes
\begin{equation*}
\sum_{\alpha=1}^t \widetilde{T}_{{\rm d},\alpha}+
\sum_{\alpha=1}^t \sqrt{\frac{\tau c_{\alpha}}{f_{\alpha}}}
T_{{\rm u},\alpha}+
\sum_{\alpha=1}^t \sqrt{\frac{\tau c_{\alpha}}{f_{\alpha}}}
T_{{\rm u},\alpha}^*+S\leq 0,
\end{equation*}
where
\begin{equation}\label{S}
\begin{aligned}
S&=\sum_{\alpha=1}^t T_{{\rm u},\alpha}T_{{\rm u},\alpha}^*
+Q_0Q_0^*+Q_{t+1}Q_{t+1}^*   \\
&+\sum_{\alpha=1}^t T_{{\rm u},\alpha}\Big(
\sqrt{T_{{\rm d},\alpha}}-
\sqrt{\frac{\tau c_{\alpha}}{f_{\alpha}}}\mathbb{I}_n
\Big)+
\sum_{\alpha=1}^t \Big(
\sqrt{T_{{\rm d},\alpha}}-
\sqrt{\frac{\tau c_{\alpha}}{f_{\alpha}}}\mathbb{I}_n
\Big)T_{{\rm u},\alpha}^*.
\end{aligned}
\end{equation}
Here the first line contains all diagonal parts of $S$, while the second line only contains the non-diagonal parts.

 Let  $S^{({\rm d})}$ be a  diagonal  matrix extracted   from the diagonal part of $S$ given  in \eqref{S},  introduce a new diagonal matrix   $\Gamma_1$ and make change of variables  from 
 $\widetilde{T}_{{\rm d},1}, \widetilde{T}_{{\rm d},2}, \cdots, \widetilde{T}_{{\rm d},t}$ to 
\begin{equation}\label{matrix transformations1}
\begin{aligned} 
\Gamma_1:=\sum_{\alpha=1}^t    \widetilde{T}_{{\rm d},\alpha}+S^{({\rm d})},
\quad \widetilde{T}_{{\rm d},2},\, \ldots, \widetilde{T}_{{\rm d},t}.
\end{aligned}
\end{equation}
Note that $S^{({\rm d})}$ doesn't depend on $\widetilde{T}_{{\rm d},\alpha}$, so the above  transformation is linear and
\begin{equation*}
S^{({\rm d})}=(Q_0Q_0^*)^{({\rm d})}+
O\big(
\| T_{{\rm u}} \|^2+\| Q_{t+1} \|^2
\big).
\end{equation*}
We will see that  the typical size for  $Q_{t+1}, \widetilde{T}_{{\rm d},2},\, \ldots, \widetilde{T}_{{\rm d},t}$ are $N^{-1/2}$   while it  is  $N^{-1}$ for  $\Gamma_1$ and it  is  $N^{-\frac{1}{4}}$ for  $Q_0$. So we will do Taylor expansions up to some proper orders  accordingly.

Also let  $S^{({\rm off},{\rm u})}$ be a strictly upper triangular matrix extracted from the strictly upper triangular part of $S$ given  in \eqref{S},  introduce a new strictly upper triangular matrix   $G_1$ and make change of variables  from 
 $T_{{\rm u},1}, T_{{\rm u},2}, \cdots, T_{{\rm u},t}$ to 
\begin{equation}\label{matrix transformations2}
\begin{aligned}
&G_1:=\sum_{\alpha=1}^t  \sqrt{\frac{\tau c_{\alpha}}{f_{\alpha}}}
T_{{\rm u},\alpha}+S^{({\rm off},{\rm u})},
\quad T_{{\rm u},2},\, \ldots,T_{{\rm u},t}.
\end{aligned}
\end{equation}
We will see that  the typical size for $T_{{\rm u},2}, \cdots, T_{{\rm u},t}$  are $N^{-1/2}$   while it  is  $N^{-1}$ for  $G_1$. So we will do Taylor expansions up to some proper orders  accordingly.

As to the change of variables   \eqref{matrix transformations2}, the Jacobian determinant reads
\begin{equation*}
\det\Big(
\frac{\partial T_{{\rm u},1}}{\partial G_1}
\Big)
=\left(
\frac{f_1}{\tau c_1}
\right)^{\frac{n(n-1)}{2}}+
O\big(
\| T_{{\rm u},1} \|+\| \widetilde{T}_{{\rm d}} \|
\big).
\end{equation*}
At the same time, 
\begin{equation}\label{Tdestimate0}
\| \widetilde{T}_{{\rm d}} \|=O\big(
\| \widetilde{T}_{{\rm d}} \|_2+\|\Gamma_1\|+\| Q_0 \|^2
+\| T_{{\rm u}} \|+\| Q_{t+1} \|
\big),
\end{equation}
where the last two terms  can be replaced by  $\| T_{{\rm u}} \|^2+\| Q_{t+1} \|^2$ for exactness,
and from 
\begin{equation*}
S^{({\rm off},{\rm u})}=(Q_0Q_0^*)^{({\rm off},{\rm u})}+O\Big(
\| T_{{\rm u}} \|\big(
\| T_{{\rm u}} \|+\| \widetilde{T}_{{\rm d}} \|
\big)+\| Q_{t+1} \|^2
\Big),
\end{equation*}
we have from \eqref{matrix transformations1} that 
\begin{equation*}
\| T_{{\rm u}} \|=O\big(
\| G_1 \|+\|T_{{\rm u}}\|_2+\| Q_0 \|^2
+\| Q_{t+1} \|
\big)+\| T_{{\rm u}} \|O\big(
\| T_{{\rm u}} \|+\| \widetilde{T}_{{\rm d}} \|
\big).
\end{equation*}
Also from \eqref{ANdelta} we have 
$$ \| T_{{\rm u}} \|+\| \widetilde{T}_{{\rm d}} \|
=O\big( \sqrt{\delta} \big),
 $$
as $\delta>0$ is sufficiently small, we have
 \begin{equation}\label{Tuestimate}
\| T_{{\rm u}} \|=O\big(
\| G_1 \|+\|T_{{\rm u}}\|_2+\| Q_0 \|^2
+\| Q_{t+1} \|
\big).
\end{equation}
Substitute \eqref{Tuestimate} into \eqref{Tdestimate0}, we have
\begin{equation}\label{Tdestimate1}
\| \widetilde{T}_{{\rm d}} \|=O\big(
\Omega_{{\rm error}}
\big),
\end{equation}
where
\begin{equation}\label{Omegaerror}   
\Omega_{{\rm error}}:=\| \widetilde{T}_{{\rm d}} \|_2+\|\Gamma_1\|
+\| G_1 \|+\|T_{{\rm u}}\|_2
+\| Q_0 \|^2+\| Q_{t+1} \|,
\end{equation}
and further
\begin{equation}\label{Jacobiandet}
\det\Big(
\frac{\partial T_{{\rm u},1}}{\partial G_1}
\Big)
=\left(
\frac{f_1}{\tau c_1}
\right)^{\frac{n(n-1)}{2}}+O(\Omega_{{\rm error}}).
\end{equation}
In view of \eqref{fTY} and \eqref{hQrewrite}, we get 
\begin{equation*}
\frac{|z_0|^2}{\tau}\Big(
\sum_{\alpha=1}^t {\rm Tr}(T_{\alpha}T_{\alpha}^*)+
{\rm Tr}(Q_{0}Q_{0}^*)+{\rm Tr}(Q_{t+1}Q_{t+1}^*)
\Big)=\frac{n|z_0|^2}{\tau}+\frac{|z_0|^2}{\tau}{\rm Tr}(\Gamma_1).
\end{equation*}

In view of \eqref{fTY}, \eqref{hQrewrite} and \eqref{Talphaexpansion4}, from transformations \eqref{matrix transformations1} and \eqref{matrix transformations2} we have
\begin{small}
\begin{equation*}
\begin{aligned}
&-\sum_{\alpha=1}^t
f_{\alpha}{\rm Tr}\big(
T_{{\rm u},\alpha}T_{{\rm u},\alpha}^*
\big)+\sum_{1\leq i < j\leq n}
 \left|
 \left(
 \sum_{\alpha=1}^t a_{\alpha}T_{\alpha}T_{\alpha}^*
+z_0
Q_0Q_0^*+
Q_{t+1}A_{t+1}Q_{t+1}^*
 \right)_{i,j} 
 \right|^2       \\
 &=-\frac{f_1^2}{\tau c_1}
 {\rm Tr}\left(
 (Q_0Q_0^*)^{({\rm off},{\rm u})}+\sum_{\alpha=2}^t
 \sqrt{\frac{\tau c_{\alpha}}{f_{\alpha}}}T_{{\rm u},\alpha}
 \right)
 \left(
 (Q_0Q_0^*)^{({\rm off},{\rm u})}+\sum_{\alpha=2}^t
 \sqrt{\frac{\tau c_{\alpha}}{f_{\alpha}}}T_{{\rm u},\alpha}
 \right)^*  -\sum_{\alpha=2}^t
f_{\alpha}{\rm Tr}\big(
T_{{\rm u},\alpha}T_{{\rm u},\alpha}^*
\big)             \\
&     
+\sum_{1\leq i < j\leq n}
 \left|
(z_0-a_1)  (Q_0Q_0^*)^{({\rm off},{\rm u})}_{i,j}
+\sum_{\alpha=2}^t
\sqrt{\frac{\tau c_{\alpha}}{f_{\alpha}}}
(a_{\alpha}-a_1)
t_{i,j}^{(\alpha)}
 \right|^2+O(\Omega_{{\rm error}}^3+\|G_1\|\Omega_{{\rm error}}),
\end{aligned}
\end{equation*}
\end{small}
and
\begin{small}
\begin{equation}\label{diagonal1}
\begin{aligned}
&-\sum_{\alpha=1}^t\frac{f_{\alpha}^2}{2\tau^2 c_{\alpha}}
{\rm Tr}
\left(
\widetilde{T}_{{\rm d},\alpha}^2
\right)
+\frac{|z_0|^2}{\tau}{\rm Tr}\left(
\sum_{\alpha=1}^t\widetilde{T}_{{\rm d},\alpha}
+S^{({\rm d})}
\right)
+\frac{N^{-\frac{1}{2}}}{\tau}
\sum_{\alpha=1}^t\left(
\overline{a}_{\alpha}{\rm Tr}\big(
\hat{Z}\widetilde{T}_{{\rm d},\alpha}
\big)
+a_{\alpha}{\rm Tr}\big(
\hat{Z}^*\widetilde{T}_{{\rm d},\alpha}
\big)
\right)              \\
&=\frac{|z_0|^2}{\tau}{\rm Tr}(\Gamma_1)
-\frac{f_{1}^2}{2\tau^2 c_{1}}{\rm Tr}
\left(
 (Q_0Q_0^*)^{({\rm d})}+\sum_{\alpha=2}^t
\widetilde{T}_{{\rm d},\alpha}
 \right)^2
 -\sum_{\alpha=2}^t\frac{f_{\alpha}^2}{2\tau^2 c_{\alpha}}
{\rm Tr}\big(\widetilde{T}_{{\rm d},\alpha}^2  \big)  
+\frac{N^{-\frac{1}{2}}}{\tau}\sum_{\alpha=2}^t{\rm Tr}
\big((\overline{a}_{\alpha}\hat{Z}+a_{\alpha}\hat{Z}^*)
\widetilde{T}_{{\rm d},\alpha}\big)    \\
&-\frac{N^{-\frac{1}{2}}}{\tau}{\rm Tr}\Big(
(\overline{a}_{1}\hat{Z}+a_{1}\hat{Z}^*)
\big(
(Q_0Q_0^*)^{({\rm d})}+\sum_{\alpha=2}^t \widetilde{T}_{{\rm d},\alpha}
\big)
\Big)+O\Big(
\big(
\Omega_{{\rm error}}+N^{-\frac{1}{2}}
\big)
\big(
\Omega_{{\rm error}}^2+\|\Gamma_1\|
\big)
\Big).
\end{aligned}
\end{equation}
\end{small}
Also, the integration  region now becomes simply
\begin{equation}\label{SimpleRegion}
\Gamma_1+G_1+G_1^*\leq 0.
\end{equation}

In view of \eqref{logAalpha decompose}, noticing \eqref{HAalpha}, we obtain 
\begin{small}
\begin{equation}\label{logdetHA}
\begin{aligned}
\log\det\left(
\mathbb{I}_{2n}+\sqrt{\gamma_N}
N^{-\frac{1}{2}}\widehat{A}_{\alpha}
\right)
=N^{-\frac{1}{2}}{\rm Tr}\big(
\widehat{A}_{\alpha}
\big)-\frac{1}{2}N^{-1}{\rm Tr}\big(
\widehat{A}_{\alpha}^2
\big)
+O\big(
N^{-\frac{3}{2}}
\big).
\end{aligned}
\end{equation}
\end{small}

Combining \eqref{logAalpha decompose}, \eqref{HAalpha} and \eqref{logdetHA}, and introducing $P_0$ defined in \eqref{parameter}, we have
\begin{small}
\begin{equation}\label{finY}
\begin{aligned}
&{\rm Tr}(YY^*)-\sum_{\alpha=1}^t
c_{\alpha}\log\det(A_{\alpha})=
-\frac{n^2}{N}-n\sum_{\alpha=1}^tc_{\alpha}\log f_{\alpha}
\\
&+\frac{N^{-1}}{2}
\sum_{\alpha=1}^t\frac{c_{\alpha}}{f_{\alpha}^2}
\left(
\overline{z_0-a_{\alpha}}^2{\rm Tr}\big(
\hat{Z}^2
\big)+
(z_0-a_{\alpha})^2{\rm Tr}\big(
\hat{Z}^*
\big)^2
\right)    \\
&-N^{-\frac{1}{2}}\sum_{\alpha=1}^t\frac{c_{\alpha}}{f_{\alpha}}
\left(
\overline{z_0-a_{\alpha}}
{\rm Tr}\big(
\hat{Z}
\big)+
(z_0-a_{\alpha})
{\rm Tr}\big(
\hat{Z}^*
\big)
\right)   
 +\frac{P_1}{2} {\rm Tr}(YY^*)^2 \\
&-N^{-\frac{1}{2}}\left(
\overline{P_0}{\rm Tr}\big(
Y^*Y\hat{Z}
\big)+P_0{\rm Tr}\big(
YY^*\hat{Z}^*
\big)
\right)+O\big(
N^{-\frac{3}{2}}+\| Y \|^6+N^{-1}\| Y \|^2+N^{-\frac{1}{2}}\| Y \|^4
\big).
\end{aligned}
\end{equation}
\end{small}

Combining \eqref{fTY}, \eqref{Talphaexpansion1}, \eqref{Talphaexpansion5}, \eqref{Tuestimate}-\eqref{diagonal1} and \eqref{finY}, we thus obtain the desired result. 
\end{proof}

\begin{proof}[Proof of Proposition \ref{gTaylor}]
For $g(T,Y,Q)$ defined in \eqref{gYUT},  we write down  leading terms  for five relevant  factors while the estimate for  the last determinant   in \eqref{gYUT}  is  non-trivial.
 \begin{itemize}
 \item[(1)]For the first factor in  \eqref{gYUT},  by \eqref{matrix transformations1} and \eqref{matrix transformations2},
 \begin{equation}\label{detTQ}
\Big(
\det\big(
\mathbb{I}_n-\sum_{\alpha=1}^tT_{\alpha}T_{\alpha}^*
-Q_0Q_0^*-Q_{t+1}Q_{t+1}^*
\big)
\Big)^{R_0-n}=\Big(\det\big(
-\Gamma_1-G_1-G_1^*
\big)\Big)^{R_0-n},
\end{equation}
  \item[(2)]For the second factor in  \eqref{gYUT}, by \eqref{zi},
 \begin{equation*}
\det\begin{bmatrix}
\sqrt{\gamma_N}Z & -Y^*   \\
Y  &   \sqrt{\gamma_N}Z^*
\end{bmatrix}=
|z_0|^{2n}+O\big(
N^{-\frac{1}{2}}+\| Y \|
\big).  
\end{equation*} 
  \item[(3)]For the third factor in  \eqref{gYUT} with $\alpha=1,\cdots,t,$, it is easy to see from \eqref{Tdalpha1} and \eqref{Omegaerror} that
 \begin{equation}\label{tjjalpha}
 \prod_{\alpha=1}^t\prod_{j=1}^n
\big(
t_{j,j}^{(\alpha)}
\big)^{R_{\alpha,N}-j}=
\prod_{\alpha=1}^t\prod_{j=1}^n
\Big(
   \frac{\tau c_{\alpha}}{f_{\alpha}}
+O\big(\Omega_{\rm error}
\big)\Big)^{R_{\alpha,N}-j}.  
\end{equation}
 \item[(4)]For the fourth factor in  \eqref{gYUT}, by \eqref{zi} and \eqref{A alpha}
 \begin{equation}\label{detAalpha}
 \prod_{\alpha=1}^t  \big(\det(A_{\alpha})\big)^{R_{\alpha,N}-n}=
 \prod_{\alpha=1}^t\Big(f_{\alpha}^n
+O\big(
N^{-\frac{1}{2}}+\| Y \|
\big)\Big)^{R_{\alpha,N}-n}.
\end{equation}

  \end{itemize}
  
The remaining part of this subsection  is devoted to    the last determinant   in \eqref{gYUT}.

Recalling $\hat{L}_1$ and $\hat{L}_2$ defined in    \eqref {L1hat} and \eqref{L0hat}, since all $a_{\alpha}\neq z_0$, $A_{a_{\alpha}}$ is  invertible when $N$ is sufficiently large. Rewrite
\begin{equation*}
\det\!\bigg(
\widehat{L}_1+\sqrt{\gamma_N}\widehat{L}_2
\bigg)=\left[\begin{smallmatrix}
P_{1,1} & O\big( \|\widetilde{Q}_{t+1}\| \big)   \\
O\big( \|\widetilde{Q}_{t+1}\| \big) &
\widehat{B}_{t+1}+O\big( \|\widetilde{Q}_{t+1}\|^2 \big)
 \end{smallmatrix}
\right],
\end{equation*}
where
\begin{small}
\begin{equation*}
P_{1,1}=\left[\begin{smallmatrix}
A_1\otimes \mathbb{I}_n &&\\
& \ldots & \\
&& A_t\otimes \mathbb{I}_n
 \end{smallmatrix}
\right]+
\sqrt{\gamma_N}
\left[\begin{smallmatrix}
\mathbb{I}_{2n}\otimes T_1^*\\
\vdots \\
 \mathbb{I}_{2n}\otimes T_t^* 
 \end{smallmatrix}
\right]
\left[\begin{smallmatrix}
\left[\begin{smallmatrix}
a_1\mathbb{I}_n & \\ & \overline{a}_{1}\mathbb{I}_n
\end{smallmatrix}\right]\otimes T_1 &
,\cdots, &
\left[\begin{smallmatrix}
a_{t}\mathbb{I}_n & \\ & \overline{a}_{t}\mathbb{I}_n
\end{smallmatrix}\right]\otimes T_t
\end{smallmatrix}\right],
\end{equation*}
\end{small}
change the order of the above matrix product and  we obtain 
 \begin{equation*}
\begin{aligned}
&\det(P_{1,1})=\Big(\prod_{\alpha=1}^t \det\big( A_{\alpha} \big) \Big)^n
\det\bigg(
\mathbb{I}_{2n^2}+\sqrt{\gamma_N} 
\sum_{\alpha=1}^t
\left( A_{\alpha}^{-1} \left[\begin{smallmatrix}
a_{\alpha}\mathbb{I}_n & \\ & \overline{a}_{\alpha}\mathbb{I}_n
\end{smallmatrix}\right] \right) \otimes
(T_{\alpha}T_{\alpha}^*)
\bigg),
\end{aligned}
\end{equation*}
Simple calculations show that
\begin{equation}\label{Zalpha plus inverse}
\Big(
Z_{\alpha}+Y^*\big(Z_{\alpha}^*\big)^{-1}Y
\Big)^{-1}=(z_0-a_{\alpha})^{-1}\mathbb{I}_n
+O\big( N^{-\frac{1}{2}}+\|Y\|^2 \big),
\end{equation}
\begin{equation}\label{Talpha Talphastar}
T_{\alpha}T_{\alpha}^*=\frac{\tau c_{\alpha}}{f_{\alpha}}\mathbb{I}_n
+O\big( 
\Omega_{\rm error}
 \big).
\end{equation}
Rewrite $\frac{a_{\alpha}}{z_0-a_{\alpha}}=
\frac{a_{\alpha}\overline{z_0-a_{\alpha}}}{f_{\alpha}}$, we have
\begin{small}
\begin{equation*}
\begin{aligned}
\mathbb{I}_{2n^2}
&+\sqrt{\gamma_N} 
\sum_{\alpha=1}^t
\left( A_{\alpha}^{-1} \left[\begin{smallmatrix}
a_{\alpha}\mathbb{I}_n & \\ & \overline{a}_{\alpha}\mathbb{I}_n
\end{smallmatrix}\right] \right) \otimes
(T_{\alpha}T_{\alpha}^*)=
\mathbb{I}_{2n^2}+
\\&
\sum_{\alpha=1}^t
\begin{bmatrix}
\frac{\tau c_{\alpha}a_{\alpha}\overline{z_0-a_{\alpha}}}{f_{\alpha}^2}
\mathbb{I}_{n^2} & \\
& 
\frac{\tau c_{\alpha}\overline{a}_{\alpha}(z_0-a_{\alpha})}{f_{\alpha}^2}
\mathbb{I}_{n^2}
\end{bmatrix}
+O\big( 
\Omega_{\rm error}
+N^{-\frac{1}{2}}+\|Y\|
 \big).
\end{aligned}
\end{equation*}
\end{small}
While from $P_{00}(z_0)=1$
\begin{equation*}
\begin{aligned}
1+\sum_{\alpha=1}^t \frac{\tau c_{\alpha}a_{\alpha}\overline{z_0-a_{\alpha}}}{f_{\alpha}^2}&=
1+\tau\sum_{\alpha=1}^t\frac{c_{\alpha}(a_{\alpha}-z_0+z_0)
\overline{z_0-a_{\alpha}}}{f_{\alpha}^2}   \\
&=1-\tau\sum_{\alpha=1}^t \frac{c_{\alpha}}{f_{\alpha}}
+\tau z_0\sum_{\alpha=1}^t \frac{c_{\alpha}\overline{z_0-a_{\alpha}}}{f_{\alpha}^2}=-\tau z_0\overline{P}_0,
\end{aligned}
\end{equation*}
where $P_0$ is defined in \eqref{parameter}. Combining \eqref{detAalpha} we have
\begin{equation}\label{detsum tildeAalpha}
\det\big(  
P_{1,1}
\big)   
=\tau^{2n^2}|z_0|^{2n^2}|P_0|^{2n^2}\prod_{\alpha=1}^t f_{\alpha}^{n^2}+
O\big( 
\Omega_{\rm error}
+N^{-\frac{1}{2}}+\|Y\|
 \big).
\end{equation} 
As $P_0\not=0$ and $z_0\not=0$, we have $ \det(P_{1,1})\not=0. $
and from $\widetilde{Q}_{t+1}=[Q_0,Q_{t+1}]$, $
\|\widetilde{Q}_{t+1}\|^2=O\big(
\|Q_0\|^2+\|Q_{t+1}\|
\big)=O\big(
\Omega_{\rm error}
\big).
$ Therefore,
\begin{equation*}
\det\!\bigg(
\widehat{L}_1+\sqrt{\gamma_N}\widehat{L}_2
\bigg)=\det(P_{1,1})\det\Big(
\widehat{B}_{t+1}+O\big(
\|\widetilde{Q}_{t+1}\|^2
\big)
\Big).
\end{equation*}
In view of \eqref{B0hat} and \eqref{Q convenience}, from elementary computations,
\begin{small}
\begin{equation}\label{detP22}
\det\Big(
\widehat{B}_{t+1}+O\big(
\|\widetilde{Q}_{t+1}\|^2
\big)
\Big)= 
\big| \det\big( z_0\mathbb{I}_{r_{t+1}}-A_{t+1} \big) \big|^{2n}
 \Big(
\big(\det(YY^*)\big)^{r_0}
 +O\Big(
\sum_{\alpha,\beta}\| Y \|^{\alpha}
\Omega_{\rm error}^{\beta}
\Big)
\Big),
\end{equation}
\end{small}
 where the finite non-negative integer pair $(\alpha,\beta)$ is such that
 \begin{equation}\label{M error condition}
\frac{1}{4}\alpha+\frac{1}{2}\beta\geq 
\frac{1}{2}n r_0+\frac{1}{4}.
\end{equation}
Combing \eqref{detsum tildeAalpha}-\eqref{detP22}, we have
\begin{small}
 \begin{equation*}
\begin{aligned}
\det\!\bigg(
\widehat{L}_1+\sqrt{\gamma_N}\widehat{L}_2
\bigg)&=\big| \det\big( z_0\mathbb{I}_{r_{t+1}}-A_{t+1} \big) \big|^{2n}
\Big(
\big(\det(YY^*)\big)^{r_0}
 +O\Big(
\sum_{\alpha,\beta}\| Y \|^{\alpha}
\Omega_{\rm error}^{\beta}
\Big)
\Big)
\\&\times\Big(
\tau^{2n^2}|z_0|^{2n^2}|P_0|^{2n^2}
\prod_{\alpha=1}^t
f_{\alpha}^{n^2}
 +O\big(
\Omega_{\rm error}+N^{-\frac{1}{2}}+\| Y \|
\big)
\Big),
\end{aligned}
\end{equation*}
\end{small}
where the finite non-negative integer pair $(\alpha,\beta)$ is such that
 \eqref{M error condition}.
\end{proof}

\subsection{Proof of Theorem \ref{2-complex-correlation}: $z_0= 0$}

The proof in  the case of $z_0=0$ is similar,  we only point out the different places from the case of $z_0\not=0$.

As $z_0=0$,  in view of $X_0={\rm diag}\left( A_0,0_{R_0} \right)$ in Definition \ref{GinU}, $R_0=N-r$  and 
\newline
$A_0={\rm diag}\left(a_1\mathbb{I}_{r_1},\cdots,a_t\mathbb{I}_{r_t},
0_{r_0},A_{t+1}
\right)$ below and in \eqref{A0 form}, there are $R_0+r_0$ 0s. Hence, without loss of generality we can simply set $r_0=0$, and further $Q_0=0$ in Section  \ref{sectnotation}. For {\bf Taylor expansion of $f(T,Y,Q)$}, We still have \eqref{Tdalpha1}-\eqref{Tdestimate1} and   \eqref{SimpleRegion} with $z_0=0$ and $Q_0=0$. But note that as $z_0=0$, linear term of $\Gamma_1$ does not exist on the exponent. This means that the typical size of $\Gamma_1$ is $N^{-\frac{1}{2}}$. Correspondingly, the typical size of $G_1$ is also $N^{-\frac{1}{2}}$, this is different from the case of $z_0\not=0$.
 
 Hence, we have
 \begin{small}
\begin{equation}\label{triangle1 z0equal0}
\begin{aligned}
&-\sum_{\alpha=1}^t
f_{\alpha}{\rm Tr}\big(
T_{{\rm u},\alpha}T_{{\rm u},\alpha}^*
\big)+\sum_{1\leq i < j\leq n}
 \left|
 \left(
 \sum_{\alpha=1}^t a_{\alpha}T_{\alpha}T_{\alpha}^*
 +
Q_{t+1}A_{t+1}Q_{t+1}^*
 \right)_{i,j} 
 \right|^2       \\
 &=-\frac{f_1^2}{\tau c_1}
 {\rm Tr}\left(
 G_1-\sum_{\alpha=2}^t
 \sqrt{\frac{\tau c_{\alpha}}{f_{\alpha}}}T_{{\rm u},\alpha}
 \right)
 \left(
 G_1-\sum_{\alpha=2}^t
 \sqrt{\frac{\tau c_{\alpha}}{f_{\alpha}}}T_{{\rm u},\alpha}
 \right)^*        
-\sum_{\alpha=2}^t
f_{\alpha}{\rm Tr}\big(
T_{{\rm u},\alpha}T_{{\rm u},\alpha}^*
\big)     \\      
&+\sum_{1\leq i < j\leq n}
 \left|
a_1(G_1)_{i,j}
+\sum_{\alpha=2}^t
\sqrt{\frac{\tau c_{\alpha}}{f_{\alpha}}}
(a_{\alpha}-a_1)
t_{i,j}^{(\alpha)}
 \right|^2   
 +O\left(
\Omega_{\rm error}^3
 \right),
\end{aligned}
\end{equation}
\end{small}
where
\begin{equation*}
\Omega_{\rm error}=\| G_1 \|+\| T_{\rm u} \|_2
+\| \Gamma_1 \|+\| \widetilde{T}_{\rm d} \|_2+\| Q_{t+1} \|.
\end{equation*}
In view of \eqref{Tdalpha1} and \eqref{Talphaexpansion5},
\begin{small}
\begin{equation}\label{diagonal1 z0equal0}
\begin{aligned}
&-\sum_{\alpha=1}^t\frac{f_{\alpha}^2}{2\tau^2 c_{\alpha}}
{\rm Tr}
\big(
\widetilde{T}_{{\rm d},\alpha}^2
\big)+\frac{1}{\tau}
N^{-\frac{1}{2}}
\sum_{\alpha=1}^t\Big(
\overline{a}_{\alpha}{\rm Tr}\big(
\hat{Z}\widetilde{T}_{{\rm d},\alpha}
\big)
+a_{\alpha}{\rm Tr}\big(
\hat{Z}^*\widetilde{T}_{{\rm d},\alpha}
\big)
\Big)
\\&
=
-\frac{f_{1}^2}{2\tau^2 c_{1}}{\rm Tr}
\Big(
 \Gamma_1-\sum_{\alpha=2}^t
\widetilde{T}_{{\rm d},\alpha}
\Big)^2
 -\sum_{\alpha=2}^t\frac{f_{\alpha}^2}{2\tau^2 c_{\alpha}}
{\rm Tr}\big( \widetilde{T}_{{\rm d},\alpha}^2 \big)   
+\frac{1}{\tau}
N^{-\frac{1}{2}}
\sum_{\alpha=2}^t{\rm Tr}
\Big(
\big( \overline{a}_{\alpha}\hat{Z}+a_{\alpha}\hat{Z}^* \big)  
\widetilde{T}_{{\rm d},\alpha}
\Big) 
\\&   
+\frac{1}{\tau}
N^{-\frac{1}{2}}
{\rm Tr}
\Big(
\big( \overline{a}_{1}\hat{Z}+a_{1}\hat{Z}^* \big) 
\big(
\Gamma_1-\sum_{\alpha=2}^t \widetilde{T}_{{\rm d},\alpha}
\big)
\Big)
+O
\Big(
\big(
N^{-\frac{1}{2}}+\Omega_{\rm error}
\big)^3
\Big).       
\end{aligned}
\end{equation}
\end{small}
Also we still have \eqref{finY} with $z_0=0$. Combining all together,
\begin{equation*}
f(T,Y)-f_0\big(\tau;\big\{\frac{\tau c_{\alpha}}{f_{\alpha}}\mathbb{I}_n\big\},0,0\big)
=F_0+F_1+O(F_2),
\end{equation*}
where
\begin{equation*}
F_0=\frac{n^2}{N}       
-\frac{N^{-1}}{2}
\sum_{\alpha=1}^t\frac{c_{\alpha}}{f_{\alpha}^2}
\left(
\overline{a_{\alpha}}^2
{\rm Tr}\big(
\hat{Z}^2
\big)+
a_{\alpha}^2{\rm Tr}\big(
\hat{Z}^*
\big)^2
\right)  ,
\end{equation*}

\begin{small}
\begin{equation}\label{f1 0}
\begin{aligned}
&F_1=
-\frac{f_{1}^2}{2\tau^2 c_{1}}{\rm Tr}
\Big(
 \Gamma_1-\sum_{\alpha=2}^t
\widetilde{T}_{{\rm d},\alpha}
 \Big)^2
  -\sum_{\alpha=2}^t\frac{f_{\alpha}^2}{2\tau^2 c_{\alpha}}
  {\rm Tr}\big( \widetilde{T}_{{\rm d},\alpha}^2  \big)
-\frac{1}{\tau}{\rm Tr}\big(
Q_{t+1}
A_{t+1}^*
A_{t+1}
Q_{t+1}^*  \big)     \\
&+\frac{N^{-\frac{1}{2}}}{\tau}\left(
\sum_{\alpha=2}^t{\rm Tr}
\Big(
\big( \overline{a}_{\alpha}\hat{Z}+a_{\alpha}\hat{Z}^* \big)  
\widetilde{T}_{{\rm d},\alpha}
\Big)
+{\rm Tr}
\Big(
\big( \overline{a}_{1}\hat{Z}+a_{1}\hat{Z}^* \big) 
\big(
\Gamma_1-\sum_{\alpha=2}^t \widetilde{T}_{{\rm d},\alpha}
\big)
\Big)
\right)  \\
&-\sum_{\alpha=2}^t
\frac{f_{\alpha}}{\tau}{\rm Tr}\big(
T_{{\rm u},\alpha}T_{{\rm u},\alpha}^*
\big)-\frac{f_1^2}{\tau^2 c_1}
 {\rm Tr}\Big(
 G_1-\sum_{\alpha=2}^t
 \sqrt{\frac{\tau c_{\alpha}}{f_{\alpha}}}T_{{\rm u},\alpha}
 \Big)
 \Big(
 G_1-\sum_{\alpha=2}^t
 \sqrt{\frac{\tau c_{\alpha}}{f_{\alpha}}}T_{{\rm u},\alpha}
 \Big)^* 
\\&
 +\frac{1}{\tau}\sum_{1\leq i < j\leq n}
 \Big|
a_1(G_1)_{i,j}
+\sum_{\alpha=2}^t
\sqrt{\frac{\tau c_{\alpha}}{f_{\alpha}}}
(a_{\alpha}-a_1)
t_{i,j}^{(\alpha)}
 \Big|^2    
 \\& -\frac{P_1}{2} {\rm Tr}(YY^*)^2 
+N^{-\frac{1}{2}}\left(
\overline{P_0}{\rm Tr}\big(
Y^*Y\hat{Z}
\big)+P_0{\rm Tr}\big(
YY^*\hat{Z}^*
\big)
\right),
\end{aligned}
\end{equation}
\end{small}
and
\begin{small}
\begin{equation*}
\begin{aligned}
F_2=N^{-\frac{3}{2}}+\| Y \|^6+N^{-1}\| Y \|^2+N^{-\frac{1}{2}}\| Y \|^4
+ \big(
\Omega_{\rm error}+N^{-\frac{1}{2}}
\big)^3.
\end{aligned}
\end{equation*}
\end{small}

For {\bf Taylor expansion of $g(Y,U,T)$} defined in \eqref{gYUT}, we still have \eqref{detTQ}, \eqref{tjjalpha} and \eqref{detAalpha}, while as $z_0=0$,
\begin{small}
 \begin{equation*}
\det\begin{bmatrix}
\sqrt{\gamma_N}Z & -Y^*   \\
Y  &   \sqrt{\gamma_N}Z^*
\end{bmatrix}=\big(\det(YY^*)\big)^{n}+
O\Big(
\sum_{\alpha,\beta}\| Y \|^{\alpha}
N^{-\frac{\beta}{2}}
\Big),
\end{equation*}
\end{small}
 where $ \gamma_N=\frac{N}{N-n} $, the finite non-negative integer pair $(\alpha,\beta)$ is such that
\begin{small}
 \begin{equation}\label{M error condition 0}
\frac{\alpha}{4}+\frac{\beta}{2}\geq 
\frac{n^2}{2}+\frac{1}{4}.
\end{equation}
\end{small}

The remaining part is devoted to  the last determinant   in \eqref{gYUT}.

Recalling $\hat{L}_1$ and $\hat{L}_2$ defined in    \eqref {L1hat} and \eqref{L0hat},
since  $0$ is not an eigenvalue of $A_{t+1}$ and all $a_{\alpha}\neq 0$,  
$\widehat{L}_1$ is  invertible when $N$ is sufficiently large.  Together with the decomposition 
\begin{equation*}
\widehat{L}_2
=\left[\begin{smallmatrix}
\mathbb{I}_{2n}\otimes T_1^*\\
\vdots \\
 \mathbb{I}_{2n}\otimes T_t^* \\
 \left[\begin{smallmatrix}
\mathbb{I}_n \otimes  A_{t+1}Q_{t+1}^*  & \\ & \mathbb{I}_n \otimes  A_{t+1}^* Q_{t+1}^*
\end{smallmatrix}\right]
 \end{smallmatrix}
\right]
\left[\begin{smallmatrix}
\left[\begin{smallmatrix}
a_1\mathbb{I}_n & \\ & \overline{a}_{1}\mathbb{I}_n
\end{smallmatrix}\right]\otimes T_1 &
,\cdots, &
\left[\begin{smallmatrix}
a_{t}\mathbb{I}_n & \\ & \overline{a}_{t}\mathbb{I}_n
\end{smallmatrix}\right]\otimes T_t, &
\mathbb{I}_{2n}\otimes Q_{t+1}
\end{smallmatrix}\right],
\end{equation*} 
change the order of the above matrix product and  we obtain 
 \begin{small}
 \begin{equation}\label{detMrewrite0}
\begin{aligned}
&\det\!\bigg(
\widehat{L}_1+\sqrt{\gamma_N}\widehat{L}_2
\bigg)=\big|\det\big(A_{t+1}\big)\big|^{2n}
\Big( \prod_{\alpha=1}^t \det\big( A_{\alpha} \big) \Big)^n
\big( 1+O(\| Y \|+N^{-\frac{1}{4}}) \big) \\
&\times \det\bigg(
\mathbb{I}_{2n^2}+\sqrt{\gamma_N} \begin{bmatrix}
Q_{1,1} & Q_{1,2} \\ Q_{2,1} & Q_{2,2}
\end{bmatrix} +
O\big( \| Q_{t+1} \|^2 \big)
\bigg),
\end{aligned}
\end{equation}
 \end{small}
where 
\begin{small}
\begin{equation*}
\begin{bmatrix}
Q_{1,1} & Q_{1,2} \\ Q_{2,1} & Q_{2,2}
\end{bmatrix}:=\sum_{\alpha=1}^t
\left( A_{\alpha}^{-1} \left[\begin{smallmatrix}
a_{\alpha}\mathbb{I}_n & \\ & \overline{a}_{\alpha}\mathbb{I}_n
\end{smallmatrix}\right] \right) \otimes
(T_{\alpha}T_{\alpha}^*).
\end{equation*}
\end{small} 
Obviously,  by   \eqref{A alpha}, setting $Z_{\alpha}=\sqrt{\gamma_N}(Z-a_{\alpha}\mathbb{I}_n)$ we have  
\begin{small}
\begin{equation*}
\begin{aligned}
&Q_{1,1}=\sum_{\alpha=1}^t
a_{\alpha}\Big(
Z_{\alpha}+Y^*\big(Z_{\alpha}^*\big)^{-1}Y
\Big)^{-1}\otimes
(T_{\alpha}T_{\alpha}^*),\quad
Q_{2,2}=\sum_{\alpha=1}^t
\overline{a}_{\alpha} 
\big( Z_{\alpha}^*+YZ_{\alpha}^{-1}Y^* \big)^{-1} \otimes
(T_{\alpha}T_{\alpha}^*),
\\
&Q_{1,2}=\sum_{\alpha=1}^t \overline{a}_{\alpha}
Z_{\alpha}^{-1}Y^* \big( Z_{\alpha}^*+YZ_{\alpha}^{-1}Y^* \big)^{-1}
\otimes
(T_{\alpha}T_{\alpha}^*),   
\\
&
Q_{2,1}=-\sum_{\alpha=1}^t a_{\alpha}\big(Z_{\alpha}^*\big)^{-1}
Y \Big(
Z_{\alpha}+Y^*\big(Z_{\alpha}^*\big)^{-1}Y
\Big)^{-1}\otimes
(T_{\alpha}T_{\alpha}^*).
\end{aligned}
\end{equation*} 
\end{small}

In view of \eqref{Zalpha plus inverse} and \eqref{Talpha Talphastar}, we have
\begin{small}
\begin{equation}\label{Q11}
\mathbb{I}_{n^2}+\sqrt{\gamma_N}Q_{1,1}=O(\Delta),
\end{equation}
\end{small}
where
\begin{small}
\begin{equation*}
\Delta=N^{-\frac{1}{2}}+\Omega_{\rm error}+\| Y \|^2.
\end{equation*}
\begin{equation}\label{Q22 plus I}
\mathbb{I}_{n^2}+\sqrt{\gamma_N}Q_{2,2}
=O(\Delta).
\end{equation} 
\end{small}
Also, we have 
\begin{small}
\begin{equation}\label{Q12}
\begin{aligned}
\sqrt{\gamma_N}Q_{1,2}
&=
\sum_{\alpha=1}^t
 \frac{\overline{a}_{\alpha}}{-a_{\alpha}}
 \frac{Y^*}{\overline{-a_{\alpha}}}\otimes
\Big( 
 \frac{\tau c_{\alpha}}{f_{\alpha}}\mathbb{I}_{n}
  \Big)+O(\Delta)
  \\
&=\tau\overline{P}_0(Y^*\otimes
\mathbb{I}_{n})+O(\Delta),
\end{aligned}
\end{equation}
\end{small}
and
\begin{small}
\begin{equation}\label{Q21}
\sqrt{\gamma_N}Q_{2,1}
=-\tau P_0(Y\otimes
\mathbb{I}_{n})+O(\Delta).
\end{equation}
\end{small}
 Combing \eqref{detMrewrite0}, \eqref{Q11}, \eqref{Q22 plus I}, \eqref{Q12} and \eqref{Q21} we have
\begin{small}
 \begin{equation*}
\begin{aligned}
&\det\!\bigg(
\widehat{L}_1+\sqrt{\gamma_N}\widehat{L}_2
\bigg)=\big| \det\big( A_{t+1} \big) \big|^{2n}\Big(
\prod_{\alpha=1}^t
f_{\alpha}^{n^2} +O\big(
N^{-\frac{1}{2}}+\| Y \|
\big)
\Big)   \\
&\times \left( \tau^{2n^2}|P_0|^{2n^2}
\big(\det(YY^*)\big)^{n}
 +O\Big(
\sum_{\alpha,\beta}\| Y \|^{\alpha}\big(
N^{-\frac{1}{2}}+\Omega_{\rm error}
\big)^{\beta}
\Big)
\right) ,
\end{aligned}
\end{equation*}
\end{small}
where the finite non-negative integer pair $(\alpha,\beta)$ is such that \eqref{M error condition 0}.

For the {\bf final proof}, computation of $e^{NF_0}D_{N,n}$ can be obtained directly by setting $r_0=0$ and $z_0=0$ in \eqref{eNf0 DNn}. 

By repeating almost the same procedure we can get the desired result. Last but not least, in case $z_0=0$, the change of integration variables is
\begin{equation*}
Y\rightarrow N^{-\frac{1}{4}}Y,\quad
\big(
G_1,\Gamma_1,\widetilde{T}_{{\rm d},\alpha},T_{{\rm u},\alpha}
\big)
\rightarrow
N^{-\frac{1}{2}}\big(
G_1,\Gamma_1,\widetilde{T}_{{\rm d},\alpha},T_{{\rm u},\alpha}
\big),\quad
\alpha=2,\cdots,t.
\end{equation*}
When comparing \eqref{f1 0} with \eqref{f1Non0}, we discover that 
$\Gamma_1$ takes place of $(Q_0Q_0^*)^{({\rm d})}$, while $G_1$ takes place of $(Q_0Q_0^*)^{({\rm off},{\rm u})}$, other terms remain unchanged. This is caused by the difference of typical sizes of $\Gamma_1$ and $G_1$ between $z_0=0$ and $z_0\not=0$. So when dealing with the term \eqref{triangle1 z0equal0}, we first need to integrate out 
$\{ T_{{\rm u},\alpha}|\alpha=2,\cdots,t \}$. Applying Proposition \ref{matrixintegral2} gets to the fact that the result does not depend on $G_1$. When dealing with the terms \eqref{diagonal1 z0equal0}, again we need to integrate out $\{ \widetilde{T}_{{\rm d},\alpha}|\alpha=2,\cdots,t \}$. We then get a similar result in form with \eqref{diagonal simplfy}.

%
%
%
 \hspace*{\fill}

\hspace*{\fill}

 \noindent{\bf Acknowledgements}  
 

 This  work 
 was   supported by  the National Natural Science Foundation of China \# 12371157 and \#12090012.

 
%
%
%

   \appendix
 \section{
 Properties on matrices and integrals} \label{Appendix}
 
 The  following well-known property for tensor product (see e.g. \cite[eqn(6)]{HS81}).
\begin{proposition}\label{tensorproperty}
For  a $p\times q$  matrix  A and an $m\times n$ matrix   B,  there exist  permutation matrices $\mathbb{I}_{p,m}$,  which only depend on  $p, m$ and satisfy  $\mathbb{I}_{p,m}^{-1}=\mathbb{I}_{m,p}=\mathbb{I}_{p,m}^t$, such that 

\begin{small}
\begin{equation}\label{tensorpropertyequation}
\mathbb{I}_{p,m}\left( A\otimes B \right)\mathbb{I}_{q,n}^{-1}=B\otimes A.
\end{equation}
\end{small}
  \end{proposition}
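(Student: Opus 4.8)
The plan is to realize $\mathbb{I}_{p,m}$ concretely as the matrix of the canonical ``swap'' isomorphism between tensor-product spaces and then to verify \eqref{tensorpropertyequation} by evaluating both sides on elementary tensors. First I would order the standard basis of $\mathbb{C}^{p}\otimes\mathbb{C}^{m}\cong\mathbb{C}^{pm}$ lexicographically, consistently with the block convention for the Kronecker product fixed in Section \ref{sectnotation}, and write it as $\{e_{i}\otimes f_{j}\}$. Define the linear map $\sigma_{p,m}\colon\mathbb{C}^{p}\otimes\mathbb{C}^{m}\to\mathbb{C}^{m}\otimes\mathbb{C}^{p}$ on basis vectors by $\sigma_{p,m}(e_{i}\otimes f_{j})=f_{j}\otimes e_{i}$ and extend by linearity; since $\sigma_{p,m}$ merely permutes a basis, its matrix $\mathbb{I}_{p,m}$ relative to the fixed orderings is a permutation matrix depending only on $p$ and $m$.

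Next I would dispatch the elementary identities: composing $\sigma_{m,p}\circ\sigma_{p,m}$ returns $e_{i}\otimes f_{j}$ to itself, so $\mathbb{I}_{m,p}\mathbb{I}_{p,m}=\mathbb{I}_{pm}$, i.e.\ $\mathbb{I}_{p,m}^{-1}=\mathbb{I}_{m,p}$, and a permutation matrix is orthogonal, so $\mathbb{I}_{p,m}^{-1}=\mathbb{I}_{p,m}^{t}$. For the main equality, using that $A\otimes B$ represents the map $u\otimes v\mapsto(Au)\otimes(Bv)$ from $\mathbb{C}^{q}\otimes\mathbb{C}^{n}$ to $\mathbb{C}^{p}\otimes\mathbb{C}^{m}$, one has on simple tensors
\begin{equation*}
\sigma_{p,m}\big((Au)\otimes(Bv)\big)=(Bv)\otimes(Au)=(B\otimes A)(v\otimes u)=(B\otimes A)\,\sigma_{q,n}(u\otimes v).
\end{equation*}
Since elementary tensors span $\mathbb{C}^{q}\otimes\mathbb{C}^{n}$, this gives the operator identity $\mathbb{I}_{p,m}(A\otimes B)=(B\otimes A)\mathbb{I}_{q,n}$, and right-multiplication by $\mathbb{I}_{q,n}^{-1}=\mathbb{I}_{n,q}$ yields \eqref{tensorpropertyequation}.

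There is no genuine obstacle here; the only point requiring care is to pin down the lexicographic basis orderings so that the concrete $pm\times pm$ permutation matrix written for $\sigma_{p,m}$ is really compatible with the block form of $A\otimes B$ used throughout Section \ref{sectnotation}. An equally short alternative is a direct entry comparison: the $((i,k),(j,l))$-entry of $A\otimes B$ equals $a_{ij}b_{kl}$, which is the $((k,i),(l,j))$-entry of $B\otimes A$, and conjugation by the commutation matrix is exactly the reindexing $(i,k)\mapsto(k,i)$ on row and column labels.
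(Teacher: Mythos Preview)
Your proof is correct and complete. Note, however, that the paper does not actually prove this proposition: it simply states it as a well-known fact and cites \cite[eqn~(6)]{HS81}. Your argument via the swap isomorphism $\sigma_{p,m}$ on tensor products (equivalently, the entrywise reindexing you mention at the end) is the standard derivation of the commutation-matrix identity, so nothing more is needed here.
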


 An  integral  over non-positive definite matrices  will be useful, for proof see \cite[Proposition A.2]{LZ23}.
\begin{proposition}\label{matrixintegral1}
For  a  non-positive definite Hermitian matrix  $H_n=\left[ h_{i,j} \right]_{i,j=1}^n$,  given an integer  $ r_0\ge n$,  then 
\begin{small}
\begin{equation}\label{matrixintegral1equ}
\int_{H_n\leq 0} \left( \det(H_n) \right)^{r_0-n} \prod_{i<j}^n{\rm d}h_{i,j}
=
 \pi^{\frac{n(n-1)}{2}}  ( (r_0-1)! )^{-n} \prod_{k=1}^n   (r_0-k)!  \prod_{j=1}^n  (h_{j,j})^{r_0-1}.
\end{equation}
\end{small}
\end{proposition}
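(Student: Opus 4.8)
The plan is to prove the identity by induction on $n$, using a Schur-complement decomposition that peels off the last row and column of $H_n$ one diagonal entry at a time; this is the natural device here because the diagonal entries $h_{1,1},\dots,h_{n,n}$ are kept fixed while only the strictly upper-triangular entries are integrated. First I would record that the domain of integration is compact: from the $2\times 2$ principal minors of a negative semidefinite matrix one gets $|h_{i,j}|^2\leq h_{i,i}h_{j,j}$ for all $i<j$, so the integral of the polynomial integrand is finite and Fubini applies; moreover the locus where the leading $(n-1)\times(n-1)$ block $H_{n-1}$ is singular has measure zero and may be discarded, so that $H_{n-1}$ can be assumed negative definite throughout.

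Writing $H_n=\left[\begin{smallmatrix}H_{n-1}&v\\ v^*&h_{n,n}\end{smallmatrix}\right]$ with $v=(h_{1,n},\dots,h_{n-1,n})^t\in\mathbb{C}^{n-1}$, the integration variables split into the strictly upper part of $H_{n-1}$ and the vector $v$, while $\det H_n=\det H_{n-1}\,(h_{n,n}-v^*H_{n-1}^{-1}v)$ and (with $H_{n-1}<0$) the condition $H_n\leq 0$ is equivalent to $H_{n-1}<0$ together with $h_{n,n}-v^*H_{n-1}^{-1}v\leq 0$; hence the integral becomes iterated. For the inner integral over $v$ I would substitute $w=(-H_{n-1})^{-1/2}v$, which has real Jacobian $\mathrm{d}v=\det(-H_{n-1})\,\mathrm{d}w$ on $\mathbb{C}^{n-1}$, so that $v^*H_{n-1}^{-1}v=-\|w\|^2$, the constraint becomes $\|w\|^2\leq|h_{n,n}|$, and $(h_{n,n}-v^*H_{n-1}^{-1}v)^{r_0-n}=(-1)^{r_0-n}(|h_{n,n}|-\|w\|^2)^{r_0-n}$. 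The radial reduction $\int_{\mathbb{C}^{m}}F(\|w\|^2)\,\mathrm{d}w=\frac{\pi^{m}}{(m-1)!}\int_0^\infty F(t)t^{m-1}\,\mathrm{d}t$ with $m=n-1$, followed by a Beta integral, evaluates the inner integral to $(-1)^{r_0-n}\pi^{n-1}\frac{(r_0-n)!}{(r_0-1)!}|h_{n,n}|^{r_0-1}\det(-H_{n-1})$. Absorbing $\det(-H_{n-1})=(-1)^{n-1}\det H_{n-1}$ into the remaining integrand turns $(\det H_{n-1})^{r_0-n}$ into $(\det H_{n-1})^{r_0-(n-1)}$, which is exactly the integrand of the $(n-1)$-dimensional instance, yielding the recursion
\begin{equation*}
I_n(r_0)=(-1)^{r_0-1}\pi^{n-1}\frac{(r_0-n)!}{(r_0-1)!}\,|h_{n,n}|^{r_0-1}\,I_{n-1}(r_0),\qquad I_1(r_0)=(h_{1,1})^{r_0-1},
\end{equation*}
where $I_k(r_0)$ denotes the left-hand side of \eqref{matrixintegral1equ} for a $k\times k$ matrix.

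Finally I would unwind the recursion from $n$ down to $1$ and match the three groups of factors against the claimed right-hand side: the powers of $\pi$ telescope to $\sum_{m=1}^{n-1}m=\tfrac{n(n-1)}{2}$; the factorial coefficients multiply to $\prod_{m=2}^n\frac{(r_0-m)!}{(r_0-1)!}$, which equals $\prod_{k=1}^n(r_0-k)!\big/((r_0-1)!)^n$ after the one-line rearrangement $(r_0-1)!\cdot\prod_{m=2}^n(r_0-m)!=\prod_{k=1}^n(r_0-k)!$; and the $n$ sign factors $(-1)^{r_0-1}$ together with $\prod_{m=1}^n|h_{m,m}|^{r_0-1}$ recombine into $\prod_{k=1}^n(h_{k,k})^{r_0-1}$ since each $h_{k,k}\leq 0$. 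There is no serious obstacle here — the substance is just the change of variables and the radial Beta integral — so the only point that genuinely needs care is the bookkeeping of the signs produced by $\det(-H_{n-1})$ versus $\det H_{n-1}$ and by the power $(h_{n,n}-v^*H_{n-1}^{-1}v)^{r_0-n}$ as they propagate through the induction, together with the routine check that the degenerate set $\{\det H_{n-1}=0\}$ and the cases $h_{j,j}=0$ contribute nothing.
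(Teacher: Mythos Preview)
Your induction via the Schur complement is correct and all the bookkeeping checks out: the Jacobian $\mathrm{d}v=\det(-H_{n-1})\,\mathrm{d}w$, the radial reduction with $m=n-1$, the Beta integral, and the sign accounting $(-1)^{(r_0-1)(n-1)}\cdot(-1)^{(r_0-1)(n-1)}=1$ all combine to give the stated formula. The paper does not actually prove this proposition here; it simply cites \cite[Proposition~A.2]{LZ23}, so there is nothing to compare against in the present text, and your self-contained argument is a clean and standard way to establish the identity.
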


We also need a Gaussian integral over several strictly upper triangular matrices.
\begin{proposition}\label{matrixintegral2}
Let T, $\{ G_{\alpha}|\alpha=2,\cdots,t \}$ be strictly upper triangular matrices of size $n\times n$. Assume that $\sum_{\alpha=1}^t\frac{\tau c_{\alpha}}{f_{\alpha}}=1$, with $f_{\alpha}=|z_0-a_{\alpha}|^2$ and $P_0=\sum_{\alpha=1}^t\frac{c_{\alpha}(a_{\alpha}-z_0)}{f_{\alpha}^2}$, then
\begin{small}
\begin{equation}\label{L}
\begin{aligned}
&\int \cdots \int \exp\Big\{
-\frac{f_1^2}{\tau^2 c_1}{\rm Tr}\big(
T+\sum_{\alpha=2}^t \sqrt{\frac{\tau c_{\alpha}}{f_{\alpha}}} 
G_{\alpha}
\big)
\big(
T+\sum_{\alpha=2}^t \sqrt{\frac{\tau c_{\alpha}}{f_{\alpha}}} 
G_{\alpha}
\big)^*-\sum_{\alpha=2}^t\frac{f_{\alpha}}{\tau}{\rm Tr}\big(
G_{\alpha}G_{\alpha}^*
\big)           \\
&+\frac{1}{\tau}\sum_{1\leq i<j\leq n}\Big|
-(a_1-z_0)T_{i,j}+
\sum_{\alpha=2}^t
\sqrt{\frac{\tau c_{\alpha}}{f_{\alpha}}} 
(a_{\alpha}-a_1)(G_{\alpha})_{i,j}
\Big|^2\Big\}\prod_{\alpha=2}^t {\rm d}G_{\alpha}
\\&=\tau^{\frac{n(n-1)}{2}(t-2)}\Big(
\frac{f_1}{c_1}\prod_{\alpha=1}^t f_{\alpha}
|P_0|^2
\Big)^{-\frac{n(n-1)}{2}}
\pi^{\frac{n(n-1)(t-1)}{2}}.
\end{aligned}
\end{equation}
\end{small}
\end{proposition}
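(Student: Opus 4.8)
\emph{Proof strategy for Proposition \ref{matrixintegral2}.} The plan is to reduce the matrix integral to a product of identical scalar Gaussian integrals, evaluate one of them by completing the square, and then isolate the two algebraic facts that make the answer come out as stated: that the Schur complement of the relevant quadratic form vanishes (so the dependence on the fixed matrix $T$ disappears), and that the remaining Gaussian determinant takes the asserted value. Throughout, write $u_\alpha=\sqrt{\tau c_\alpha/f_\alpha}$ and $\lambda_\alpha=f_\alpha/\tau$ for $\alpha=1,\dots,t$.

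\emph{Step 1 (factorization over index pairs).} Every matrix $G_\alpha$ and $T$ occurring in \eqref{L} is strictly upper triangular, so its only free entries are $(G_\alpha)_{ij}$ with $1\le i<j\le n$, and for any two strictly upper triangular matrices $A,B$ one has ${\rm Tr}(AB^*)=\sum_{i<j}A_{ij}\overline{B_{ij}}$. Hence every term in the exponent of \eqref{L} splits as a sum over the pairs $i<j$ of a quadratic expression in the numbers $\{(G_\alpha)_{ij}\}_{\alpha=2}^t$ with $T_{ij}$ as a parameter, and the integral factorizes as $\prod_{1\le i<j\le n}J$, where, with $s=T_{ij}$ and $x_\alpha=(G_\alpha)_{ij}$,
\[
J=\int_{\mathbb{C}^{t-1}}\exp\Big\{-\tfrac{\lambda_1}{u_1^2}\big|s+\sum_{\alpha\ge2}u_\alpha x_\alpha\big|^2-\sum_{\alpha\ge2}\lambda_\alpha|x_\alpha|^2+\tfrac1\tau\big|(z_0-a_1)s+\sum_{\alpha\ge2}u_\alpha(a_\alpha-a_1)x_\alpha\big|^2\Big\}\prod_{\alpha=2}^t{\rm d}x_\alpha .
\]
It therefore suffices to prove $J=\pi^{t-1}\tau^{t-2}c_1\big(f_1|P_0|^2\prod_{\alpha=1}^t f_\alpha\big)^{-1}$, which raised to the power $n(n-1)/2$ is exactly the right side of \eqref{L}.

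\emph{Step 2 (the Gaussian integral).} Write the exponent of $J$ as $-\big(\mathbf{x}^*M\mathbf{x}-\mathbf{b}(s)^*\mathbf{x}-\mathbf{x}^*\mathbf{b}(s)+\gamma(s)\big)$ with $M$ the $(t-1)\times(t-1)$ Hermitian matrix of the quadratic part in $\mathbf{x}=(x_2,\dots,x_t)^t$; positive definiteness of $M$ is inherited from the convergence of the integral representation in Proposition \ref{foranlysis} (and can also be checked directly once $\det M$ is computed). Completing the square gives $J=\pi^{t-1}(\det M)^{-1}\exp\{-(\gamma(s)-\mathbf{b}(s)^*M^{-1}\mathbf{b}(s))\}$, and since $\mathbf{b}(s)$ and $\gamma(s)$ are just the $s$--row data of the full $t\times t$ Hermitian form $\widetilde M$ in $(s,x_2,\dots,x_t)$, this exponent equals $|s|^2$ times the Schur complement $\widetilde M/M=\det\widetilde M/\det M$. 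So everything reduces to two computations: $\det\widetilde M=0$ and the value of $\det M$.

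\emph{Step 3 (the crux: $\det\widetilde M=0$).} This is where the boundary hypothesis $\sum_{\alpha=1}^t\tau c_\alpha/f_\alpha=1$, equivalently $\sum_\alpha c_\alpha/f_\alpha=1/\tau$, and the assumption $P_0\neq0$ are used. I would exhibit the explicit null vector $\mathbf{y}_0=(1,\xi_2,\dots,\xi_t)^t$ with $\xi_\alpha=-u_\alpha\,\overline{a_\alpha-z_0}\,\big(\tau\,\overline{P_0}\,f_\alpha\big)^{-1}$ for $\alpha=2,\dots,t$, and verify $\widetilde M\mathbf{y}_0=0$ component by component; the verification collapses after substituting the identity $(a_\alpha-a_1)\overline{a_\alpha-z_0}=f_\alpha+(z_0-a_1)\overline{a_\alpha-z_0}$, summing against $c_\alpha/f_\alpha^2$, and using the edge condition together with the definitions $P_0=\sum_\alpha c_\alpha(a_\alpha-z_0)/f_\alpha^2$ and $P_1=\sum_\alpha c_\alpha/f_\alpha^2$ in \eqref{parameter}--\eqref{parameter2} (it is precisely $P_0\neq0$ that makes $\mathbf{y}_0$ well defined). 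Consequently $\widetilde M/M=0$, so $J$ does not depend on $s$ — this is the mechanism behind the ``independent of $Q_0$'' statement used in the main proof — and $J=\pi^{t-1}/\det M$.

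\emph{Step 4 (computing $\det M$ and concluding).} The matrix $M$ is a rank--two perturbation of the positive diagonal matrix ${\rm diag}(\lambda_2,\dots,\lambda_t)$, namely $M={\rm diag}(\lambda_\alpha)_{\alpha=2}^t+\tfrac{\lambda_1}{u_1^2}\mathbf{u}'(\mathbf{u}')^*-\tfrac1\tau\overline{\mathbf{p}'}\,(\mathbf{p}')^{t}$ with $\mathbf{u}'=(u_\alpha)_{\alpha\ge2}^t$ and $\mathbf{p}'=(u_\alpha(a_\alpha-a_1))_{\alpha\ge2}^t$. Applying the matrix--determinant lemma reduces $\det M$ to $\det{\rm diag}(\lambda_\alpha)_{\alpha\ge2}$ times a $2\times2$ determinant assembled from the inner products $(\mathbf{u}')^*D'^{-1}\mathbf{u}'$, $(\mathbf{u}')^*D'^{-1}\overline{\mathbf{p}'}$ and $\overline{\mathbf{p}'}^{\,*}D'^{-1}\overline{\mathbf{p}'}$ (with $D'={\rm diag}(\lambda_\alpha)_{\alpha\ge2}$); using once more the partial--sum identities of Step 3 — which now produce $P_1$ and $\overline{P_0}$ out of the tails $\sum_{\alpha\ge2}$ after stripping off the $\alpha=1$ term — this evaluates to $\det M=f_1|P_0|^2\tau^{-(t-2)}c_1^{-1}\prod_{\alpha=1}^t f_\alpha$. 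Hence $J=\pi^{t-1}\tau^{t-2}c_1\big(f_1|P_0|^2\prod_{\alpha=1}^t f_\alpha\big)^{-1}$, and multiplying over the $n(n-1)/2$ pairs gives \eqref{L}. The main obstacle is Step 3 together with the companion determinant evaluation in Step 4: both rest on the same family of partial--sum identities and on carefully tracking the index $\alpha=1$ that has been singled out (it is the ``$T$--slot''), which is exactly where the edge condition $\sum_\alpha\tau c_\alpha/f_\alpha=1$ and the regular--edge assumption $P_0\neq0$ enter and where the sign bookkeeping must be done with care.
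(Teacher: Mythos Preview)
Your proposal is correct and shares the same skeleton as the paper's proof: factor the integral over index pairs $(i,j)$, reduce to a scalar complex Gaussian in $(t-1)$ variables, complete the square, and establish the two algebraic facts that (i) the $T$-dependence vanishes and (ii) the Gaussian determinant equals $\tau^{-(t-2)}\,\frac{f_1}{c_1}\,|P_0|^2\prod_\alpha f_\alpha$. Where you differ is in the execution of (i) and (ii). The paper writes the $(t-1)\times(t-1)$ matrix $A$ and the linear data $\vec d$ explicitly, produces $A^{-1}$ entrywise via a rank-one splitting, and then verifies $\vec d\,A^{-1}\vec d^{\,*}=f_1^2/c_1-f_1$ and $\det A=(f_1/c_1)|P_0|^2\prod_\alpha f_\alpha$ by direct substitution of the edge condition $\sum_\alpha c_\alpha/f_\alpha=1/\tau$. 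You instead cast (i) as the vanishing of the Schur complement $\det\widetilde M/\det M$, proved by exhibiting a null vector of the full $t\times t$ form $\widetilde M$, and handle (ii) by the matrix--determinant lemma applied to a rank-two perturbation of a diagonal. Your route is more structural and spares the explicit inversion; the paper's is more hands-on and does not require guessing the null vector. Both rest on the same partial-sum identities in $c_\alpha,f_\alpha,a_\alpha-z_0$, and both implicitly use $P_0\neq 0$ (your null vector needs it; in the paper it is what keeps $\det A\neq 0$ so that the Gaussian converges).
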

\begin{proof}
By letting $ a_{\alpha}\rightarrow \sqrt{\tau}a_{\alpha} $ and $ z_0\rightarrow \sqrt{\tau}z_0 $, without loss of generality, it suffices to prove the conclusion with $\tau=1.$

Denote the exponential term on the left hand side of \eqref{L} as $L(G_{\alpha},T)$. From basic computations, we can rewrite $L(G_{\alpha},T)$ as the polynomials of $(G_{\alpha})_{i,j}$ for $\alpha=2,\cdots,t,\ 1\leq i<j\leq n$,
\begin{small}
\begin{equation*}
\begin{aligned}
L(G_{\alpha},T)=\sum_{1\leq i<j\leq n}\Big(
-(\vec{G}_{i,j}-T_{i,j}\vec{d}A^{-1})A
(\vec{G}_{i,j}-T_{i,j}\vec{d}A^{-1})^*
+|T_{i,j}|^2\big(\vec{d}A^{-1}\vec{d}^*-\frac{f_1^2}{c_1}
+f_1\big)
\Big),
\end{aligned}
\end{equation*}
\end{small}
where
\begin{small}
\begin{equation*}
\begin{aligned}
&\vec{G}_{i,j}=\big( (G_{2})_{i,j},\cdots,(G_{t})_{i,j} \big),\quad
A=[A_{\alpha,\beta}]_{\alpha,\beta=2}^t,\quad
\vec{d}=\big( d_2,\cdots,d_t \big),       \\
&A_{\alpha,\beta}=\Big(\frac{f_1^2}{c_1}
-(a_{\alpha}-a_1)\overline{a_{\beta}-a_1}\Big)
\sqrt{\frac{c_{\alpha}c_{\beta}}{f_{\alpha}f_{\beta}}}+f_{\alpha}\delta_{\alpha,\beta},\quad
\alpha,\beta=2,\cdots,t,     \\
&d_{\alpha}=-\sqrt{\frac{c_{\alpha}}{f_{\alpha}}}
\Big( (a_1-z_0)\overline{a_{\alpha}-a_1}+\frac{f_1^2}{c_1} \Big),
\quad
\alpha=2,\cdots,t,
\end{aligned}
\end{equation*}
\end{small}
Calculate the gaussian integrals according to row vectors $\vec{G}_{i,j}$ and we obtain
\begin{small}
\begin{equation*}
\int e^{L(G_{\alpha},T)}\prod_{\alpha=2}^t {\rm d}G_{\alpha}=
\exp\Big\{
\big(\vec{d}A^{-1}\vec{d}^*-\frac{f_1^2}{c_1}
+f_1\big){\rm Tr}(TT^*)
\Big\}
\big( \det(A) \big)^{-\frac{n(n-1)}{2}}\pi^{\frac{n(n-1)}{2}(t-1)},
\end{equation*}
\end{small}
Later we will prove
\begin{small}
\begin{equation}\label{matrixintegral2 ke1}
\det(A)=\frac{f_1}{c_1}\Big(
\prod_{\alpha=1}^t f_{\alpha}
\Big)|P_0|^2
\end{equation}
\end{small}
and
\begin{small}
\begin{equation}\label{matrixintegral2 ke2}
\vec{d}A^{-1}\vec{d}^*-\frac{f_1^2}{c_1}
+f_1=0,
\end{equation}
\end{small}
these two facts complete the proof.

{\bf To prove\eqref{matrixintegral2 ke1}}, first rewrite A as
\begin{small}
\begin{equation*}
A={\rm diag}\Big(\sqrt{\frac{c_{\alpha}}{f_{\alpha}}}\Big)_{\alpha=2}^t
\Big( \frac{f_1^2}{c_1}\vec{1}^t\vec{1}+\widehat{A} \Big)
{\rm diag}\Big(\sqrt{\frac{c_{\beta}}{f_{\beta}}}\Big)_{\beta=2}^t,
\end{equation*}
\end{small}
where
\begin{small}
$$  
\vec{d}A^{-1}\vec{d}^*-\frac{f_1^2}{c_1}+f_1
 =\frac{f_1^2}{K_1 c_1}\Big(
\sum_{\alpha=1}^t
\frac{c_{\alpha}}{f_{\alpha}}-1
\Big)=0,
$$
\end{small}
Then 
\begin{small}
\begin{equation}\label{A rewrite 5}
\det(A)=\det\big( \widehat{A} \big)
\big( 1+\frac{f_1^2}{c_1}\vec{1}\widehat{A}^{-1}\vec{1}^t \big)
\prod_{\alpha=2}^t\frac{c_{\alpha}}{f_{\alpha}}
\end{equation}
\end{small}
From basic computations, for $\alpha,\beta=2,\cdots,t$,
\begin{small}
\begin{equation}\label{Ahat inverse}
\begin{aligned}
&\big(\widehat{A}^{-1}\big)_{\alpha,\beta}
=\frac{c_{\alpha}}{f_{\alpha}^2}\delta_{\alpha,\beta}+
\frac{c_{\alpha}c_{\beta}}{f_{\alpha}^2f_{\beta}^2}
(a_{\alpha}-a_1)\overline{a_{\beta}-a_1}
\Big( 1-\sum_{\alpha=2}^t
\frac{c_{\alpha}}{f_{\alpha}^2}|a_{\alpha}-a_1|^2 \Big)^{-1}
,\\&
\det\big( \widehat{A} \big)=\prod_{\alpha=2}^t
\frac{f_{\alpha}^2}{c_{\alpha}}
\Big( 1-\sum_{\alpha=2}^t
\frac{c_{\alpha}}{f_{\alpha}^2}|a_{\alpha}-a_1|^2 \Big).
\end{aligned}
\end{equation}
\end{small}
Recall the definition of $P_0$ and $P_1$ in \eqref{parameter} and \eqref{parameter2},
\begin{small}
\begin{equation*}
\Big|
\sum_{\alpha=2}^t
\frac{c_{\alpha}}{f_{\alpha}^2}(a_{\alpha}-a_1)
\Big|^2
=\Big|
\sum_{\alpha=2}^t
\frac{c_{\alpha}}{f_{\alpha}^2}\big((a_{\alpha}-z_0)
-(a_{1}-z_0)
\big)
\Big|^2=
|P_0-(a_1-z_0)P_1|^2.
\end{equation*}
\end{small}
Also from $P_{00}(z_0)=\sum_{\alpha=1}^t
\frac{c_{\alpha}}{f_{\alpha}}=1,$ 
\begin{small}
\begin{equation*}
\begin{aligned}
\sum_{\alpha=2}^t
\frac{c_{\alpha}}
{f_{\alpha}^2}|a_{\alpha}-a_1|^2&=
\sum_{\alpha=1}^t
\frac{c_{\alpha}}{f_{\alpha}^2}\big|(a_{\alpha}-z_0)
-(a_{1}-z_0)
\big|^2     \\
&=1-\overline{a_1-z_0}P_0
-(a_1-z_0)\overline{P}_0+|a_1-z_0|^2P_1.
\end{aligned}
\end{equation*}
\end{small}
so from \eqref{Ahat inverse} and basic computations we obtain
\begin{small}
\begin{equation*}
\big( 1+\frac{f_1^2}{c_1}\vec{1}\widehat{A}^{-1}\vec{1}^t \big)
\Big( 1-\sum_{\alpha=2}^t
\frac{c_{\alpha}}{f_{\alpha}^2}|a_{\alpha}-a_1|^2 \Big)
=\frac{f_1^2}{c_1}|P_0|^2
\end{equation*}
\end{small}
Hence,
Combining \eqref{A rewrite 5} and \eqref{Ahat inverse} we obtain \eqref{matrixintegral2 ke1}.

{\bf To prove \eqref{matrixintegral2 ke2}}, first we have
\begin{small}
\begin{equation}\label{Ainverse}
\big( A^{-1} \big)_{\alpha,\beta}=
\sqrt{\frac{f_{\alpha}f_{\beta}}{c_{\alpha}c_{\beta}}}
\frac{c_{\alpha}c_{\beta}}{f_{\alpha}^2 f_{\beta}^2 K_1}
\big(\frac{f_{\alpha}^2}{c_{\alpha}}\delta_{\alpha,\beta}K_1
+K_{\alpha,\beta}\big),
\end{equation}
\end{small}
with 
\begin{small}
\begin{equation}\label{K1}
K_1=1+\frac{f_1^2}{c_1}G_2
-G_1\big(1+\frac{f_1^2}{c_1}G_2\big)+\frac{f_1^2}{c_1}|G_3|^2,
\end{equation}
\end{small}
and 
\begin{small}
\begin{equation}\label{Kalphabeta}
K_{\alpha,\beta}=(a_{\alpha}-a_1)\overline{a_{\beta}-a_1}
-\frac{f_1^2}{c_1}+\frac{f_1^2}{c_1}G_1-\frac{f_1^2}{c_1}
\overline{a_{\beta}-a_1}G_3
-\frac{f_1^2}{c_1}(a_{\alpha}-a_1)\overline{G}_3
+\frac{f_1^2}{c_1}(a_{\alpha}-a_1)\overline{a_{\beta}-a_1}G_2,
\end{equation}
\end{small}
where
\begin{small}
\begin{equation*}
G_1=\sum_{\alpha=2}^t
\frac{c_{\alpha}}{f_{\alpha}^2}|a_{\alpha}-a_1|^2,\quad
G_2=\sum_{\alpha=2}^t
\frac{c_{\alpha}}{f_{\alpha}^2},\quad
G_3=\sum_{\alpha=2}^t
\frac{c_{\alpha}}{f_{\alpha}^2}(a_{\alpha}-a_1).
\end{equation*}
\end{small}
\eqref{Ainverse} can be seen by verifying the equality
 \begin{small}
 $$
 \sum_{\gamma=2}^t A_{\alpha,\gamma}
\big(
A^{-1}
\big)_{\gamma,\beta}=\delta_{\alpha,\beta}
 $$
 \end{small}
directly. To compute $ \vec{d}A^{-1}\vec{d}^*=
\sum_{\alpha,\beta=2}^t d_{\alpha}
\big( A^{-1} \big)_{\alpha,\beta}
d_{\beta} $, we first expand the right hand side according to the terms of  $K_1$ and $K_{\alpha,\beta}$ in \eqref{K1} and \eqref{Kalphabeta}, then for each term we compute the sum $\sum_{\alpha,\beta=2}^t$ to the form represented by $G_1$, $G_2$ and $G_3$, for example, for the first term $(a_{\alpha}-a_1)\overline{a_{\beta}-a_1}$ in $K_{\alpha,\beta}$(cf \eqref{Kalphabeta}), we have
\begin{small}
\begin{equation*}
\begin{aligned}
&\sum_{\alpha,\beta=2}^t 
\frac{c_{\alpha}c_{\beta}}{f_{\alpha}^2f_{\beta}^2}
(a_{\alpha}-a_1)\overline{a_{\beta}-a_1}
\big( (a_1-z_0)\overline{a_{\alpha}-a_1}+\frac{f_1^2}{c_1} \big)
\big( \overline{a_1-z_0}(a_{\beta}-a_1)+\frac{f_1^2}{c_1} \big)
      \\
&=\big( (a_1-z_0)G_1+\frac{f_1^2}{c_1}G_3 \big)
\big( \overline{a_1-z_0}G_1+D\frac{f_1^2}{c_1}\overline{G}_3 \big),
\end{aligned}
\end{equation*}
\end{small}
Following this process, after basic calculations we obtain
\begin{small}
\begin{equation}\label{dAinvers rewrite}
\begin{aligned}
\vec{d}A^{-1}\vec{d}^*-\frac{f_1^2}{c_1}+f_1
 =\frac{f_1^2}{K_1 c_1}\Big(
\frac{c_1}{f_1}+f_1G_2-1+G_1+(a_1-z_0)\overline{G}_3+\overline{a_1-z_0}G_3
\Big).
\end{aligned}
\end{equation}
\end{small}
Now rewrite $a_{\alpha}-a_1$ as $(a_{\alpha}-z_0)-(a_1-z_0) $, then we have
\begin{small}
\begin{equation*}
\begin{aligned}
&G_1=\sum_{\alpha=2}^t
\frac{c_{\alpha}}{f_{\alpha}}-\overline{a_1-z_0}\sum_{\alpha=2}^t
\frac{c_{\alpha}}{f_{\alpha}^2}(a_{\alpha}-z_0)
-(a_1-z_0)\sum_{\alpha=2}^t
\frac{c_{\alpha}}{f_{\alpha}^2}\overline{a_{\alpha}-z_0}
+f_1\sum_{\alpha=2}^t
\frac{c_{\alpha}}{f_{\alpha}^2}
\\
& G_2=\sum_{\alpha=2}^t
\frac{c_{\alpha}}{f_{\alpha}^2},\quad
G_3=\sum_{\alpha=2}^t
\frac{c_{\alpha}}{f_{\alpha}^2}(a_{\alpha}-z_0)
-(a_1-z_0)\sum_{\alpha=2}^t
\frac{c_{\alpha}}{f_{\alpha}^2}.
\end{aligned}
\end{equation*} 
\end{small}
Substituting the above into \eqref{dAinvers rewrite}, we get that
\begin{small}
$$  
\vec{d}A^{-1}\vec{d}^*-\frac{f_1^2}{c_1}+f_1
 =\frac{f_1^2}{K_1 c_1}\Big(
\sum_{\alpha=1}^t
\frac{c_{\alpha}}{f_{\alpha}}-1
\Big)=0,
$$
\end{small}
and finally we get \eqref{matrixintegral2 ke2}.
\end{proof}


\begin{thebibliography}{10}
\addtolength{\itemsep}{-1. em} 
\setlength{\itemsep}{-4pt} 

\bibitem{And}
Andr\'{e}ief, C. Note sur une relation entre les int\'{e}grales d\'{e}finies des produits des fonctions. M\'{e}m. Soc. Sci. Phys. Nat. Bordeaux 3  (1886), no.2,  1-14.

 \bibitem{Ba}
Bai, Z.D.   Circular law.
\newblock {  Ann.  Probab.} 25 (1997), no.1, 494-529.

  \bibitem{BBP}
Baik,  J.,  Ben~Arous, G.,   P{\'e}ch{\'e}, S.
Phase transition of the largest eigenvalue for nonnull complex sample
  covariance matrices.
\newblock { Ann. Probab.} 33 (2005), 1643--1697.

\bibitem{BZ} Basak, A.,   Zeitouni, O. Outliers of random perturbations of Toeplitz matrices with finite symbols. Probab. Theory Relat. Fields 178 (2020), 771–826. 

\bibitem{BYZ}  Belinschi, S., Yin, Z.,   Zhong, P.    The Brown measure of a sum of two free random variables, one of which is triangular elliptic. Preprint arXiv: 2209.11823 
 
\bibitem{BR} Benaych-Georges,  F.,  Rochet, J. Outliers in the Single Ring Theorem. 
Probab. Theory Related Fields  165 (2016), 313--363. 

\bibitem{BC16}
Bordenave, C.,   Capitaine, M.  Outlier eigenvalues for deformed I.I.D. random matrices.
Comm. Pure Appl. Math. 69 (2016), no. 11, 2131-2194.

\bibitem{BCC13}  Bordenave, C.,  Caputo, P., Chafa\"{i}, D. 
Spectrum of Markov Generators on Sparse Random Graphs
Comm. Pure Appl. Math. 67 (2014), no. 4,  621-669.

\bibitem{BC12}  Bordenave, C., Chafa\"{i}, D.   Around the circular law.  Probab. Surv. 9 (2012),  1-89.


\bibitem{BS} Borodin, A.,   Sinclair, C. D.  The Ginibre ensemble of real random matrices and its scaling limits. Commun. Math. Phys. 291(2009), 177-224.

    \bibitem{CES}
Cipolloni,  G.,   Erd\H{o}s, L.,  Schr\H{o}der, D.
Edge universality for non-Hermitian random matrices. Probab. Theory Relat. Fields 179 (2021), 1-28.
      

\bibitem{COW}        
Coston, N., O’Rourke, S.,  Wood, P.M.  Outliers in the spectrum for products of independent random matrices.   Ann. Inst. Henri Poincare Probab. Stat. 56 (2020), no.2, 1284-1320.

\bibitem{EJ}
 Erd\'{o}s, L.,   Ji, H.C. 
Density of Brown measure of free circular Brownian motion. Preprint   arXiv:2307.08626.



\bibitem{FH} Forrester, P.J., Honner, G. Exact statistical properties of the zeros of complex random polynomials.
J. Phys. A 32(1999), 2961-2981.
 

\bibitem{Gi}
Ginibre, J. 
  Statistical ensembles of complex,  quaternion and real matrices.
 J.  Math.  Phys. 6(1965), no.3, 440-449.  


\bibitem{Gir}
 Girko, V.L.
 Circular law. Theory Probab. Appl. 29(1985), no.4, 694-706.
 
 

\bibitem{GT}
 G{\"o}tze, F.,   Tikhomirov, A.
\newblock The circular law for random matrices.
\newblock { Ann.  Probab.} 38 (2010), no.4, 1444-1491.


\bibitem{Gr}
Grela, J.  Diffusion method in random matrix theory.  J. Phys. A: Math. Theor. {49} (2016), 015201 (18 pages).

\bibitem{HC}
Harish-Chandra, Differential operators on a semisimple Lie algebra.  Amer. J. Math. 79 (1957),  87-120.

\bibitem{HS81}
Henderson,  H.V.,   Searle, S.R. The vec-permutation matrix, the vec operator and Kronecker products: a review.  Linear and Multilinear Algebra, Vol. 9(1981), 271-288.

\bibitem{HZ23}
 Ho,  C.-W., Zhong, P.  Brown measures of free circular and multiplicative Brownian motions with self-adjoint and unitary initial conditions. J. Eur. Math. Soc. 25 (2023), no. 6,  2163-2227.

\bibitem{IZ}
 Itzykson,  C.,  Zuber,  J.B.  The planar approximation II.  J. Math. Phys. 21 (1980), 411–423.

\bibitem{Ka}
Kanzieper,  E.  Eigenvalue correlations in non-Hermitean symplectic random matrices.   J. Phys. A 35 (2002),  6631-6644.

\bibitem{LZ22} Liu, D.-Z., Zhang, L. Phase transition of eigenvalues in deformed Ginibre ensembles. Preprint arXiv: 2204.13171v2.

\bibitem{LZ23} Liu, D.-Z., Zhang, L. Critical edge statistics for deformed GinUEs. Preprint arXiv: 2311.13227.

\bibitem{MO23} Maltsev, A.,   Osman M.
Bulk Universality for Complex non-Hermitian Matrices with Independent and Identically Distributed Entries.  Preprint arXiv:2310.11429v4. 

\bibitem{Me} 
 Mehta, M.L. 
{\em Random Matrices}. 
\newblock Elsevier, Amsterdam, 3rd edition, 2004.

\bibitem{OR}   O'{R}ourke,  S.,   Renfrew, D.  Low rank perturbations of large elliptic random matrices.  Electron. J.
Probab. 19 (2014), no.43, 1–65.

\bibitem{Os}    Osman, M. 
Bulk Universality for Real Matrices with Independent and Identically Distributed Entries. Preprint arXiv:2402.04071.

\bibitem{PZ}
 Pan, G.,   Zhou, W.
 Circular law, extreme singular values and potential theory. J. Multivariate Anal. 101 (2010), no.3, 645-656.

\bibitem{Ta13}
Tao, T.  {Outliers in the spectrum of iid matrices with bounded rank perturbations}. Probab. Theory Relat. Fields 155 (2013), 231-263.

\bibitem{TV15}
Tao,  T., Vu, V.
\newblock Random matrices: universality of local spectral statistics of
  non-hermitian matrices.
  Ann. Probab. 43 (2015), no.2, 782--874.
  
\bibitem{TV10}
Tao,  T., Vu, V., Krishnapur, M. 
\newblock Random matrices: Universality of ESDS and the circular law.
\newblock { Ann.  Probab.} 38 (2010), no.5, 2023-2065.

\bibitem{Zh21} Zhong, P. 
Brown measure of the sum of an elliptic operator and a free random variable in a finite von Neumann algebra. 
Preprint arXiv:2108.09844.  













 














%
%
%
%





%
%
%
%
%
%
%
          
         
%
%





%







 


 
 
  



 

 


 

%
%




 




  
 

 


 



















\end{thebibliography}
\end{document}